

\documentclass{amsart}
\usepackage{amsmath}
\usepackage{amsfonts}
\usepackage{graphics}
\usepackage{epsfig}
\usepackage{amssymb}
\usepackage{amscd}
\usepackage[all]{xy}
\usepackage{latexsym}
\usepackage{graphicx}
\usepackage{multirow}
\usepackage{geometry}

\newtheorem{thm}{Theorem}[section]
\newtheorem{cor}[thm]{Corollary}
\newtheorem{lem}[thm]{Lemma}
\newtheorem{prop}[thm]{Proposition}
\newtheorem{conj}[thm]{Conjecture}
\newtheorem{quest}[thm]{Question}

\theoremstyle{definition}
\newtheorem{Def}[thm]{Definition}
\newtheorem{rem}[thm]{Remark}
\newtheorem*{ack}{Acknowledgement}

\newtheorem{case}{Case}

\numberwithin{equation}{section}
\numberwithin{figure}{section}


\def\rchi{{\hbox{\raise1.5pt\hbox{$\chi$}}}}
\def\Aut{{\text{\rm{Aut}}}}

\def\isom{\cong}
\def\tensor{\otimes}
\def\dsum{\oplus}

\def\a{\alpha}
\def\b{\beta}
\def\lam{\lambda}

\def\gam{\gamma}
\def\Gam{\Gamma}


\newcommand{\Mbar}{{\overline{\mathcal{M}}}}
\newcommand{\Qbar}{{\overline{\mathbb{Q}}}}

\newcommand{\bP}{{\mathbb{P}}}
\newcommand{\bC}{{\mathbb{C}}}

\newcommand{\bE}{{\mathbb{E}}}

\newcommand{\bQ}{{\mathbb{Q}}}
\newcommand{\bR}{{\mathbb{R}}}

\newcommand{\bZ}{{\mathbb{Z}}}
\newcommand{\cA}{{\mathcal{A}}}

\newcommand{\cM}{{\mathcal{M}}}

\newcommand{\cK}{{\mathcal{K}}}

\newcommand{\cO}{{\mathcal{O}}}

\newcommand{\cX}{{\mathcal{X}}}
\newcommand{\la}{{\langle}}
\newcommand{\ra}{{\rangle}}
\newcommand{\half}{{\frac{1}{2}}}
\newcommand{\bp}{{\mathbf{p}}}
\newcommand{\bx}{{\mathbf{x}}}

\newcommand{\rar}{{\rightarrow}}
\newcommand{\lrar}{{\longrightarrow}}

\newcommand{\hxi}{{\hat{\xi}}}

\textwidth = 6in
\oddsidemargin = 0.25in
\evensidemargin=0.25in
\textheight = 8.7in
\topmargin = -0.2in

\begin{document}
\large
\setcounter{section}{0}

\title[Spectral curve of Eynard-Orantin
recursion and Laplace transform]
{The spectral curve of 
the Eynard-Orantin  recursion via the
Laplace transform}

\author[O.\ Dumitrescu]{Olivia Dumitrescu}
\address{
Department of Mathematics\\
University of California\\
Davis, CA 95616--8633, U.S.A.}
\email{dolivia@math.ucdavis.edu}

\author[M.\ Mulase]{Motohico Mulase}
\address{
Department of Mathematics\\
University of California\\
Davis, CA 95616--8633, U.S.A.}
\email{mulase@math.ucdavis.edu}

\author[B.\ Safnuk]{Brad Safnuk}  
\address{
Department of Mathematics\\
Central Michigan University\\
Mount Pleasant, MI 48859}
\email{brad.safnuk@cmich.edu}

\author[A.\ Sorkin]{Adam Sorkin}
\address{
Department of Mathematics\\
University of California\\
Davis, CA 95616--8633, U.S.A.}
\email{asorkin@math.ucdavis.edu}

\begin{abstract}

The Eynard-Orantin
recursion formula provides an effective 
tool for certain enumeration problems in
geometry. The formula requires a
\emph{spectral curve} and the recursion kernel.
We present a uniform construction 
of the spectral curve and the recursion kernel
from the unstable geometries
of the original counting problem. 
We examine this construction
 using four concrete examples:
Grothendieck's
dessins d'enfants (or higher-genus analogue
of the Catalan numbers), the intersection numbers of
tautological cotangent classes on
 the moduli stack of stable pointed curves, 
single Hurwitz numbers,  
and the stationary Gromov-Witten invariants
of the complex projective line.
\end{abstract}

\subjclass[2000]{Primary: 14H15, 14N35, 05C30, 11P21;
Secondary: 81T30}

\maketitle

\allowdisplaybreaks

\tableofcontents

\section{Introduction}
\label{sect:Intro}

\emph{What is the mirror dual object of the 
Catalan numbers?} 
We wish to make sense  of this question in 
the present paper. Since the homological mirror
symmetry is a categorial equivalence, it does
not require the existence of 
underlying spaces to which the
categories are associated. By identifying  
the Catalan numbers with a counting problem
similar to Gromov-Witten theory, we come up
with an \emph{equation}
\begin{equation}
\label{eq:x=z+1/z}
x = z +\frac{1}{z}
\end{equation}
as their mirror dual. It is not a coincidence
that (\ref{eq:x=z+1/z}) is the Landau-Ginzburg
model in one variable \cite{Ballard, Mirror}.
Once the mirror dual object is identified,
we can calculate the \emph{higher-genus
analogue} of the Catalan numbers
using the \emph{Eynard-Orantin topological 
recursion formula}. This recursion 
therefore provides a mechanism of
calculating the higher-order 
quantum corrections term by term.

The purpose of this paper is to present a
systematic construction of genus $0$ spectral 
curves of the Eynard-Orantin recursion 
formula \cite{EO1, EO3}. 
Suppose we have a symplectic space $X$ on 
the A-model side. If the Gromov-Witten theory
of $X$ is controlled by an integrable system,
then the homological mirror dual of $X$ is
expected to be 
a family of \emph{spectral curves} $\Sigma$. 
Let us consider the descendant Gromov-Witten
invariants of $X$ as a function in integer variables.
The \emph{Laplace transform} of these
functions are symmetric meromorphic functions 
defined on the products of $\Sigma$.
We expect that 
they satisfy the {Eynard-Orantin
topological recursion}  on the B-model side
defined on the curve $\Sigma$.

More specifically, we construct the
spectral curve using the Laplace transform
of the descendant Gromov-Witten type
invariants for the \emph{unstable} geometries
$(g,n)=(0,1)$ and $(0,2)$.
We give four concrete examples in this paper:
\begin{itemize}
\item The number of  dessins d'enfants 
of Grothendieck, which can be thought of
as higher-genus analogue of the Catalan numbers.
\item The $\psi$-class intersection numbers
$\la \tau_{d_1}\cdots \tau_{d_n}\ra_{g,n}$
on the moduli space $\Mbar_{g,n}$ of pointed
stable curves \cite{CMS,DVV,EO1,K1992,W1991}.
\item Single Hurwitz numbers \cite{BM, EMS,MS}.
\item The stationary Gromov-Witten invariants
of $\bP^1$ \cite{NS2,OP2}.
\end{itemize}
The spectral curves we construct are 
listed in Table~1. 
The Eynard-Orantin recursion formula for
the single Hurwitz numbers \cite{BEMS, BM, 
EMS, MZ} and the $\psi$-class intersection
numbers \cite{EO1} are known. 
Norbury and Scott conjecture that 
the stationary Gromov-Witten invariants 
of $\bP^1$ also satisfy the Eynard-Orantin recursion
\cite{NS2}. A similar 
statement for the number of dessins d'enfants 
does not seem to be known. We give a full proof
of this fact in this paper. 

\begin{table}[htb]
  \centering
  
  \begin{tabular}{|c||c|}

\hline 

Grothendieck's Dessins & $\begin{cases}
x = z +\frac{1}{z}\\
y = -z
\end{cases}$  \tabularnewline
\hline 
$\la \tau_{d_1}\cdots \tau_{d_n}\ra_{g,n}$ & $\begin{cases}
x = z^2\\
y = -z
\end{cases}$  \tabularnewline
\hline 
Single Hurwitz Numbers& 
$\begin{cases}
x = z e^{1-z}\\
y = e^{z-1}
\end{cases}$  \tabularnewline
\hline 
 Stationary GW
 Invariants
of $\bP^1$ & $\begin{cases}
x = z +\frac{1}{z}\\
y = -\log (1+z^2)
\end{cases}$  \tabularnewline
\hline 
\end{tabular}
\bigskip

  \caption{Examples of spectral curves.}
\end{table}

Let $D_{g,n}(\mu_1,\dots,\mu_n)$ denote the
weighted count of clean Belyi morphisms 
of  smooth connected algebraic curves of genus 
$g$ with $n$ poles of order $(\mu_1,\dots,\mu_n)$. 
We first prove

\begin{thm}
\label{thm:Dgn intro}
For $2g-2+n\ge 0$ and $n\ge 1$, the number of 
{clean} Belyi morphisms 
satisfies the following equation:
\begin{multline}
\label{eq:Dgn intro}
\mu_1 D_{g,n}(\mu_1,\dots,\mu_n)
=
\sum_{j=2}^n (\mu_1+\mu_j-2)
D_{g,n-1}(\mu_1+\mu_j-2, \mu_{[n]\setminus
\{1,j\}})
\\
+
\sum_{\a+\b=\mu_1-2}\a\b
\Bigg[
D_{g-1,n+1}(\a,\b,\mu_{[n]\setminus\{1\}})
+
\sum_{\substack{g_1+g_2=g\\
I\sqcup J=\{2,\dots,n\}}}
D_{g_1,|I|+1}(\a,\mu_I)D_{g_2,|J|+1}(\b,\mu_J)
\Bigg],
\end{multline}
where $\mu_I=(\mu_i)_{i\in I}$
for a subset $I\subset [n]=\{1,2,\dots,n\}$.
\end{thm}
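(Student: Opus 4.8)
The plan is to interpret $D_{g,n}(\mu_1,\dots,\mu_n)$ combinatorially as a weighted count of clean Belyi morphisms, translate this into a count of certain decorated ribbon graphs (the dessins), and then derive the recursion by analyzing what happens along the boundary of the cell adjacent to the first marked face. First I would recall that a clean Belyi morphism on a genus-$g$ curve with $n$ poles of orders $(\mu_1,\dots,\mu_n)$ corresponds, via the standard dictionary, to a connected ribbon graph $\Gamma$ drawn on the surface whose faces are labelled $1,\dots,n$ with face $i$ having valence $\mu_i$ (counting each edge on the boundary of face $i$ with appropriate multiplicity), together with the extra ``cleanness'' condition on the ramification over the third branch point; the weight $1/|\Aut\Gamma|$ makes $D_{g,n}$ a genuine orbifold count. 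The left-hand side $\mu_1 D_{g,n}$ should be thought of as the count of such graphs with a distinguished oriented edge on the boundary of face $1$ (there are $\mu_1$ choices of such a directed boundary segment, modulo automorphisms this gives the factor $\mu_1$).

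The core of the argument is then a bijective/surgery analysis: given a dessin with a marked directed edge $e$ on face $1$, look at the other face incident to $e$. There are three mutually exclusive possibilities, matching the three terms on the right-hand side of \eqref{eq:Dgn intro}. (i) The edge $e$ separates face $1$ from another distinct face $j$ with $j\in\{2,\dots,n\}$: contracting/removing $e$ merges faces $1$ and $j$ into a single face of the expected valence $\mu_1+\mu_j-2$, producing a dessin counted by $D_{g,n-1}$, and the combinatorial factor $(\mu_1+\mu_j-2)$ records the re-marking of a directed edge on the merged face; summing over $j$ gives the first sum. (ii) The edge $e$ borders face $1$ on both sides, and cutting along $e$ keeps the surface connected: this drops the genus by one and creates two new faces, of valences $\alpha$ and $\beta$ with $\alpha+\beta=\mu_1-2$, giving the $D_{g-1,n+1}(\alpha,\beta,\dots)$ term with weight $\alpha\beta$. (iii) Same local picture as (ii) but cutting along $e$ disconnects the surface into two pieces of genera $g_1,g_2$ with $g_1+g_2=g$ and the remaining faces distributed as $I\sqcup J$; this yields the product term $D_{g_1,|I|+1}D_{g_2,|J|+1}$. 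Care must be taken with the cleanness condition: I would check that each surgery (edge contraction in case (i), edge-cut in cases (ii)–(iii)) sends clean Belyi morphisms to clean Belyi morphisms and is reversible, so that the counts match including the automorphism weights; this is where the ``clean'' hypothesis (every vertex over the ``black'' point has valence $2$, or the analogous normalization used in the paper) is essential to make the local edge picture rigid.

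I expect the main obstacle to be the careful bookkeeping of automorphisms and of the combinatorial multiplicities $\mu_1$, $(\mu_1+\mu_j-2)$, $\alpha\beta$ in the orbifold ($1/|\Aut|$-weighted) count, rather than the topology of the surgeries themselves. The cleanest way to handle this is to pass to the ``labelled'' model where one rigidifies by choosing a directed edge, so that all objects have trivial automorphisms and the weights become honest cardinalities; one then proves the identity at the level of these rigidified sets and divides by the appropriate symmetry factors at the end. A secondary technical point is the boundary cases of the stable range $2g-2+n\ge 0$ with $n\ge 1$: when $(g,n-1)$ or one of $(g_i,|I|+1)$ falls into an unstable geometry $(0,1)$ or $(0,2)$, the corresponding $D$ is given by the separately-defined unstable values, and one must verify that the surgery analysis remains valid there (this is precisely the input that later lets these unstable values determine the spectral curve). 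Finally, I would double-check the degenerate sub-case of (i) where $\mu_1+\mu_j-2=0$, i.e. $\mu_1=\mu_j=1$, to confirm the term vanishes or is interpreted consistently, matching the convention that $D_{g,n-1}$ with a zero entry is zero.
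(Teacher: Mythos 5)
Your proposal is correct and is essentially the paper's own argument: the paper works in the dual picture (the dessin $\check\Gam$ with $n$ labeled \emph{vertices} of degrees $\mu_i$), rigidifies by placing an outward arrow on one of the $\mu_1$ half-edges at vertex $1$, and splits into exactly your cases — shrinking an edge joining vertices $1$ and $j$ (factor $\mu_1+\mu_j-2$ for the reverse marking), or removing a loop at vertex $1$ and splitting the vertex into degrees $\a,\b$ with $\a+\b=\mu_1-2$ (factor $\a\b$ for the two reverse markings), with the connected/disconnected dichotomy giving the $D_{g-1,n+1}$ and product terms. Your worry about preserving cleanness is moot, since the dictionary between clean Belyi morphisms and dessins reduces everything to a purely graph-theoretic surgery with no extra structure to preserve.
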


The simplest case
$$
D_{0,1}(2m) = \frac{1}{2m} \;C_m 
$$
is  given by the Catalan number 
$C_m=\frac{1}{m+1}\binom{2m}{m}$.
The next case $D_{0,2}(\mu_1,\mu_2)$ is 
calculated in \cite{Kodama,KP}.
Note that the $(g,n)$-terms appears
also on the right-hand side of (\ref{eq:Dgn intro}).
Therefore, this  is merely an 
equation, not an effective recursion formula. 

Define the Eynard-Orantin differential form
by
$$
W_{g,n}^D(t_1,\dots,t_n)
=d_1\cdots d_n 
\sum_{\mu_1,\dots,\mu_n> 0}
D_{g,n}(\mu_1,\dots,\mu_n)
e^{-(\mu_1w_1+\cdots+\mu_n w_n)},
$$
where the $w_j$-coordinates and $t_j$-coordinates
are related by
$$
e^{w_j} = \frac{t_j+1}{t_j-1}+\frac{t_j-1}{t_j+1}.
$$
Then 
\begin{thm}
\label{thm:DEO intro}
The Eynard-Orantin differential forms 
for $2g-2+n>0$ satisfy
the following topological recursion formula
\begin{multline}
\label{eq:DEO intro}
W_{g,n}^D(t_1,\dots,t_n)
\\
=
-\frac{1}{64} \; 
\frac{1}{2\pi i}\int_\gam
\left(
\frac{1}{t+t_1}+\frac{1}{t-t_1}
\right)
\frac{(t^2-1)^3}{t^2}\cdot \frac{1}{dt}\cdot dt_1
\Bigg[
W_{g-1,n+1}^D(t,{-t},t_2,\dots,t_n)
\\
+
\sum_{j=2}^n
\bigg(
W_{0,2}^D(t,t_j)W_{g,n-1}(-t,t_2,\dots,\widehat{t_j},
\dots,t_n)
+
W_{0,2}^D(-t,t_j)W_{g,n-1}(t,t_2,\dots,\widehat{t_j},
\dots,t_n)
\bigg)
\\
+
\sum^{\text{stable}} _
{\substack{g_1+g_2=g\\I\sqcup J=\{2,3,\dots,n\}}}
W_{g_1,|I|+1}^D(t,t_I) W_{g_2,|J|+1}^D({-t},t_J)
\Bigg].
\end{multline}
\end{thm}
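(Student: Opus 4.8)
The plan is to derive the topological recursion (\ref{eq:DEO intro}) directly from the combinatorial identity (\ref{eq:Dgn intro}) by applying, to both sides, the Laplace transform $\sum_{\mu_1,\dots,\mu_n>0}(\,\cdot\,)\,e^{-(\mu_1w_1+\cdots+\mu_nw_n)}$, then passing to the $t$-coordinates via $e^{w_j}=\frac{t_j+1}{t_j-1}+\frac{t_j-1}{t_j+1}=\frac{2(t_j^2+1)}{t_j^2-1}$, and finally applying $d_1\cdots d_n$. Note that this substitution is exactly the spectral curve $x=z+\frac1z$ of Table~1 read through $z=\frac{t+1}{t-1}$; the deck transformation $z\mapsto \frac1z$ of the covering $x\colon\bP^1\to\bP^1$ becomes $t\mapsto-t$, and its fixed points $t=0,\infty$ — the two ramification points of $x$ — are the poles around which the contour $\gamma$ in (\ref{eq:DEO intro}) runs. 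I write $F^D_{g,n}$ for the generating function of $D_{g,n}$, so that $W^D_{g,n}=d_1\cdots d_n F^D_{g,n}$.

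Before entering the stable range one records the genus-zero unstable input. Using $D_{0,1}(2m)=\frac1{2m}C_m$ and the Catalan generating function, $\partial_{w_1}F^D_{0,1}$ is computed in closed form and becomes rational in $z$ (equivalently in $t$); this is the computation that produces $y$, hence the spectral curve. From the formula for $D_{0,2}(\mu_1,\mu_2)$ of \cite{Kodama,KP} one checks that, after $d_1d_2$ and the change of variables, $W_{0,2}^D(t_1,t_2)=\frac{dt_1\,dt_2}{(t_1-t_2)^2}$, the normalized bidifferential on $\bP^1$ written in the coordinate in which the involution is $t\mapsto-t$. These two facts fix the recursion kernel, and hence the precise shape — in particular the constant $-\tfrac1{64}$ and the factor $\tfrac{(t^2-1)^3}{t^2}$ — of (\ref{eq:DEO intro}).

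Now one transforms (\ref{eq:Dgn intro}) term by term. On the left, $\mu_1$ becomes $-\partial_{w_1}$, so the left side becomes $-\partial_{w_1}F^D_{g,n}$ (before applying the $d$'s). On the right: for $\sum_{\a+\b=\mu_1-2}\a\b\,D_{g-1,n+1}(\a,\b,\mu_{[n]\setminus\{1\}})$ one writes the $\mu_1$-summation as $e^{-2w_1}\big(\sum_\a\a e^{-\a w_1}\big)\big(\sum_\b\b e^{-\b w_1}\big)$ acting on the two merged slots of $D_{g-1,n+1}$; because $w_1$ has the two preimages $\pm t_1$ under the change of variables, those slots are filled by $t$ and $-t$, producing $W^D_{g-1,n+1}(t,-t,t_2,\dots,t_n)$ paired against a kernel in $(t_1,t)$, and similarly for the stable part of the splitting sum. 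For the merging term $\sum_{j=2}^n(\mu_1+\mu_j-2)D_{g,n-1}(\mu_1+\mu_j-2,\dots)$ one sets $\nu=\mu_1+\mu_j-2$ and carries out the constrained geometric sum over $(\mu_1,\mu_j)$; together with the unstable $(0,2)$ pieces of the splitting sum, this reorganizes — through a partial fraction identity of exactly the shape of $W^D_{0,2}$ — into the middle group $W_{0,2}^D(t,t_j)W^D_{g,n-1}(-t,\dots)+W_{0,2}^D(-t,t_j)W^D_{g,n-1}(t,\dots)$ of (\ref{eq:DEO intro}), the two signs again being the two branches.

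The delicate point — and, in my view, the heart of the argument — is the bookkeeping of the $(g,n)$-terms noted after (\ref{eq:Dgn intro}). The unstable splittings $I=\emptyset$ and $J=\emptyset$ in the last sum of (\ref{eq:Dgn intro}) contribute $D_{0,1}(\a)D_{g,n}(\b,\mu_{[n]\setminus\{1\}})$ and its $\a\leftrightarrow\b$ mirror; under the transform each equals $e^{-2w_1}\,(-\partial_{w_1}F^D_{0,1})\,(-\partial_{w_1}F^D_{g,n})$, and $-\partial_{w_1}F^D_{g,n}$ is precisely the transformed left side. Collecting these with the left side yields an equation of the form (explicit rational function of $t_1$)$\,\cdot\,(-\partial_{w_1}F^D_{g,n})=[\text{lower-order right side}]$; dividing through by that rational function is what manufactures the factor $\tfrac{(t^2-1)^3}{t^2}\cdot\tfrac1{dt}$ in the kernel, and recognizing the resulting expression in $t$ as the residue sum $\tfrac1{2\pi i}\int_\gamma$ at $t=0,\infty$ finishes the derivation. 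Keeping straight the form degrees created by $d_1\cdots d_n$, the two branches $\pm t$, the factor $e^{-2w_1}$, and the Jacobian of $e^{w}=\frac{2(t^2+1)}{t^2-1}$ — and thereby pinning the overall constant to $-\tfrac1{64}$ — is where the genuine work lies; everything preceding it is formal manipulation of generating functions. The low cases $(g,n)=(0,3)$ and $(1,1)$, where one of the ``lower'' terms is itself unstable, should be checked by hand and serve as the base of the induction.
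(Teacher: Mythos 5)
Your plan is essentially the paper's own proof (Proposition A.1 together with the argument in the Appendix): Laplace-transform the edge-contraction identity term by term, absorb the unstable $(0,1)$ splittings $D_{0,1}(\a)D_{g,n}(\b,\dots)$ into the left-hand side and divide by the resulting rational function of $t_1$ to manufacture the kernel factor $\frac{(t^2-1)^3}{t^2}\cdot\frac{1}{dt}$, turn the constrained geometric sum over $\mu_j$ together with the unstable $(0,2)$ splittings into the $W_{0,2}^D$ terms, and identify the result with the residue evaluation of the contour integral over $\gamma$. One slip worth correcting: the Laplace transform of $D_{0,2}$ is $W_{0,2}^D(t_1,t_2)=\frac{dt_1\cdot dt_2}{(t_1-t_2)^2}-\frac{dx_1\cdot dx_2}{(x_1-x_2)^2}=\frac{dt_1\cdot dt_2}{(t_1+t_2)^2}$ (Proposition~\ref{prop:F02D}), not the bare Cauchy kernel $\frac{dt_1\cdot dt_2}{(t_1-t_2)^2}$; this happens to be harmless here, since both the antiderivative $\int_t^{-t}W_{0,2}^D(\cdot,t_1)$ in the kernel and the symmetrized middle terms come out identical for either choice, but the distinction is precisely the paper's point that the annulus amplitude is determined by the counting problem rather than taken to be the universal bidifferential.
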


This is now a recursion formula, since the 
topological type $(g',n')$ of the Belyi morphisms
appearing on the right-hand side satisfies
$$
2g'-2+n' = (2g-2+n)-1,
$$
counting the contributions from the disjoint union
of the domain curves additively.
A corollary to the recursion formula is
a combinatorial identity between the 
number of clean Belyi morphisms and
the number   of
lattice points on the
moduli space $\cM_{g,n}$ that has been 
studied in \cite{CMS,MP2010,N1,N2,NS1}. 

\begin{cor}
\label{cor:intro}
\begin{equation}
\label{eq:DinN intro}
D_{g,n}(\mu_1,\dots,\mu_n)
=
\sum_{\ell_1>\frac{\mu_1}{2}}
\cdots
\sum_{\ell_n>\frac{\mu_n}{2}}
\prod_{i=1}^n
\frac{2\ell_i-\mu_i}{\mu_i}
\binom{\mu_i}{\ell_i}
N_{g,n}(2\ell_1-\mu_i,\cdots,2\ell_n-\mu_n),
\end{equation}
where $N_{g,n}(\mu_1,\dots,\mu_n)$ is defined
by (\ref{eq:Ngn}).
\end{cor}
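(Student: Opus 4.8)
The plan is to deduce \eqref{eq:DinN intro} from the topological recursion of Theorem~\ref{thm:DEO intro} together with the known Eynard-Orantin recursion for the lattice-point counts $N_{g,n}$ established in \cite{CMS,MP2010,N1,N2,NS1}. Write $\widetilde D_{g,n}(\mu_1,\dots,\mu_n)$ for the right-hand side of \eqref{eq:DinN intro}; the goal is to show $\widetilde D_{g,n}=D_{g,n}$ for all $(g,n)$ with $2g-2+n>0$. Everything rests on one summation identity: for any sequence $(a_k)_{k\ge 1}$,
\[
\sum_{\mu\ge 1}\Bigl(\sum_{\ell>\mu/2}\frac{2\ell-\mu}{\mu}\binom{\mu}{\ell}a_{2\ell-\mu}\Bigr)x^{\mu}=\sum_{k\ge 1}a_k\,u^{k},\qquad u=x\,c(x^{2}),
\]
where $c(y)=\frac{1-\sqrt{1-4y}}{2y}=\sum_{m\ge 0}C_m y^m$ is the Catalan generating function; this is the ballot-number identity $\sum_{m\ge 0}\frac{k}{2m+k}\binom{2m+k}{m}y^m=c(y)^k$ after the change of indices $k=2\ell-\mu$, $m=\ell-k$. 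The substitution $u=x\,c(x^2)$ inverts to $x=u/(1+u^2)$, that is, $1/x=u+1/u$.

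First I would apply this identity in each of the $n$ variables: it shows that the generating function of $\widetilde D_{g,n}$ is obtained from that of $N_{g,n}$ by the change of variables $1/x_j=u_j+1/u_j$. Since $W_{g,n}^D$ is defined using $e^{w_j}=\frac{t_j+1}{t_j-1}+\frac{t_j-1}{t_j+1}$, i.e.\ $1/x_j=e^{w_j}$ with $x_j=e^{-w_j}$, one reads off the lattice-point coordinate as $u_j=\frac{t_j+1}{t_j-1}$. This M\"obius map carries the branch point $t=0$ to $u=-1$ and $t=\infty$ to $u=1$, matching branch points with the lattice-point spectral curve $x=u+1/u$, and it intertwines the deck involutions $t\mapsto -t$ and $u\mapsto 1/u$. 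Because the operator $d_1\cdots d_n$ does not depend on the choice of coordinate, it follows that
\[
\widetilde W_{g,n}:=d_1\cdots d_n\sum_{\mu_i>0}\widetilde D_{g,n}(\mu_1,\dots,\mu_n)\,e^{-(\mu_1w_1+\cdots+\mu_nw_n)}
\]
is precisely the pull-back, under $u_j=\frac{t_j+1}{t_j-1}$, of the Eynard-Orantin form $W_{g,n}^N$ of the lattice-point problem.

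It then remains to run the induction on $2g-2+n$. Pulling the known recursion for $W_{g,n}^N$ back under $u=\frac{t+1}{t-1}$ --- a direct manipulation of the two rational changes of variables, the Jacobian $du/dt$, and the $y$-differential --- should reproduce \eqref{eq:DEO intro} on the nose: the lattice-point recursion kernel must become $-\frac{1}{64}\bigl(\frac{1}{t+t_1}+\frac{1}{t-t_1}\bigr)\frac{(t^2-1)^3}{t^2}\frac{1}{dt}\,dt_1$, while the stable-range splitting and the unstable $W_{0,2}^D$ terms on the right survive because the substitution acts variable by variable. Together with the base of the induction --- the identification of the seed data $W_{0,1}^D$, $W_{0,2}^D$ of Theorem~\ref{thm:DEO intro} with the (unstable) lattice-point data, a direct check using $D_{0,1}(2m)=\frac{1}{2m}C_m$, the formula of \cite{Kodama,KP} for $D_{0,2}$, and the definition \eqref{eq:Ngn} of $N_{g,n}$ in the unstable range --- the uniqueness of solutions of the recursion in Theorem~\ref{thm:DEO intro} yields $\widetilde W_{g,n}=W_{g,n}^D$, hence $\widetilde D_{g,n}=D_{g,n}$ by comparing the coefficients of $e^{-(\mu_1w_1+\cdots+\mu_nw_n)}$.

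The step I expect to be the genuine obstacle is this last matching of recursions: checking that the lattice-point kernel and $y$-differential, after the rational substitution, become \emph{exactly} the dessins kernel together with the curve $y=-z$ of Table~1 (rather than merely agreeing up to an explicit rational factor), and reconciling the ad hoc unstable values $N_{0,1}$, $N_{0,2}$ with $W_{0,1}^D$ and $W_{0,2}^D$. An essentially equivalent but more elementary route would be to substitute \eqref{eq:DinN intro} directly into \eqref{eq:Dgn intro} and, using the ballot-number identity above in each argument, reduce it to the linear recursion satisfied by $N_{g,n}$; this trades the kernel computation for the same binomial manipulations while making the unstable-range bookkeeping more transparent.
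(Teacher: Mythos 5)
Your strategy is sound and is essentially the route the paper takes: identify the dessin forms with the lattice-point forms through the uniqueness of the Eynard--Orantin recursion, then translate between the two coefficient systems by a Catalan-type identity attached to $x=z+1/z$. Your ballot-number identity is correct (it is the Lagrange-inversion counterpart of the binomial expansion $(z+1/z)^{\mu}=\sum_{\ell}\binom{\mu}{\ell}z^{\mu-2\ell}$ that the paper uses in its residue extraction in the proof of Corollary~\ref{cor:D=N}), so the combinatorial half of your argument goes through.

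Two remarks on the middle step. First, the obstacle you single out --- pulling the lattice-point recursion back through $u=\frac{t+1}{t-1}$ and checking that the kernel and the $y$-differential land exactly on the dessin data --- does not actually arise: Theorem~\ref{thm:EOforL}, as quoted from \cite{CMS}, is already formulated on the \emph{same} spectral curve (\ref{eq:Dspectral}), in the \emph{same} preferred coordinate $t$, with the \emph{same} kernel (\ref{eq:Dkernel}) and the same initial forms $W_{1,1}^L=W_{1,1}^D$ and $W_{0,3}^L=W_{0,3}^D$; uniqueness of the recursion then gives $W_{g,n}^D=W_{g,n}^L$ with no change of variables to verify. Likewise your worry about reconciling unstable lattice-point data is moot: the paper shows that the $(0,1)$ and $(0,2)$ lattice-point geometries produce \emph{no} spectral curve at all (there are no admissible ribbon graphs of type $(0,1)$), and the base of the induction is the agreement of the two $(1,1)$ and $(0,3)$ forms, not of unstable seeds. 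Second, there is a small genuine gap in your translation step: your identity produces $\sum_k N_{g,n}(k)\,u^k$, whereas the Laplace transform $F_{g,n}^L$ of (\ref{eq:FgnL}) is $\sum_k N_{g,n}(k)\,u^{-k}$; these are exchanged by the deck transformation $u\mapsto 1/u$, i.e.\ $t\mapsto -t$, so identifying your $\widetilde W_{g,n}$ with the pullback of $W_{g,n}^L$ requires the parity $W_{g,n}^L(-t_1,\dots,-t_n)=(-1)^nW_{g,n}^L(t_1,\dots,t_n)$, which the paper invokes explicitly in (\ref{eq:D=N}). Once you add that one line, your argument closes.
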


The  recursion formula (\ref{eq:DEO intro})
is a typical example of the
\emph{Eynard-Orantin recursion} we discuss in this
paper. 
We establish this theorem by taking the Laplace
transform of (\ref{eq:Dgn intro}).
This is indeed a general theme. For every known 
case of the Eynard-Orantin recursion, we
establish its proof by taking the Laplace transform
of the counting formula like (\ref{eq:Dgn intro}).
For example, for the cases of single Hurwitz numbers
\cite{EMS,MZ} and open Gromov-Witten invariants
of $\bC^3$ \cite{Zhou3, Zhou4},
the counting formulas similar to 
(\ref{eq:Dgn intro}) are called the \emph{cut-and-join}
equations \cite{GJ,V,LLZ, Zhou1, Zhou2}.

The Laplace transform plays a mysterious role
in Gromov-Witten theory. We notice its
appearance in Kontsevich's work \cite{K1992}
that relates the Euclidean volume of $\cM_{g,n}$ 
and the intersection numbers on $\Mbar_{g,n}$,
and also in the work of Okounkov-Pandharipande
\cite{OP1} that relates the single Hurwitz numbers and
the enumeration of topological  graphs.
It has been  proved that in these two cases
the Laplace transform of the quantities in question
satisfies the Eynard-Orantin recursion 
\cite{CMS,EMS,EO2,MP2010,MZ}
for a particular choice of the spectral curve.

\emph{Then what is the role of the 
Laplace transform here?}
The answer we propose in this paper is that
\emph{the Laplace transform defines
the spectral curve}.  Since the spectral curve
is a B-model object, 
\textbf{the Laplace transform
plays the role of mirror symmetry}.

The Eynard-Orantin 
recursion formula is  an  
effective tool in  certain geometric enumeration.
The formula is originated  in
random matrix theory as a machinery to compute
the expectation value of a product of the 
resolvent of random matrices
(\cite{AMM}, \cite{E2004}).
In \cite{EO1, EO3}
Eynard and Orantin propose a novel point of view,
considering the recursion as a mechanism of 
\emph{defining}
 meromorphic symmetric differential forms
$W_{g,n}$
on the  product $\Sigma^n$ of 
a Riemann surface
$\Sigma$
 for every $g\ge 0$
and $n>0$. 
They derive in \cite{EO1, EO3}  many beautiful
properties that these quantities satisfy, including
modularity and
relations to integrable systems.

The effectiveness of the topological recursion in 
string theory is immediately noticed
\cite{DV,EMO,M2,OSY}.
A  remarkable discovery,
connecting the recursion formula and
geometry, is made
by Mari\~no \cite{M2} and
Bouchard, Klemm, Mari\~no and Pasquetti
\cite{BKMP}. It is formulated as the
\emph{Remodeling Conjecture}. 
This conjecture covers many aspects of
both closed and open Gromov-Witten invariants
of arbitrary toric Calabi-Yau threefolds. 
One of their statements    says
the following.
Let $X$ be an arbitrary toric Calabi-Yau threefold,
and $\Sigma$ its mirror curve. 
Apply the Eynard-Orantin 
recursion formula to $\Sigma$. Then
$W_{g,n}$ calculates the
open  Gromov-Witten invariants of $X$.
The validity of the topological recursion 
of \cite{EO1, EO3} is not limited to Gromov-Witten
invariants. It has been applied to  the HOMFLY
polynomials of torus knots \cite{BEM}, 
and understanding 
the role of quantum Riemann surfaces and
certain Seiberg-Witten invariants \cite{GS}.
A speculation also suggests its relation to colored
Jones polynomials and the hyperbolic volume 
conjecture of knot complements \cite{DFM}.

From the very beginning,  effectiveness of the 
Eynard-Orantin recursion in enumerative geometry
was suggested by physicists. 
Bouchard and Mari\~no conjecture in \cite{BM}
that particular generating functions of single
Hurwitz numbers satisfy the Eynard-Orantin topological 
recursion. They have come up to this conjecture
as the limiting case of the remodeling conjecture
for $\bC^3$ when the framing parameter tends to
$\infty$. The spectral curve for this scenario is
the \emph{Lambert curve}
$x = y e^{-y}$. The Bouchard-Mari\~no conjecture
is solved in \cite{BEMS, EMS, MZ}.
The work \cite{EMS} also influenced the solutions to  
 the remodeling conjecture for $\bC^3$
itself. The statement
on the open Gromov-Witten invariants
 was proved in
\cite{Chen, Zhou3, Zhou4}, and the closed case
 was proved in \cite{BCMS, Zhu}.

The Eynard-Orantin topological recursion 
\emph{starts} with
a spectral curve $\Sigma$. Thus it is reasonable
to propose the recursion formalism
whenever there is a natural curve in the problem
we study. Such curves may include the mirror curve
of a toric Calabi-Yau threefold \cite{BKMP, M2},
the zero locus of an A-polynomial \cite{DFM, GS},
the Seiberg-Witten curves \cite{GS},
the torus on which a knot is drawn \cite{BEM},
and the character variety of the fundamental group
of a knot complement relative to $SL(2,\bC)$ 
\cite{DFM}.
Now we ask the opposite question.

\begin{quest}
If an enumerative geometry problem is given, then
how do we find the spectral curve, with which 
the Eynard-Orantin formalism may provide a solution?
\end{quest}

In every work
of \cite{BCMS, CMS, Chen, EMS, 
EO1,EO2,EO3,MP2010, 
MZ,N2,NS2, Zhou3, Zhou4},
the spectral curve is considered to be {given}. 
\emph{How do we know that the particular 
choice of the
spectral curve is correct?}
Our proposal provides an answer to this question: 
\emph{the Laplace
transform of the unstable geometries 
$(g,n) = (0,1)$ and $(0,2)$ 
determines the spectral curve, and
the topological recursion formula itself.}
The key ingredients of the topological recursion 
are the spectral curve and the recursion 
kernel that is determined by the differential forms 
$W_{0,1}$ and $W_{0,2}$. In the literature
starting from \cite{EO1},
the word ``Bergman kernel'' is used for the
differential form $W_{0,2}$. 
But   it has indeed nothing to 
do with the classical \emph{Bergman kernel}
 in complex analysis. It is also treated as the
 universally given 2-form depending only on the
 geometry of the spectral curve. 
 We would rather emphasize in this paper
 that  this ``kernel'' is the Laplace transform
 of the \emph{annulus} amplitude, which should 
 be determined by the counting problem we start 
 with.

 Although it is still vague, our proposal is the following
 
 \begin{conj}[The Laplace transform conjecture]
 If the unstable geometries 
 $(g,n) = (0,1)$ and $(0,2)$ make sense in 
 a counting problem on the A-model
 side, then the Laplace transform
 of the solution to these cases determines the
 spectral curve and the recursion kernel
 of the Eynard-Orantin formalism, which is a 
 B-model theory. Thus the Laplace transform
 plays a role of mirror symmetry.
 The recursion then determines
  the solution to the original counting problem 
    for all $(g,n)$. 
 \end{conj}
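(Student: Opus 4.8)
The conjecture as stated is not yet a precise assertion — neither ``make sense'' nor ``determines'' is pinned down, and a spectral curve is at best unique up to the obvious coordinate symmetries — so what can actually be established is a family of precise instances, and the plan is to lay out the uniform machinery that produces each one; the four examples of the paper (Grothendieck's dessins, single Hurwitz numbers, $\psi$-class intersection numbers, stationary Gromov-Witten invariants of $\bP^1$) are the cases to be run through this machinery. The first step is to fix the input: a counting problem supplying numbers $F_{g,n}(\mu_1,\dots,\mu_n)$ indexed by positive integers, together with a \emph{combinatorial} (cut-and-join type) recursion of the shape of (\ref{eq:Dgn intro}) — a linear ``merging'' sum $\sum_{j\ge2}(\mu_1+\mu_j-c)F_{g,n-1}(\mu_1+\mu_j-c,\dots)$ plus a quadratic ``splitting'' sum $\sum_{\a+\b=\mu_1-c}\a\b\,[F_{g-1,n+1}+\sum F_{g_1}F_{g_2}]$ — with genuine solutions $F_{0,1}$ and $F_{0,2}$ for the unstable geometries. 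For the dessins this recursion is Theorem~\ref{thm:Dgn intro}; for single Hurwitz numbers and for the intersection numbers it is the respective cut-and-join / Virasoro relation, and the corresponding Eynard-Orantin recursions are already known \cite{BEMS,EMS,MZ,EO1}; for $\bP^1$ only the spectral curve is produced here, the recursion itself remaining the Norbury-Scott conjecture \cite{NS2}.

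The second step is the ``mirror map'': Laplace-transform the disk amplitude, $\sum_{\mu>0}F_{0,1}(\mu)e^{-\mu w}$, pass to a uniformizing coordinate $t$ in which it becomes rational, and \emph{define} the spectral curve $\Sigma$ to be $\bP^1$ with $(x,y)$ given by $e^{w}=x(t)$ and $y\,dx=d\sum_{\mu>0}F_{0,1}(\mu)e^{-\mu w}$; similarly compute the annulus amplitude $W_{0,2}(t_1,t_2)=d_1d_2\sum_{\mu_1,\mu_2>0}F_{0,2}(\mu_1,\mu_2)e^{-\mu_1w_1-\mu_2w_2}$. Reading off the local Galois involution $\sigma$ at each ramification point of $x$, one forms the Eynard-Orantin kernel
\[
K(t,t_1)=\frac{\tfrac12\int_{\sigma(t)}^{t}W_{0,2}(\,\cdot\,,t_1)}{W_{0,1}(t)-W_{0,1}(\sigma(t))},
\]
which for the dessins curve $x=z+\frac1z$, $y=-z$ (so $z=\frac{t+1}{t-1}$, $\sigma\colon t\mapsto-t$, $x(-t)=x(t)$) is exactly the integrand of (\ref{eq:DEO intro}): the prefactor $\frac{(t^2-1)^3}{t^2}$ and the constant $-\frac1{64}$ come from $\frac{dx}{dt}$ and from $W^D_{0,1}(t)-W^D_{0,1}(-t)=\frac{32\,t^2}{(t^2-1)^3}dt$, while any $\frac{dx_1\,dx_2}{(x_1-x_2)^2}$ discrepancy in $W_{0,2}$ is immaterial because it integrates to $0$ along the path from $\sigma(t)$ to $t$. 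This step is purely mechanical once $F_{0,1}$ and $F_{0,2}$ are known.

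The third step — the substantive one — is to prove that $W_{g,n}=d_1\cdots d_n\sum F_{g,n}(\mu)e^{-\sum\mu_iw_i}$ satisfies the Eynard-Orantin recursion with this kernel, and this is done by Laplace-transforming the combinatorial recursion: the weight $\mu_1$ becomes $-\partial_{w_1}$; the merging sum, via the convolution identity $\sum_{\mu_1,\mu_j>0}f(\mu_1+\mu_j)a^{\mu_1}b^{\mu_j}=\frac{ab}{b-a}\sum_{\nu}f(\nu)(b^{\nu-1}-a^{\nu-1})$ with $a=e^{-w_1},b=e^{-w_j}$, produces the $W_{0,2}\cdot W_{g,n-1}$ terms; the splitting sum, after $\mu_1=\a+\b+c$ and factoring the independent $\a,\b$ summations, produces the $W_{g-1,n+1}(t,\sigma(t),\dots)$ term and the $\sum^{\mathrm{stable}}W_{g_1}(t,\cdot)W_{g_2}(\sigma(t),\cdot)$ terms; and the self-referential $(0,1)$ summands of the splitting sum (the $F_{0,1}(\a)F_{g,n}(\b,\dots)$ pieces, which is why (\ref{eq:Dgn intro}) is only an identity and not yet a recursion) combine with $-\partial_{w_1}F_{g,n}$ into a single linear operator on $W_{g,n}$ inverted by $\frac{1}{2\pi i}\int_\gam K(t,t_1)(\,\cdot\,)$, with $\gam$ a small loop around the ramification points. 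Running the recursion in reverse then reconstructs every $F_{g,n}$ from $F_{0,1}$ and $F_{0,2}$, which is the last clause of the conjecture. The main obstacle is that none of this follows from a single formal identity: ``Laplace transform of a cut-and-join recursion $=$ Eynard-Orantin recursion'' has to be checked case by case, because the two sums must reorganize \emph{without remainder} (the boundary terms of the geometric sums, the shift $c$, and every point at which an unstable $W_{0,1}$ or $W_{0,2}$ is generated must all cancel or reassemble correctly), and the legitimacy of the residue manipulations and of inverting the operator above rests on the $W_{g,n}$ being rational in the $t_i$ — equivalently on polynomiality statements such as the one relating $D_{g,n}$ to the lattice-point counts $N_{g,n}$ in Corollary~\ref{cor:intro}. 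Making the conjecture into a theorem would mean isolating the structural hypotheses on a counting problem that force simultaneously a cut-and-join recursion of the right shape and the rationality of its Laplace transform, plus a uniqueness statement for the spectral curve it yields.
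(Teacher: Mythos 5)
Your proposal is correct and follows essentially the same route as the paper: since the statement is an avowedly vague conjecture, the paper (like you) does not prove it but substantiates it instance by instance — deriving the spectral curve and kernel from the Laplace transforms of the $(0,1)$ and $(0,2)$ data, and then showing (in the appendix for dessins, and in \cite{EMS,MZ,EO1} for Hurwitz numbers and intersection numbers) that the Laplace transform of the cut-and-join-type relation reorganizes into the Eynard-Orantin recursion, with the $\bP^1$ case left at the level of identifying the spectral curve. Your identification of the kernel constants, the convolution identity for the merging sum, the absorption of the unstable $(0,1)$ terms into an invertible linear operator, and the reliance on polynomiality of $W_{g,n}$ for the residue calculus all match the paper's appendix computation.
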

 
The Eynard-Orantin recursion is a
 process of \emph{quantization}
 \cite{BEM, GS}. Thus the implication 
 of the conjecture is that 
 \emph{quantum invariants 
 are uniquely 
 determined by the disk and annulus amplitudes}.
 For example, single Hurwitz numbers
 $h_{g,\mu}$ are all determined by the first
 two cases $h_{0,(\mu_1)}$ and
 $h_{0,(\mu_1,\mu_2)}$. The present paper
 and our previous work \cite{EMS,MZ}
  establish this fact. The Lambert curve
  is the mirror dual of the number of 
  trees.

 The organization of this paper is the following.
In Section~\ref{sect:EO} we present the 
Eynard-Orantin recursion formalism 
for the case of genus $0$ spectral curve. 
Higher genus situations will be discussed elsewhere.
Sections~\ref{sect:dessin} and \ref{sect:LT}
deal with the
counting problem of Grothendieck's 
dessins d'enfants. We present our new results
on this problem, which are 
Theorem~\ref{thm:Dgn intro} and
Theorem~\ref{thm:DEO intro}.
We are inspired by
Kodama's beautiful talk \cite{Kodama} 
(that is based on \cite{KP}) to come
up with the generating function of the
Catalan numbers as the spectral curve for
this problem. We are grateful to 
G.~Gliner for drawing our attention to \cite{Kodama}.
The counting problem of the lattice points
on $\cM_{g,n}$ of \cite{CMS, MP2010,N1,N2}
is closely related to the counting 
of dessins, which is also treated in 
Section~\ref{sect:LT}.
The Eynard-Orantin recursion becomes 
identical to the Virasoro constraint condition
for the $\psi$-class intersection numbers
on $\Mbar_{g,n}$. We discuss this relation
in Section~\ref{sect:kont}, 
using Kontsevich's idea that the intersection 
numbers on $\Mbar_{g,n}$ are essentially
the same as Euclidean volume of $\cM_{g,n}$.
Section~\ref{sect:Hurwitz} is devoted to 
single Hurwitz numbers. In our earlier
work \cite{EMS,MZ} we used the Lambert
curve as given. Here we reexamine the 
Hurwitz counting problem and derive
the Lambert curve from the unstable
geometries.
We then consider the Norbury-Scott conjecture
\cite{NS2} in Section~\ref{sect:GWP1},
which states that the generating functions
of stationary Gromov-Witten invariants of
$\bP^1$ satisfy the Eynard-Orantin recursion. 
We are unable to prove this conjecture. What
we establish in this section is \emph{why}
the spectral curve of \cite{NS2} is the right choice
for this problem.

The subject of this paper is closely
related to random matrix theory. Since 
the matrix model side of the story has
been extensively discussed by the
original authors \cite{EO3}, we do not 
deal with that aspect in the current 
paper.

\section{The Eynard-Orantin differential forms
and the topological recursion}
\label{sect:EO}

We use the following mathematical definition
for the topological recursion of Eynard-Orantin
for a genus $0$ spectral curve. 
The differences between our definition and the
original formulation found in \cite{EO1,EO3}
are of the philosophical nature. Indeed, the original
formula and ours produce the exact same answer
in all examples we examine in this paper.

\begin{Def}
\label{def:EO}
We start with $\bP^1$ with a 
\emph{preferred  coordinate} $t$. 
Let $S\subset \bP^1$ be a finite collection of points
and compact real curves such that 
the complement $\Sigma = \bP^1\setminus S$ is 
connected.
The 
\emph{spectral curve} of genus $0$ is the 
data $(\Sigma, \pi)$ consisting
of a Riemann surface  $\Sigma$
 and a simply ramified
holomorphic map
\begin{equation}
\label{eq:xprojection}
\pi:\Sigma\owns t\longmapsto \pi(t) = x\in \bP^1
\end{equation}
so that its differential $dx$ has only simple 
zeros. Let us denote by $R=\{p_1,\dots,p_r\}\subset 
\Sigma$ 
 the 
ramification points, and by 
$$
U = \sqcup_{j=1} ^r U_j
$$
the disjoint union of small neighborhood
 $U_j$ around each $p_j$ such that 
 $\pi:U_j\rightarrow \pi(U_j)\subset \bP^1$ is
 a double-sheeted covering ramified only at $p_j$.
 We denote by $\bar{t}=s(t)$ the local Galois conjugate
of $t\in U_j$.
The canonical sheaf of $\Sigma$ is denoted by $\cK$.
Because of our choice of the preferred coordinate
$t$, we have a preferred 
basis $dt$ for $\cK$ and 
$\partial/\partial t$ for $\cK^{-1}$.
The meromorphic differential
forms $W_{g,n}(t_1,\dots,t_n)$,
$g=0, 1, 2, \dots, n= 1, 2, 3, \dots,$
are said to satisfy the \textbf{Eynard-Orantin
topological recursion} if the following conditions 
are satisfied:
\begin{enumerate}
\item $W_{0,1}(t) \in H^0(\Sigma,\cK)$.
\item $W_{0,2}(t_1,t_2) = \frac{dt_1\cdot dt_2}
{(t_1-t_2)^2} -\pi^*
\frac{dx_1\cdot dx_2}
{(x_1-x_2)^2}
\in  H^0(\Sigma\times \Sigma, \cK^{\tensor 2}(2\Delta))$, 
where $\Delta$ is the diagonal of $\Sigma\times 
\Sigma$.
\item The recursion kernel $K_j(t,t_1)
\in H^0(U_j\times C,(\cK_{U_j}^{-1}\tensor
\cK )(\Delta))$
for $t\in U_j$ and $t_1\in C$ is defined by
\begin{equation}
\label{eq:kernel}
K_j(t,t_1) = \half\; \frac{\int_t ^{\bar{t}}
W_{0,2}(\cdot,t_1)}
{W_{0,1}(\bar{t})-W_{0,1}(t)}.
\end{equation}
The kernel is an algebraic operator
that multiplies $dt_1$ while contracts
$dt$.

\item 
The general
differential forms
$W_{g,n}(t_1,\dots,t_n) \in
 H^0(\Sigma^n, \cK(*R)^{\tensor n})$
are meromorphic symmetric differential forms
with poles  only at the ramification points $R$ for
$2g-2+n>0$, and 
are given by the recursion formula
\begin{multline}
\label{eq:EO}
W_{g,n}(t_1,t_2,\dots,t_n)
= \frac{1}{2\pi i} 
\sum_{j=1} ^r 
\oint_{U_j} K_j(t,t_1)
\Bigg[
W_{g-1,n+1}(t,\bar{t},t_2,\dots,t_n)\\
+
\sum^{\text{No $(0,1)$ terns}} _
{\substack{g_1+g_2=g\\I\sqcup J=\{2,3,\dots,n\}}}
W_{g_1,|I|+1}(t,t_I) W_{g_2,|J|+1}(\bar{t},t_J)
\Bigg].
\end{multline}
Here the integration is taken with respect to
$t\in U_j$ along a positively oriented 
simple closed loop around $p_j$, and
$t_I = (t_i)_{i\in I}$
for a subset $I\subset \{1,2,\dots,n\}$.

\item The differential form
$W_{1,1}(t_1)$ requires a separate treatment
because  $W_{0,2}(t_1, t_2)$
is regular at the ramification points but
has poles elsewhere.
\begin{equation}
\label{eq:W11}
W_{1,1}(t_1) = 
 \frac{1}{2\pi i} 
\sum_{j=1} ^r 
\oint_{U_j} K_j(t,t_1) 
\left.\left[
W_{0,2}(u,v)+\pi^* \frac{dx(u)\cdot dx(v)}
{(x(u)-x(v))^2}
\right]
\right|_{\substack{u=t\\v=\bar{t}}}.
\end{equation}
Let  
 $y:\Sigma\lrar \bC$ 
 be a holomorphic function
defined by
the equation
\begin{equation}
\label{eq:ydx}
W_{0,1}(t) = y(t)dx(t).
\end{equation}
Equivalently, we can define the function 
by contraction $y = i_\cX W_{0,1}$, where 
$\cX$ is the vector field on $\Sigma$ dual to 
$dx(t)$ with respect to the coordinate $t$. 
Then we have an embedding
$$
\Sigma\owns t\longmapsto (x(t),y(t))\in\bC^2.
$$
\end{enumerate}
\end{Def}

\begin{rem}
The recursion (\ref{eq:EO}) 
also applies to
$(g,n)=(0,3)$, which gives $W_{0,3}$
in terms of $W_{0,2}$.
In \cite[Theorem~4.1]{EO1} 
an equivalent but often more
useful formula for $W_{0,3}$ is given:
\begin{equation}
\label{eq:W03}
W_{0,3}(t_1,t_2,t_3) =\frac{1}{2\pi i} 
\sum_{j=1} ^r 
\oint_{U_j}
\frac{W_{0,2}(t,t_1)W_{0,2}(t,t_2)W_{0,2}(t,t_3)}
{dx(t)\cdot dy(t)}.
\end{equation}
\end{rem}

\section{Counting Grothendieck's dessins d'enfants}
\label{sect:dessin}

The A-model side of the problem we 
consider in this section is the counting 
problem of Grothendieck's
\emph{dessins d'enfants} 
(see for example, \cite{Schneps,SL}) for a fixed 
topological type of Belyi morphisms \cite{Belyi}.
Gromov-Witten theory of an algebraic 
variety $X$ is an intersection theory of
naturally defined divisors on the moduli stack 
$\Mbar_{g,n}(X)$ of
stable morphisms from $n$-pointed algebraic 
curves of genus $g$  to the target variety $X$. 
Since we are considering tautological divisors,
their $0$-dimensional intersection points are
also  \emph{natural}. These points determine
a finite set on $\Mbar_{g,n}$ via the 
stabilization morphism.
If
we expect that the Gromov-Witten theory 
of $X$ satisfies the Eynard-Orantin recursion,
then we should also expect that the counting problem
of naturally defined finite sets of points on 
$\Mbar_{g,n}$ may satisfy the Eynard-Orantin 
recursion. 

Pointed curves defined over $\Qbar$ form a
dense subset of $\Mbar_{g,n}$. To specify
$n$, we need to use Belyi morphisms. 
When we identify a curve over $\Qbar$ with 
a Belyi morphism, a natural counting 
problem arises by considering the profile
of the Balyi morphism at the branched points.
In this way we arrive at  canonically defined
finite sets of points on $\Mbar_{g,n}$. 

More specifically, consider a Belyi morphism
\begin{equation}
\label{eq:Belyi}
b:C\lrar \bP^1
\end{equation}
of a smooth algebraic curve $C$ of genus $g$. 
This means $b$ is branched only
over $0,1,\infty\in\bP^1$. By Belyi's Theorem 
\cite{Belyi},
$C$ is defined over $\Qbar$. 
Let $q_1,\dots,q_n$ be poles of $b$ of
orders $(\mu_1,\dots,\mu_n)\in\bZ_+^n$.
This vector of positive integers is the \emph{profile}   of $b$   at $\infty$. In our enumeration we 
\emph{label} all poles of $b$. Therefore, 
an automorphism of a Balyi morphism  
 preserves the set of poles point-wise. 
 
 A \emph{clean} Belyi morphism is a special
 class of Belyi morphism 
 of even degree that has profile 
 $(2,2,\dots,2)$ over the branch point $1\in\bP^1$.
 We note that
 a complex algebraic curve is defined over 
 $\Qbar$ if and only if it admits a clean 
 Belyi morphism. 
 Let us denote by $D_{g,n}(\mu_1,\dots,\mu_n)$
 the number of genus $g$ clean Belyi morphisms
 of  profile $(\mu_1,\dots,\mu_n)$ at $\infty\in\bP^1$.
 This is the number we study in this section.

 We first derive a recursion equation among
 $D_{g,n}(\mu_1,\dots,\mu_n)$ for all $(g,n)$.
 This relation does not provide an effective
 recursion formula, because 
 $D_{g,n}(\mu_1,\dots,\mu_n)$ appears in
 the equation in a complicated manner. 
 We then compute the Laplace transform
 $$
 F_{g,n}^D(w_1,\dots,w_n)
 =\sum_{\mu_1,\dots,\mu_n>0}
 D_{g,n}(\mu_1,\dots,\mu_n)\;
 e^{-(\mu_1w_1+\cdots+\mu_n w_n)},
 $$
 and rewrite the recursion equation in terms
 of the Laplace transformed functions. 
 We then show that the symmetric differential forms
 $$
 W_{g,n}^D =d_1\cdots d_nF_{g,n}^D
 $$
 satisfy the Eynard-Orantin recursion formula.
 This time it is an effective recursion formula for
the 
 generating functions $W_{g,n}^D$ of the number
 $D_{g,n}(\mu_1,\dots,\mu_n)$
 of clean Belyi morphisms.

 Grothendieck 
 visualized the clean Belyi morphism
 by considering the inverse image
 \begin{equation}
 \label{eq:dessin}
 \Gam = b^{-1}([0,1])
 \end{equation}
 of the closed interval $[0,1]\subset\bP^1$
 by $b$
 (see his ``Esquisse
d'un programme'' reprinted
in \cite{SL}). This is what we call
\emph{dessin d'enfant}. It is a topological 
graph drawn on the algebraic curve $C$ being 
considered as a Riemann surface. 
We call each pre-image of $0\in\bP^1$ by $b$
a \emph{vertex} of $\Gam$. Since 
$b$ has profile $(2,\dots,2)$ over
$1\in\bP^1$, a pre-image of $1$ is the midpoint of
an \emph{edge} of $\Gamma$. The complement
$C\setminus \Gam$ of $\Gam$ in $C$ is the
disjoint union of $n$ disks centered at 
each $q_i$. By abuse of terminology we
call each disk a \emph{face} of $\Gam$. 
Then by Euler's formula we have
$$
2-2g = |b^{-1}(0)|-|b^{-1}(1)|+n.
$$
A dessin is a special kind of metric ribbon graph. 
A \emph{ribbon graph} of topological type $(g,n)$
is the $1$-skeleton of a cell-decomposition of a closed
oriented topological surface $C$ of genus $g$
that decomposes the surface into a disjoint union of
$0$-cells, $1$-cells, and $2$-cells. The number
of $2$-cells is $n$. 
Alternatively, a ribbon graph can be defined
as a graph with a cyclic order assigned to 
the incident half-edges at each vertex. 
When a positive real number, the  length,
is assigned to each edge of a ribbon graph,
we call it a \emph{metric} ribbon graph. 
A dessin is thus a metric ribbon graph with 
the same length given to each edge. We usually
consider this length to be $1$, so the
distance between $0$ and $1$ on $\bP^1$ is
measured as $\half$.

The concrete construction of \cite{MP1998}
gives a Belyi morphism to any given 
dessin. Thus the enumeration of clean Belyi 
morphism is equivalent to the enumeration of
ribbon graphs, where we assign length $1$ to 
every edge. The original interest of dessins 
lies in the fact that the 
absolute Galois group 
$Gal(\Qbar/\bQ)$ acts faithfully
on the set of dessins.

An alternative description of a Belyi morphism
is to use the dual graph 
\begin{equation}
\label{eq:dualgraph}
\check\Gam = b^{-1}([1,i\infty]),
\end{equation}
where 
$$
[1,i\infty]= \{1+iy\;|\;0\le y\le \infty\}\subset \bP^1
$$
is the vertical half-line on $\bP^1$
with real part $1$.
This time
the graph $\check\Gam$  has  $n$ labeled vertices
of degrees $(\mu_1,\dots,\mu_n)$.
Since we consider ribbon graphs in the context of
canonical cell-decomposition of the moduli
space $\cM_{g,n}$, we use the terminology
\emph{dessin} for a graph $\check\Gam$
dual to a ribbon graph $\Gam$. This distinction is
important, because when we count the number
of ribbon graphs, we consider the automorphism
of a graph that preserves each face, while 
the  automorphism group of the dual graph, i.e., a dessin,
preserves each vertex point-wise, but can 
permute faces.
In this dual picture, we define the number of
dessins with the automorphism factor by
\begin{equation}
\label{eq:dessin count}
D_{g,n}(\mu_1,\dots,\mu_n) = 
\sum_{\substack{\check\Gam \text{ dessin of}\\
\text{type } (g,n)}} \frac{1}{|\Aut_D
(\check\Gam)|},
\end{equation}
where $\check\Gam$ is a dessin of genus
$g$ with $n$ labeled vertices with prescribed degrees
$(\mu_1,\dots,\mu_n)$, and $\Aut_D
(\check\Gam)$ is the automorphism 
of $\check\Gam$
preserving each vertex point-wise.

Our theme is to find the spectral curve
of the theory by looking at the problem
for unstable curves $(g,n)=(0,1)$ and $(0,2)$. 
The dessins counted in
$D_{0,1}(\mu)$ for an integer
$\mu\in \bZ_+$ are spherical  graphs
that contain only one vertex of degree $\mu$. 
Since any edge of this graph has to start and end
with the same vertex, it is a loop, and thus $\mu$ is
even. So let us put $\mu=2m$. 
Each graph contributes with the weight 
$1/|\Aut_D (\check\Gam)|$
in the enumeration of the number $D_{0,1}(\mu)$.
This automorphism factor makes counting more
difficult.
Note that the automorphism group of 
a spherical dessin with a single vertex
is a subgroup of $\bZ/(2m)\bZ$ that 
preserves the graph. If we place an outgoing arrow
to one of the $2m$ half-edges incident to 
the unique vertex (see Figure~\ref{fig:ribbon01}),
 then we can kill the automorphism 
altogether. Since there are $2m$ choices of 
placing such an arrow, the number of arrowed graphs 
is $2m D_{0,1}(2m)$. This is now an integer.
By a simple bijection argument with the number of
arrangements of $m$ pairs of parentheses, we see that
\begin{equation}
\label{eq:Catalan}
2m D_{0,1}(2m) = C_m = \frac{1}{m+1}\binom{2m}{m},
\end{equation}
where $C_m$ is the $m$-th 
\emph{Catalan number}. We note that the Catalan
numbers appear in the same context of counting graphs
in \cite{HZ}.

\begin{figure}[htb]
\centerline{\epsfig{file=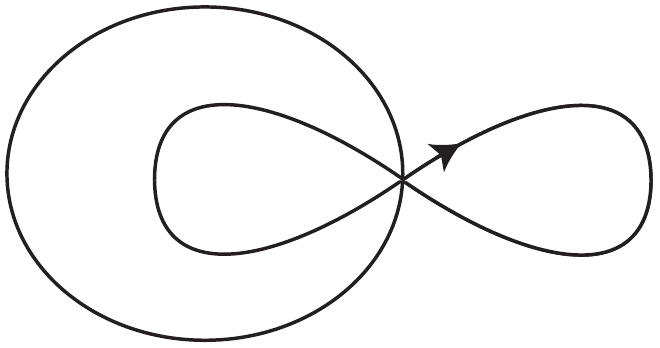, width=2in}}
\caption{An arrowed 
dessin d'enfant of genus $0$ with
one vertex.}
\label{fig:ribbon01}
\end{figure}

Define the Laplace transform of $D_{0,1}(\mu)$ by
\begin{equation}
\label{eq:F01Dtilde}
\widetilde{F}_{0,1}^D=\sum_{m=1} ^\infty D_{0,1}(2m) e^{-2mw}.
\end{equation}
Then the Eynard-Orantin differential
$$
\widetilde{W}_{0,1}^D = d \widetilde{F}_{0,1}^D = 
-\sum_{m=1} ^\infty 2m D_{0,1}(2m) e^{-2mw} dw
= -\sum_{m=1} ^\infty C_m e^{-2mw} dw
$$
is a generating function of the Catalan numbers.
Actually a better  choice is (see \cite{Kodama,KP})
\begin{equation}
\label{eq:z(x)}
z(x) = \sum_{m=0} ^\infty C_m \frac{1}{x^{2m+1}}
= \frac{1}{x}+\frac{1}{x^3}+\frac{2}{x^5}
+\frac{5}{x^7}
+\frac{14}{x^9}+\frac{42}{x^{11}}+\cdots.
\end{equation}
The radius of convergence of this infinite
Laurent series is $2$, hence the series 
converges absolutely for $|x|>2$.
The inverse function of $z=z(x)$ 
near $(x,z) = (\infty, 0)$ is given by
\begin{equation}
\label{eq:x=x(z)}
x = z+\frac{1}{z}.
\end{equation}
This can be easily seen by solving the quadratic
equation $z^2-xz+1=0$ with respect to $z$,
which is equivalent to the quadratic recursion
$$
C_{m+1} = \sum_{i+j = m} C_i \cdot C_j
$$
of Catalan numbers.
To take advantage of these
simple formulas, let us \emph{define} 
 \begin{equation}
 \label{eq:x=ew}
 x=e^w
 \end{equation}
 and allow the $m=0$ term in the Eynard-Orantin
 differential:
 \begin{equation}
 \label{eq:W01D}
 W_{0,1}^D = -\sum_{m=0} ^\infty C_m \;\frac{dx}
 {x^{2m+1}}.
 \end{equation}
 Accordingly the Laplace transform of $D_{0,1}(2m)$
 needs to be modified:
 \begin{equation}
 \label{eq:F01D}
 F_{0,1}^D = \sum_{m=1} ^\infty 
 D_{0,1}(2m)\; e^{-2mw}
 - w = \sum_{m=1} ^\infty D_{0,1}(2m)
\;  \frac{1}{x^{2m}}
 -\log x.
 \end{equation}
Although numerically $D_{0,1}(0) = 0$, 
its infinitesimal behavior is given by
$$
\lim_{m\rar 0}\frac{D_{0,1}(2m)}{x^{2m}}
= -\log x,
$$
which is consistent with 
$$
\lim_{m\rar 0}2mD_{0,1}(2m)
= C_0 = 1.
$$
From (\ref{eq:z(x)}) and (\ref{eq:W01D}), 
we obtain
\begin{equation}
\label{eq:W01D}
W_{0,1}^D = -z(x)\; dx.
\end{equation}
In light of (\ref{eq:ydx}), 
we have identified the spectral curve
for the  counting problem of dessins 
$D_{g,n}(\mu)$. It is given by
\begin{equation}
\label{eq:Dspectral}
\begin{cases}
x = z+\frac{1}{z}\\
y = -z
\end{cases}.
\end{equation}

To compute the recursion kernel of 
(\ref{eq:kernel}), we need to identify
$D_{0,2}(\mu_1,\mu_2)$ for the other
unstable geometry $(g,n) = (0,2)$. 
In  
the dual graph
picture, $D_{0,2}(\mu_1,\mu_2)$
counts the number of spherical  dessins $\check\Gam$
with
two vertices of degree $\mu_1$ and $\mu_2$,
counted with the weight of $1/|\Aut_D(\check\Gamma)|$.
The computation was done by Kodama and 
Pierce in \cite[Theorem~3.1]{KP}.
We also refer to a beautiful lecture by Kodama
\cite{Kodama}.

\begin{prop}[\cite{KP}]
\label{prop:N02}
The number of 
spherical dessinss $\check\Gam$
with
two vertices of degrees $j$ and $k$,
counted with the weight of $1/|\Aut_D(\check\Gamma)|$,
is given by the following formula.
\begin{equation}
\label{eq:D02}
D_{0,2}(\mu_1,\mu_2) = 
\begin{cases}
\frac{1}{2k}\;\binom{2k}{k}
\quad\qquad\qquad \mu_1=0, 
\mu_2=2k\ne 0\\
\\
\frac{1}{4}\;
\frac{1}{j+k}\;\binom{2j}{j}\binom{2k}{k}
\qquad \mu_1 = 2j \ne 0, \mu_2=2k\ne 0
\\
\\
\frac{1}{j+k+1}\;\binom{2j}{j}\binom{2k}{k}
\qquad \mu_1 = 2j+1 , \mu_2=2k+1
\end{cases}.
\end{equation}
All other cases $D_{0,2}(\mu_1,\mu_2) =0$.
Here the automorphism group 
$\Aut_D(\check\Gamma)$ is the topological 
graph automorphisms that fix each vertex, 
but may permute faces.
\end{prop}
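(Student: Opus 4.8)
The plan is to prove \eqref{eq:D02} by directly enumerating the two-vertex genus-$0$ dessins, taking as the only external input the one-vertex count $2mD_{0,1}(2m)=C_m$ of \eqref{eq:Catalan}. First dispose of the case $\mu_1=0$: the degree-zero vertex carries no half-edges, so such a dessin is naturally identified with a one-vertex genus-$0$ dessin of degree $\mu_2=2k$ together with a choice of one of its faces in which to place the marked point. By Euler's formula a one-vertex genus-$0$ dessin of degree $2k$ has $k+1$ faces, and for each such $\check\Gamma$ the weighted count of markings equals $\#\{\text{faces}\}/|\Aut_D(\check\Gamma)|$ by orbit--stabilizer; hence $D_{0,2}(0,2k)=\sum_{\check\Gamma}(k+1)/|\Aut_D(\check\Gamma)|=(k+1)D_{0,1}(2k)=\frac{1}{2k}\binom{2k}{k}$, which is the first line of \eqref{eq:D02}.

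Now assume $\mu_1,\mu_2\ge1$. Every edge of a two-vertex dessin is a loop at $v_1$, a loop at $v_2$, or a bridge $v_1v_2$; connectedness forces the number $b$ of bridges to lie in $\{1,2,\dots,\min(\mu_1,\mu_2)\}$ with $b\equiv\mu_1\equiv\mu_2\pmod2$, and then $v_i$ carries $\ell_i:=(\mu_i-b)/2$ loops. I would stratify $D_{0,2}(\mu_1,\mu_2)=\sum_b D_{0,2}^{(b)}(\mu_1,\mu_2)$ according to $b$. Since a ribbon-graph automorphism of a connected graph that fixes a single half-edge is the identity, distinguishing one of the $b$ bridges kills $\Aut_D$, and orbit--stabilizer gives $b\,D_{0,2}^{(b)}(\mu_1,\mu_2)=\#\{\text{genus-}0\text{ two-vertex dessins with }b\text{ bridges and a marked bridge}\}$. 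The geometric core is the claim that, in genus $0$, the $b$ bridges necessarily form the planar ``$b$-bond'' — their cyclic orders at $v_1$ and at $v_2$ are opposite — cutting the sphere into $b$ bigonal lunes, and that the loops at $v_1$ (resp.\ $v_2$) then have both half-edges inside a single lune, forming a non-crossing chord diagram there; conversely any such configuration is a genus-$0$ two-vertex dessin. A marked bridge linearly orders the $b$ lunes, so that
\[
b\,D_{0,2}^{(b)}(\mu_1,\mu_2)=\Bigl(\sum_{p_1+\cdots+p_b=\ell_1}\prod_{i=1}^b C_{p_i}\Bigr)\Bigl(\sum_{q_1+\cdots+q_b=\ell_2}\prod_{i=1}^b C_{q_i}\Bigr)=[u^{\ell_1}]y(u)^b\cdot[u^{\ell_2}]y(u)^b ,
\]
where $y(u)=\sum_{p\ge0}C_pu^p$ is the Catalan generating function (cf.\ \eqref{eq:z(x)}), satisfying $y=1+uy^2$. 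Lagrange inversion yields $[u^{\ell}]y(u)^b=\frac{b}{2\ell+b}\binom{2\ell+b}{\ell}=\frac{b}{\mu}\binom{\mu}{(\mu-b)/2}$ with $\mu=2\ell+b$, hence $D_{0,2}^{(b)}(\mu_1,\mu_2)=\frac{b}{\mu_1\mu_2}\binom{\mu_1}{(\mu_1-b)/2}\binom{\mu_2}{(\mu_2-b)/2}$.

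Summing over the admissible $b$ then reduces \eqref{eq:D02} to a binomial identity: in the case $\mu_1=2j,\ \mu_2=2k$, writing $b=2s$, one needs $\sum_{s\ge1}s\binom{2j}{j-s}\binom{2k}{k-s}=\frac{jk}{2(j+k)}\binom{2j}{j}\binom{2k}{k}$, and in the case $\mu_1=2j+1,\ \mu_2=2k+1$, writing $b=2s-1$, the analogue $\sum_{s\ge1}(2s-1)\binom{2j+1}{j+1-s}\binom{2k+1}{k+1-s}=\frac{(2j+1)(2k+1)}{j+k+1}\binom{2j}{j}\binom{2k}{k}$; each can be obtained from $\sum_s\binom{2n}{n-s}z^s=z^{-n}(1+z)^{2n}$ together with its first $z$-derivative (equivalently, by creative telescoping). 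The hardest step will be the planarity lemma — proving that genus $0$ forces the $b$-bond-with-non-crossing-loops structure and that this correspondence is a bijection — an elementary but somewhat delicate ribbon-graph computation; secondary difficulties are keeping the orbit--stabilizer bookkeeping honest when passing from the marked-bridge count back to the $\Aut_D$-weighted $D_{0,2}^{(b)}$, and evaluating the two binomial sums, which genuinely behave differently in the even/even and odd/odd regimes of \eqref{eq:D02}.
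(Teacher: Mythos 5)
Your proposal is correct, but it is a genuinely different route from what the paper does: the paper offers no proof of this proposition at all, deferring entirely to Kodama--Pierce \cite{KP}, where the formula is extracted from the dispersionless Toda hierarchy / Hermitian matrix model, and the paper's only argument is the remark handling the degenerate $\mu_1=0$ case by counting the $k+1$ faces --- which coincides exactly with your first step. Your direct enumeration (stratify by the number $b$ of bridges, kill $\Aut_D$ by marking a bridge, observe that genus $0$ forces the $b$-bond with reversed cyclic orders and non-crossing loop systems in each lune, and apply $[u^{\ell}]y(u)^b=\frac{b}{2\ell+b}\binom{2\ell+b}{\ell}$) is sound: I checked that your stratum formula $D_{0,2}^{(b)}(\mu_1,\mu_2)=\frac{b}{\mu_1\mu_2}\binom{\mu_1}{(\mu_1-b)/2}\binom{\mu_2}{(\mu_2-b)/2}$ reproduces the correct values (e.g.\ $D_{0,2}(2,2)=\tfrac12$ from the double edge with its $\bZ/2$ symmetry, $D_{0,2}(1,1)=1$, $D_{0,2}(3,1)=1$), and both closing binomial identities hold in all the cases I tested, so the reduction is consistent with \eqref{eq:D02}. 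What your approach buys is a self-contained combinatorial proof in the same spirit as the edge-contraction argument of Theorem~\ref{thm:Dgn}, rather than an appeal to integrable-systems machinery; the cost is the two pieces you already flagged, namely the planarity lemma (reversed cyclic orders at the two endpoints of the $b$-bond, loops confined to single lunes and non-crossing there --- here one should also note explicitly that a loop at $v_1$ inside a lune cannot enclose $v_2$ or any of its loops, which is what makes the two Catalan factors independent) and the two half-range binomial sums, which are classical but do need a reflection or telescoping argument since the naive bilateral sum $\sum_{s\in\bZ}s\binom{2j}{j-s}\binom{2k}{k-s}$ vanishes by antisymmetry. One small bookkeeping point to make explicit: marking a bridge trivializes $\Aut_D(\check\Gamma)$ because an automorphism fixing both vertices cannot flip the bridge, hence fixes a half-edge of a connected ribbon graph and is the identity.
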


\begin{rem}
The first case is irregular. For $\mu_1=0$, the
second vertex has an even degree, and hence 
we have $C_k/(2k)$ graphs. Note that this graph
has $k+1$ faces due to Euler's formula $2=1-k+(k+1)$.
The degree $0$ vertex has to be placed in one of these
faces, which makes the total number of graphs 
$$
\frac{k+1}{2k}\;C_k = \frac{1}{2k}\;\binom{2k}{k}.
$$
However, we are counting
 only connected graphs. Hence degree
$0$ vertices are not 
allowed in our counting.
\end{rem}

In general the number of dessins satisfies the
following:

\begin{thm}
\label{thm:Dgn}
For $g\ge 0$ and $n\ge 1$ 
subject to $2g-2+n\ge 0$, the number of 
dessins (\ref{eq:dessin count})
satisfies a recursion equation
\begin{multline}
\label{eq:Dgn recursion}
\mu_1 D_{g,n}(\mu_1,\dots,\mu_n)
=
\sum_{j=2}^n (\mu_1+\mu_j-2)
D_{g,n-1}\big(\mu_1+\mu_j-2, \mu_{[n]\setminus
\{1,j\}}\big)
\\
+
\sum_{\a+\b=\mu_1-2}\a\b
\Bigg[
D_{g-1,n+1}(\a,\b,\mu_{[n]\setminus\{1\}})
+
\sum_{\substack{g_1+g_2=g\\
I\sqcup J=\{2,\dots,n\}}}
D_{g_1,|I|+1}(\a,\mu_I)D_{g_2,|J|+1}(\b,\mu_J)
\Bigg],
\end{multline}
where $\mu_I=(\mu_i)_{i\in I}$
for a subset $I\subset [n]=\{1,2,\dots,n\}$.
The last sum is over all 
partitions of the genus
$g$ and the index set  $\{2,3,\dots,n\}$ into
two pieces. 
\end{thm}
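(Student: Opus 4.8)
The plan is to prove the recursion \eqref{eq:Dgn recursion} by a direct combinatorial edge-contraction argument on dessins, focusing on the vertex $v_1$ of degree $\mu_1$. Think of a dessin $\check\Gamma$ as the dual picture of a clean Belyi morphism, i.e.\ as a ribbon graph with $n$ labeled vertices of degrees $(\mu_1,\dots,\mu_n)$. The factor $\mu_1$ on the left-hand side is the key: as in the $(0,1)$ computation leading to \eqref{eq:Catalan}, we rigidify the enumeration by placing an outgoing arrow on one of the $\mu_1$ half-edges incident to $v_1$. This kills the part of $\Aut_D(\check\Gamma)$ that can act nontrivially near $v_1$, so $\mu_1 D_{g,n}(\mu_1,\dots,\mu_n)$ counts \emph{arrowed} dessins honestly (with the remaining automorphisms, which fix $v_1$ and its arrow, still weighted by $1/|\Aut|$). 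First I would set up this arrowed-dessin count carefully, including the degenerate boundary cases coming from Proposition~\ref{prop:N02}.

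Next I would analyze the edge $e$ carrying the arrowed half-edge at $v_1$ and trace where its \emph{other} end lands; this is the standard ribbon-graph ``remove an edge'' trichotomy. \textbf{Case 1:} the other end of $e$ is a different vertex $v_j$, $j\in\{2,\dots,n\}$. Contracting $e$ merges $v_1$ and $v_j$ into a single vertex whose degree, after accounting for the two half-edges of $e$ that disappear, is $\mu_1+\mu_j-2$; the cyclic orders splice together, the topological type becomes $(g,n-1)$, and the arrow is transported to a canonically determined half-edge of the merged vertex. The number of ways to reconstruct the original arrowed dessin from a dessin counted by $D_{g,n-1}(\mu_1+\mu_j-2,\dots)$ is exactly the weight $(\mu_1+\mu_j-2)$, giving the first sum. \textbf{Case 2:} both ends of $e$ are at $v_1$, so $e$ is a loop; cutting the loop splits $v_1$ into two half-edge groups of sizes $\alpha$ and $\beta$ with $\alpha+\beta=\mu_1-2$, and this either (2a) disconnects the surface, lowering genus additively and partitioning the remaining vertices as $I\sqcup J$ — this produces the product term $\sum_{g_1+g_2=g,\,I\sqcup J}D_{g_1,|I|+1}(\alpha,\mu_I)D_{g_2,|J|+1}(\beta,\mu_J)$ — or (2b) stays connected while dropping the genus by one, producing $D_{g-1,n+1}(\alpha,\beta,\mu_{[n]\setminus\{1\}})$. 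The $\alpha\beta$ weights again come from counting the reconstruction data (choice of arrowed half-edge on each of the two new vertices). Summing the three cases reproduces the right-hand side of \eqref{eq:Dgn recursion}.

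The main obstacle I anticipate is bookkeeping the automorphism weights consistently across the bijection — in particular making sure that after arrowing $v_1$ the passage to the contracted/cut dessin is genuinely a bijection onto arrowed dessins of the smaller type (not merely a surjection), and that the residual automorphism groups $\Aut_D$ on both sides match so the $1/|\Aut|$ factors transform correctly. The degenerate pieces require extra care: in Case 1 one can produce a vertex of degree $\mu_1+\mu_j-2=0$ when $\mu_1=\mu_j=1$, and in Case 2 one of $\alpha,\beta$ can be $0$; these must be matched against the irregular $\mu_i=0$ entries of \eqref{eq:D02} (and the connectivity caveat in the Remark following Proposition~\ref{prop:N02}), which is precisely why the stated range is $2g-2+n\ge 0$ rather than $>0$. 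A clean way to handle all of this uniformly is to phrase the argument in terms of the Belyi morphism itself — the arrow corresponds to a choice of sheet of $b$ over a point near $\infty$, and Case 1 vs.\ Case 2 corresponds to whether two chosen poles lie in the same connected component after removing a suitable arc — and to check the weights via the orbit-counting (Burnside) relationship between labeled-with-arrow counts and the $1/|\Aut_D|$-weighted counts. Once the bijection and the weight computation are in place, the identity \eqref{eq:Dgn recursion} follows term by term.
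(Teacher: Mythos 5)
Your proposal is correct and follows essentially the same route as the paper: rigidify by placing an arrow on one of the $\mu_1$ half-edges at vertex $1$, then contract the arrowed edge if it joins vertex $1$ to some $v_j$ (giving the weight $\mu_1+\mu_j-2$ for reconstruction) or remove it and split the vertex if it is a loop (giving the weight $\alpha\beta$, with the connected and disconnected subcases producing the $D_{g-1,n+1}$ and product terms). The degenerate situations you worry about are in fact harmless, since the weights $(\mu_1+\mu_j-2)$ and $\alpha\beta$ vanish exactly when a degree-$0$ vertex would be created.
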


\begin{rem}
Note that when $g_1=0$ and $I=\emptyset$, 
 $D_{g,n}$ appears in the right-hand side
of (\ref{eq:Dgn recursion}). Therefore, this is
an equation of the number of dessins, not a
recursion formula.
\end{rem}

\begin{proof}
Consider the collection  of genus $g$ dessins 
with $n$  vertices labeled by the 
index set $[n]=\{1,2,\dots,n\}$  and of degrees
$(\mu_1,\dots,\mu_n)$. 
The left-hand side of (\ref{eq:Dgn recursion}) is 
the number of dessins with an outward arrow
placed on one of the incident edges at the 
vertex $1$. The equation is based on the
removal of this edge. 
There are two cases. 
\begin{case}
The arrowed edge connects the vertex $1$ and
vertex $j>1$.
We then remove the edge and 
put the two vertices $1$ and $j$ together
as shown in  Figure~\ref{fig:case1}. 
This operation is better described as shrinking 
the arrowed edge to a point. 
The resulting dessin has one less vertices, but the
genus is the same as before. The degree of the
newly created vertex is $\mu_1+\mu_j-2$, 
while the degrees of all other vertices are unaffected.

\begin{figure}[htb]
\centerline{\epsfig{file=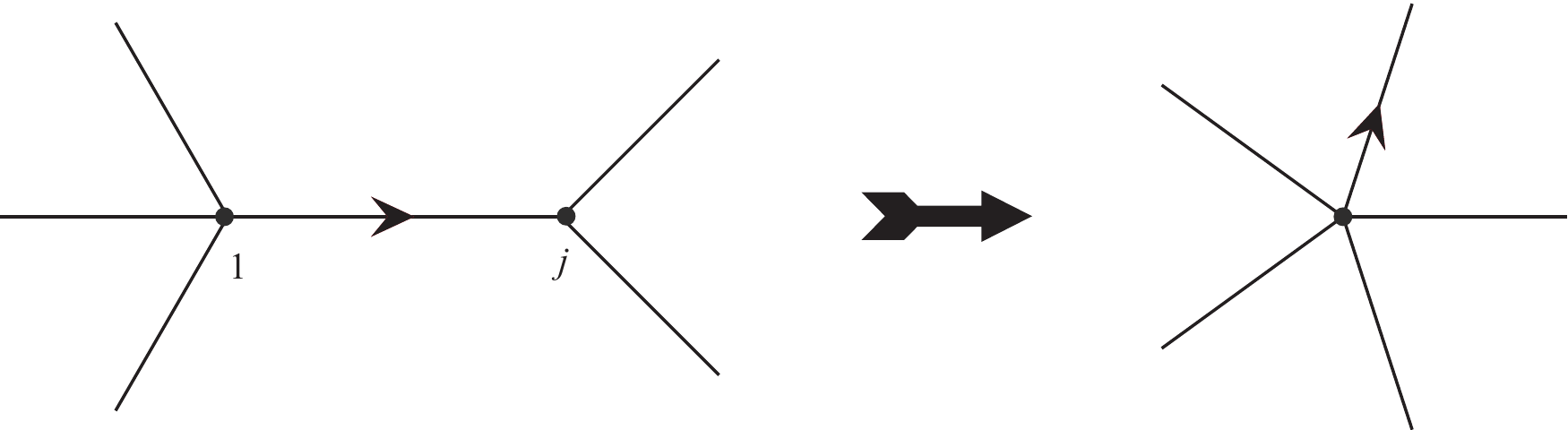, height=1in}}
\caption{The operation that shrinks
 the arrowed edge to a point
and joins two vertices labeled by $1$ and $j$
together.}
\label{fig:case1}
\end{figure}

To make the bijection argument, we
need to be able to reconstruct the original
dessin from the new one. Since both $\mu_1$ and
$\mu_j$ are given as the input value, we have to specify
which edges go to vertex $1$ and which go to $j$
when we separate the vertex of degree
$\mu_1+\mu_j-2$. 
For this purpose, 
what we need is a marker on one of the
incident edges. We group the  marked edge
and $\mu_i-2$ edges
following it according to the cyclic order. The rest of
the $\mu_j-1$ incident edges are also grouped.  
Then we insert an edge and separate the vertex into
two vertices, $1$ and $j$, 
so that the first group of edges are
incident to vertex $1$ and the second group is incident
to $j$, honoring their
cyclic orders (see Figure~\ref{fig:case1}). The contribution
from this case is therefore
$$
\sum_{j=2}^n (\mu_1+\mu_j-2)
D_{g,n-1}\big(\mu_1+\mu_j-2, \mu_{[n]\setminus
\{1,j\}}\big).
$$
\end{case}

\begin{case}
The arrowed edge forms a loop that is attached to 
vertex $1$. We remove this loop from the 
dessin, and separate the vertex into two vertices.
The loop classifies all incident 
half-edges, except for the loop itself,
into two groups: the ones that follow the 
arrowed half-edge in the cyclic order but 
before the incoming end of the loop, and all 
others (see Figure~\ref{fig:case2}). Let $\a$
be the number of half-edges in the first group, 
and $\b$ the rest. Then $\a+\b=\mu_1-2$, 
and we have created two vertices of degrees 
$\a$ and $\b$. 

To recover the original dessin from the 
new one, we need to mark a half-edge
from each vertex so that we can put the
loop back to the original place.
The number of choices of these markings is $\a\b$.

\begin{figure}[htb]
\centerline{\epsfig{file=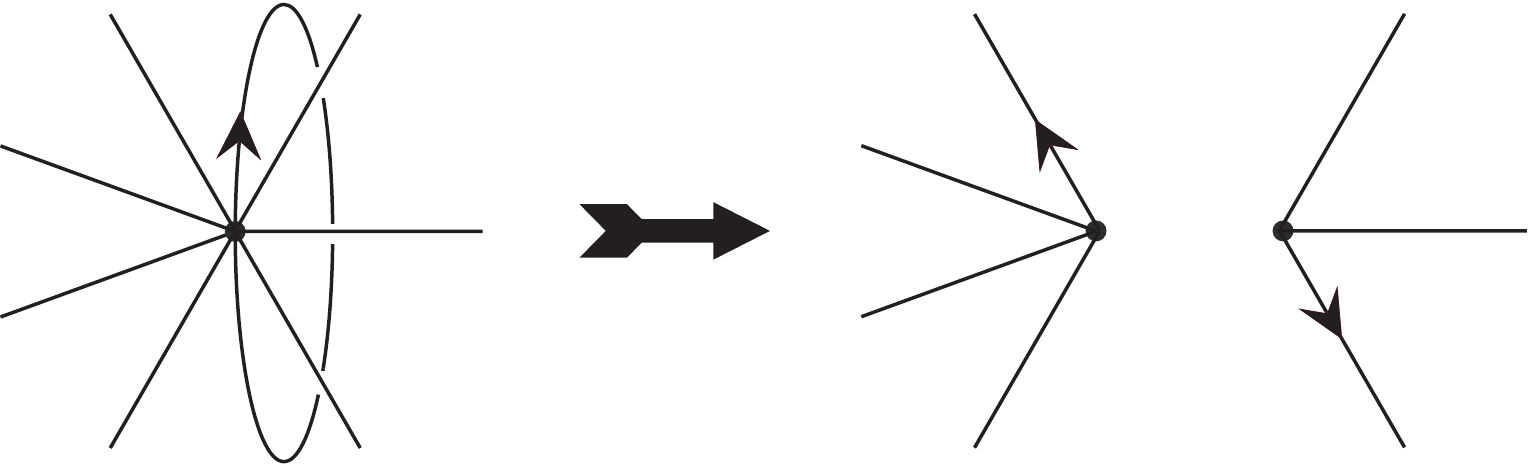, height=1in}}
\caption{The operation that removes a loop, and 
separates the incident vertex into two vertices.}
\label{fig:case2}
\end{figure}

The operation of the
removal of the loop and the separation of the 
vertex into two vertices certainly
increases the number of vertices
from $n$ to $n+1$. This operation also affects
the genus of the dessin. If the resulting dessin is 
 connected, then $g$ goes down to $g-1$.
If the result is the disjoint union of two dessins
of genera $g_1$ and $g_2$, then we have 
$g=g_1+g_2$. Altogether the 
contribution from this case is
$$
\sum_{\a+\b=\mu_1-2}\a\b
\Bigg[
D_{g-1,n+1}(\a,\b,\mu_{[n]\setminus\{1\}})
+
\sum_{\substack{g_1+g_2=g\\
I\sqcup J=\{2,\dots,n\}}}
D_{g_1,|I|+1}(\a,\mu_I)D_{g_2,|J|+1}(\b,\mu_J)
\Bigg].
$$
Note that the outward arrow we place
defines the two groups of incident half-edges
uniquely, since one is after and the other 
before the arrowed half-edge according to the
cyclic order. Thus we do not need to symmetrize
$\a$ and $\b$. Indeed, if the arrow is placed
in the other end of the loop, then $\a$ and $\b$ 
are interchanged.
\end{case}
The right-hand side of
the equation  (\ref{eq:Dgn recursion})
is the sum of the above two contributions.
\end{proof}

\begin{rem}
The equation (\ref{eq:Dgn recursion}) 
is considerably simpler, compared to
the recursion formula
for the number of ribbon graphs
with integral edge lengths that is
proved in \cite[Theorem~3.3]{CMS}.
The edge removal operation of \cite{CMS} is
the dual operation of the edge shrinking operations
of Case~1 and Case~2 above, and
the placement of an arrow corresponds
to the \emph{ciliation}
of \cite{CMS}. In the dual picture, the graphs enumerated
in  \cite{CMS} are 
more restrictive than arbitrary clean dessins,
which makes the equation more
complicated. We also note that
\cite[Theorem~3.3]{CMS} is a recursion 
formula, not just a mere relation like 
what we have in (\ref{eq:Dgn recursion}).
In this regard, 
(\ref{eq:Dgn recursion}) 
is indeed similar to
 the cut-and-join
equation (\ref{eq:caj}) of \cite{GJ,V}. We will 
come back to this point in Section~\ref{sect:Hurwitz}.
\end{rem}

The relation (\ref{eq:Dgn recursion}) 
becomes an effective recursion formula
after taking the Laplace transform.

\section{The Laplace transform of the 
number of dessins and ribbon graphs}
\label{sect:LT}

In this section we derive the 
Eynard-Orantin recursion formula for
the generating functions of the number
of dessins. The key technique is the 
Laplace transform.

Since the projection $x=z+1/z$ 
 of the spectral curve
to the $x$-coordinate plane has two ramification 
points $z=\pm 1$, it is natural to introduce a
coordinate that has these ramification points 
at $0$ and $\infty$. So we define
\begin{equation}
\label{eq:z(t)}
z=\frac{t+1}{t-1}.
\end{equation}

\begin{prop}
\label{prop:F02D}
The Laplace transform of $D_{0,2}(\mu_1,\mu_2)$
is given by
\begin{multline}
\label{eq:F02D}
F_{0,2} ^D (t_1,t_2)
\overset{\rm{def}}{=}
\sum_{\mu_1,\mu_2 > 0}
D_{0,2}(\mu_1,\mu_2)\;e^{-(\mu_1 w_1+\mu_2w_2)}
=
-\log\big(1-z(x_1) z(x_2)\big)
\\
=\log(t_1-1) +\log(t_2-1) -\log(-2(t_1+t_2)),
\end{multline}
where $z(x)$ is the generating function of
the Catalan numbers (\ref{eq:z(x)}),
and the variables
 $t,w,x,z$ are related by (\ref{eq:x=ew}),
(\ref{eq:Dspectral}), and (\ref{eq:z(t)}).
We then  have 
\begin{equation}
\label{eq:W02D}
W_{0,2}^D (t_1,t_2) 
= d_1d_2 F_{0,2} ^D(t_1,t_2)
= \frac{dt_1\cdot dt_2}{(t_1-t_2)^2}
-\frac{dx_1\cdot dx_2}{(x_1-x_2)^2}
=\frac{dt_1\cdot dt_2}{(t_1+t_2)^2}.
\end{equation}
\end{prop}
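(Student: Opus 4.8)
The plan is to verify the three claimed equalities in turn. First I would establish the generating-function identity
\[
F_{0,2}^D(t_1,t_2)=\sum_{\mu_1,\mu_2>0}D_{0,2}(\mu_1,\mu_2)\,e^{-(\mu_1w_1+\mu_2w_2)}=-\log\bigl(1-z(x_1)z(x_2)\bigr)
\]
by plugging the explicit formula for $D_{0,2}(\mu_1,\mu_2)$ from Proposition~3.5 into the double sum. Since $D_{0,2}$ vanishes unless $\mu_1,\mu_2$ have the same parity, the sum splits into an ``even--even'' part and an ``odd--odd'' part. Recalling $e^{-w_j}=1/x_j$ and $z(x)=\sum_{m\ge 0}C_m x^{-2m-1}$, the odd--odd part is
\[
\sum_{j,k\ge 0}\frac{1}{j+k+1}\binom{2j}{j}\binom{2k}{k}\frac{1}{x_1^{2j+1}x_2^{2k+1}},
\]
which I would recognize as $\int_0^1 z(x_1/\sqrt{s})z(x_2/\sqrt{s})\,ds$-type expression, or more directly match against the Taylor coefficients of $-\log(1-uv)$ after writing $z(x)=\tfrac1x\sum_m C_m x^{-2m}$ and using $C_m=\frac{1}{m+1}\binom{2m}{m}$; the key combinatorial input is that $\sum_{j+k=m}\binom{2j}{j}\binom{2k}{k}=4^m$ and the beta-function identity $\frac{1}{j+k+1}\binom{2j}{j}\binom{2k}{k}$ gives exactly the coefficient of $u^{j+\frac12}v^{k+\frac12}$ in the expansion of $-\log(1-\sqrt{uv})$ symmetrized appropriately. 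The even--even part $\frac14\sum_{j,k\ge1}\frac{1}{j+k}\binom{2j}{j}\binom{2k}{k}x_1^{-2j}x_2^{-2k}$ is handled the same way via $-\log(1-uv)=\sum_{n\ge1}\frac{(uv)^n}{n}$ together with the convolution $\sum_{j+k=n}\binom{2j}{j}\binom{2k}{k}=4^n$; care must be taken with the boundary terms $j=0$ or $k=0$ (which are excluded in $D_{0,2}$) versus those included in $z(x_1)z(x_2)$, and this bookkeeping is where I expect the main fiddly obstacle to lie.

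Next I would convert to the $t$-coordinates. From $x=z+1/z$ and $z=\frac{t+1}{t-1}$ one computes $1-z(x_1)z(x_2)$ directly: since $z(x_i)$ is the branch of $z^2-x_iz+1=0$ with $z\to0$ as $x\to\infty$, and since $z=\frac{t+1}{t-1}$ maps the ramification points $z=\pm1$ to $t=0,\infty$, I would substitute and simplify
\[
1-z_1z_2=1-\frac{(t_1+1)(t_2+1)}{(t_1-1)(t_2-1)}=\frac{(t_1-1)(t_2-1)-(t_1+1)(t_2+1)}{(t_1-1)(t_2-1)}=\frac{-2(t_1+t_2)}{(t_1-1)(t_2-1)},
\]
which upon taking $-\log$ yields exactly the claimed expression $\log(t_1-1)+\log(t_2-1)-\log(-2(t_1+t_2))$.

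Finally, for the differential form, I would apply $d_1d_2$ to this logarithmic expression. Only the cross term survives: $d_1d_2\log(t_1-1)=0$, $d_1d_2\log(t_2-1)=0$, and
\[
d_1d_2\bigl(-\log(-2(t_1+t_2))\bigr)=-d_1\!\left(\frac{-dt_2}{t_1+t_2}\right)=\frac{dt_1\,dt_2}{(t_1+t_2)^2},
\]
giving $W_{0,2}^D(t_1,t_2)=\frac{dt_1\,dt_2}{(t_1+t_2)^2}$. To confirm the middle equality $W_{0,2}^D=\frac{dt_1dt_2}{(t_1-t_2)^2}-\frac{dx_1dx_2}{(x_1-x_2)^2}$ — i.e., that this is the standard normalized bifundamental of Definition~2.1(2) — I would compute $\frac{dx_1dx_2}{(x_1-x_2)^2}$ in the $t$-coordinates using $x=z+1/z=\frac{t^2+1}{t^2-1}\cdot\frac{?}{}$ (more cleanly: $x_i-x_j$ factors through $(z_i-z_j)(1-1/(z_iz_j))$), and check the elementary partial-fraction identity $\frac{1}{(t_1+t_2)^2}=\frac{1}{(t_1-t_2)^2}-[\text{pullback term}]$; this is a routine rational-function verification. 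The only genuine obstacle is the first step — matching the hypergeometric-type sums coming from the binomial coefficients in Proposition~3.5 against the logarithm — and there the parity split plus the beta-integral representation $\frac{1}{j+k+1}=\int_0^1 s^{j+k}\,ds$ (and $\frac{1}{j+k}=\int_0^1 s^{j+k-1}ds$) reduces everything to the single generating-function identity $\sum_{m\ge0}\bigl(\sum_{j+k=m}\binom{2j}{j}\binom{2k}{k}u^j v^k\bigr)$ evaluated on the diagonal, which is classical.
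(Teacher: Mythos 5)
Your overall architecture is right, and the second and third steps are exactly what the paper does: the identity $1-z_1z_2=\frac{-2(t_1+t_2)}{(t_1-1)(t_2-1)}$ and the computation $d_1d_2\bigl(-\log(-2(t_1+t_2))\bigr)=\frac{dt_1\,dt_2}{(t_1+t_2)^2}$ are both correct, and the middle equality in (\ref{eq:W02D}) follows from $x_1-x_2=(z_1-z_2)\frac{z_1z_2-1}{z_1z_2}$ together with the M\"obius invariance of $\frac{dz_1\,dz_2}{(z_1-z_2)^2}$, as you indicate. The difference is in the first (and only hard) step. The paper does not match coefficients: it applies the Euler vector field $x_1\frac{\partial}{\partial x_1}+x_2\frac{\partial}{\partial x_2}$ to both the double sum and to $-\log(1-z_1z_2)$, observes that the operator kills the denominators $\frac{1}{j+k}$ and $\frac{1}{j+k+1}$ and reduces the sum to $\frac12(x_1\xi_0(x_1)-1)(x_2\xi_0(x_2)-1)+2\xi_0(x_1)\xi_0(x_2)$ with $\xi_0(x)=-\frac{z}{z^2-1}$, checks that both sides give $\frac{2z_1z_2(1+z_1z_2)}{(z_1^2-1)(z_2^2-1)}$, and concludes by uniqueness of the holomorphic solution of the Euler equation with matching initial terms. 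Your beta-integral representation $\frac{1}{j+k+1}=\int_0^1 s^{j+k}\,ds$ is precisely the integral form of this trick and would work (the $j,k\ge1$ restriction in the even--even sum just subtracts the boundary terms you already flagged), so that route is a legitimate, essentially equivalent alternative.

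The one genuine misstep is your identification of the ``key combinatorial input.'' The target is $-\log\bigl(1-z(x_1)z(x_2)\bigr)$ with $z$ the full Catalan series, not $-\log(1-uv)$ for monomials $u,v$; expanding $\sum_{n\ge1}\frac{(z_1z_2)^n}{n}$ in powers of $1/x_1,1/x_2$ requires the coefficients of \emph{all} powers $z^n=\sum_m\frac{n}{2m+n}\binom{2m+n}{m}x^{-2m-n}$ (ballot numbers), and the resulting identity to be proved is of the form $\sum_{p\ge1}p\binom{2a}{a-p}\binom{2b}{b-p}=\frac{ab}{2(a+b)}\binom{2a}{a}\binom{2b}{b}$, not the central-binomial convolution $\sum_{j+k=m}\binom{2j}{j}\binom{2k}{k}=4^m$ you cite. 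So the direct coefficient-matching sub-route as you describe it would stall; you should commit to the beta-integral (equivalently, Euler-operator) version of the argument, which avoids expanding the logarithm entirely.
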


\begin{proof}
In terms of $x=e^w$, the Laplace transform
(\ref{eq:F02D}) is given by
\begin{multline}
\label{eq:D02inx}
\sum_{\mu_1,\mu_2>0}
D_{0,2}(\mu_1,\mu_2)\;e^{-(\mu_1 w_1+\mu_2w_2)}
\\
=
\frac{1}{4}\sum_{j,k=1}^\infty\frac{1}{j+k}
\binom{2j}{j}\binom{2k}{k}
\frac{1}{x_1 ^{2j}}\;\frac{1}{x_2^{2k}}
+
\sum_{j,k=0}^\infty\frac{1}{j+k+1}
\binom{2j}{j}\binom{2k}{k}
\frac{1}{x_1 ^{2j+1}}\;\frac{1}{x_2^{2k+1}}.
\end{multline}
Since 
\begin{equation}
\label{eq:dx and dz}
dx = \left(1-\frac{1}{z^2}\right)dz,
\end{equation}
we have
\begin{equation}
\label{eq:dx and dz}
x\frac{d}{dx} 
= 
\frac{
z+\frac{1}{z}}{1-\frac{1}{z^2}}\;
\frac{d}{dz}
=
\frac{z(z^2+1)}{z^2-1}\;\frac{d}{dz}.
\end{equation}
To make the computation simpler,
let us introduce
\begin{equation}
\label{eq:xi0D}
\xi_0(x)=\sum_{m=0}^\infty \binom{2m}{m}
\frac{1}{x^{2m+1}}.
\end{equation}
This will also be used in Section~\ref{sect:GWP1}.
In terms of $z$ and $t$ we have
\begin{multline}
\label{eq:xi0inz}
\xi_0(x)=
\half \left(
1-x\frac{d}{dx}
\right)
\sum_{m=0}^\infty \frac{1}{m+1}
\binom{2m}{m}\frac{1}{x^{2m+1}}
\\
=
\half
\left(1-\frac{z(z^2+1)}{z^2-1}\;\frac{d}{dz}\right)z
=-\frac{z}{z^2-1}
=-\frac{t^2-1}{4t}.
\end{multline}
Note that
\begin{multline*}
-\left(
x_1\frac{d}{dx_1}+x_2\frac{d}{dx_2}
\right)
\Bigg(
\frac{1}{4}\sum_{j,k=1}^\infty\frac{1}{j+k}
\binom{2j}{j}\binom{2k}{k}
\frac{1}{x_1 ^{2j}}\;\frac{1}{x_2^{2k}}
\\
+
\sum_{j,k=0}^\infty\frac{1}{j+k+1}
\binom{2j}{j}\binom{2k}{k}
\frac{1}{x_1 ^{2j+1}}\;\frac{1}{x_2^{2k+1}}
\Bigg)
\\
=
\half \;(x_1\xi_0(x_1)-1)(x_2\xi_0(x_2)-1)
+2{\xi_0(x_1)}
{\xi_0(x_2)}
\\
=
2z_1z_2\frac{1+z_1z_2}{(z_1^1-1)(z_2^2-1)}
\\
=-
\left(
\frac{z_1(z_1^2+1)}{z_1^2-1}
\frac{d}{dz_1}
+
\frac{z_2(z_2^2+1)}{z_2^2-1}
\frac{d}{dz_2}
\right)
\left(-\log(1-z_1z_2)\right).
\end{multline*}
In other words, we have a partial differential 
equation
$$
\left(
x_1\frac{d}{dx_1}+x_2\frac{d}{dx_2}
\right)
\left(
F_{0,2}^D(t_1,t_2)+\log(1-z_1z_2)
\right) = 0
$$
for a holomorphic function in
$x_1$ and $x_2$ defined for 
$|x_1|>\!\!>2$ and $|x_2|>\!\!>2$.
Since the first few terms of the 
Laurent expansions of
$-\log\big(1-z(x_1)z(x_2)\big)$ using
(\ref{eq:z(x)}) agree with the 
first few terms of the 
sums of (\ref{eq:D02inx}), we have the
initial condition for the above 
differential equation. By the uniqueness of the
solution to the Euler differential equation
with the initial condition, we obtain (\ref{eq:F02D}).
Equation (\ref{eq:W02D}) follows
from differentiation of (\ref{eq:F02D}).
\end{proof}

In terms of the $t$-coordinate of (\ref{eq:z(t)}),
the  Galois conjugate of $t\in \Sigma$ 
under the projection $x:\Sigma\lrar \bC$ is $-t$.
Therefore, the recursion kernel for  counting of dessins
is given by
\begin{multline}
\label{eq:Dkernel}
K^D(t,t_1) =
\half\; \frac{\int_t ^{-t} W_{0,2}^D(\cdot,t_1)}
{W_{0,1}^D(-t)-W_{0,1}^D(t)}
= \half \; 
\left(
\frac{1}{t+t_1}+\frac{1}{t-t_1}
\right)
\frac{1}{\frac{t+1}{t-1}-\frac{t-1}{t+1}}
\cdot \frac{1}{dx}\cdot dt_1\\
=
-\frac{1}{64} \; 
\left(
\frac{1}{t+t_1}+\frac{1}{t-t_1}
\right)
\frac{(t^2-1)^3}{t^2}\cdot \frac{1}{dt}\cdot dt_1.
\end{multline}
One of the first two stable cases (\ref{eq:W11}) gives us
\begin{multline}
\label{eq:W11D}
W_{1,1}^D (t_1) =\frac{1}{2\pi i}
\int_{\gam}K^D(t,t_1) 
\left[W_{0,2}^D (t, -t)
+\frac{dx\cdot dx_1}{(x-x_1)^2}
\right]
\\
=
-\frac{1}{2\pi i}
\int_{\gam}K^D(t,t_1) 
\frac{dt\cdot dt}{4 t^2}
=
-\frac{1}{128}\; \frac{(t_1^2-1)^3}{t_1^4}dt_1,
\end{multline}
where the integration contour $\gam$ 
consists of two concentric
circles of a small radius  and 
a large radius
centered around $t=0$, with the inner circle 
positively  and the outer circle  
negatively oriented (Figure~\ref{fig:contourD}).
The $(g,n) = (0,3)$ case is given by 
\begin{multline}
\label{eq:W03D}
W_{0,3}^D(t_1,t_2,t_3)
=\frac{1}{2\pi i} 
\int_{\gam}
\frac{W_{0,2}^D(t,t_1)W_{0,2}^D
(t,t_2)W_{0,2}^D(t,t_3)}
{dx(t)\cdot dy(t)}
\\
=
-\frac{1}{16}
\left[
\frac{1}{2\pi i}
\int_{\gam}
\frac{(t^2-1)^2 (t-1)^2}
{(t+t_1)^2(t+t_2)^2(t+t_3)^2}
\cdot \frac{dt}{t}
\right]
dt_1dt_2dt_3
\\
=
-\frac{1}{16}\left(
1-\frac{1}{t_1^2\;t_2^2\;t_3^2}
\right)
dt_1dt_2dt_3.
\end{multline}

\begin{rem}
The general formula (\ref{eq:EO}) for
$(g,n)=(0,3)$ also gives the 
same answer. This is because
$W_{0,2}^D$ acts as the Cauchy
differentiation kernel. 
\begin{multline*}
W_{0,3}^D(t_1,t_2,t_3) 
=
\frac{1}{2\pi i}
\int_{\gam}K^D(t,t_1) 
\bigg[
W_{0,2}^D(t,t_2)W_{0,2}^D(-t,t_3)
+W_{0,2}^D(t,t_3)W_{0,2}^D(-t,t_2)
\bigg]
\\
=
\frac{1}{64} 
\left[
\frac{1}{2\pi i}
\int_{\gam}
\left(
\frac{1}{t+t_1}+\frac{1}{t-t_1}
\right)
\frac{(t^2-1)^3}{t^2}
\left(
\frac{1}{(t+t_2)^2(t-t_3)^2}
+\frac{1}{(t+t_3)^2(t-t_2)^2}
\right)dt
\right]
\\
\cdot
dt_1dt_2dt_3
\\
=
\Bigg[
-\frac{1}{32}
\frac{(t_1^2-1)^3}{t_1^2}
\left(
\frac{1}{(t_1+t_2)^2(t_1-t_3)^2}
+\frac{1}{(t_1+t_3)^2(t_1-t_2)^2}
\right)
\\
-\frac{1}{16}
\frac{\partial}{\partial t_2}
\left(
\frac{t_2}{t_2^2-t_1^2}\; \frac{(t_2^2-1)^3}{t_2^2}
\;\frac{1}{(t_2+t_3)^2}
\right)
\\
-\frac{1}{16}
\frac{\partial}{\partial t_3}
\left(
\frac{t_3}{t_3^2-t_1^2}\; \frac{(t_3^2-1)^3}{t_3^2}
\;\frac{1}{(t_2+t_3)^2}
\right)
\Bigg]
dt_1dt_2dt_3
=
-\frac{1}{16}\left(
1-\frac{1}{t_1^2\;t_2^2\;t_3^2}
\right)
dt_1dt_2dt_3.
\end{multline*}
\end{rem}

\begin{figure}[htb]
\centerline{\epsfig{file=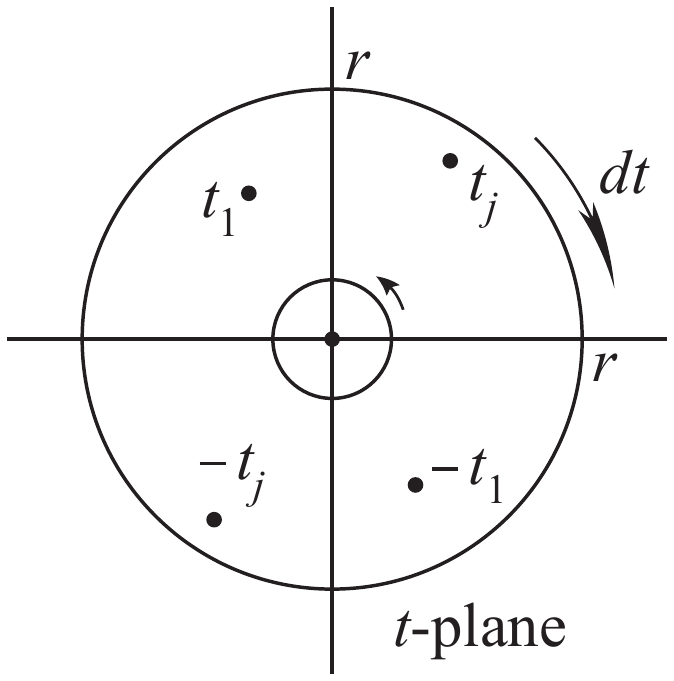, width=1.5in}}
\caption{The integration contour $\gamma$. This contour 
encloses an annulus bounded by two concentric 
circles centered at the origin. The outer one has a 
large radius 
$r>\max_{j\in N} |t_j|$ and the negative orientation,
 and the inner one has an infinitesimally small radius with 
 the positive
 orientation.}
\label{fig:contourD}
\end{figure}

\begin{thm}
\label{thm:LTofD}
Let us define the Laplace transform of the
number of Grothendieck's dessins by
\begin{equation}
\label{eq:LTofD}
F_{g,n}^D(t_1,\dots,t_n)
=\sum_{\mu\in \bZ_{+} ^n}
D_{g,n}(\mu) e^{-(\mu_1 w_1+\cdots+\mu_n w_n)},
\end{equation}
where the coordinate $t_i$ is related to the Laplace
conjugate coordinate $w_j$ by
$$
e^{w_j} = \frac{t_j+1}{t_j-1}+\frac{t_j-1}{t_j+1}.
$$
Then the differential forms
\begin{equation}
\label{eq:WgnD}
W_{g,n}^D (t_1,\dots,t_n) =
d_1\cdots  d_nF_{g,n}^D(t_1,\dots,t_n)
\end{equation}
satisfy the Eynard-Orantin topological recursion
\begin{multline}
\label{eq:DEO}
W_{g,n}^D(t_1,\dots,t_n)
\\
=
-\frac{1}{64} \; 
\frac{1}{2\pi i}\int_\gam
\left(
\frac{1}{t+t_1}+\frac{1}{t-t_1}
\right)
\frac{(t^2-1)^3}{t^2}\cdot \frac{1}{dt}\cdot dt_1
\\
\times
\Bigg[
\sum_{j=2}^n
\bigg(
W_{0,2}^D(t,t_j)W_{g,n-1}(-t,t_2,\dots,\widehat{t_j},
\dots,t_n)
+
W_{0,2}^D(-t,t_j)W_{g,n-1}(t,t_2,\dots,\widehat{t_j},
\dots,t_n)
\bigg)
\\
+
W_{g-1,n+1}^D(t,{-t},t_2,\dots,t_n)
+
\sum^{\text{stable}} _
{\substack{g_1+g_2=g\\I\sqcup J=\{2,3,\dots,n\}}}
W_{g_1,|I|+1}^D(t,t_I) W_{g_2,|J|+1}^D({-t},t_J)
\Bigg].
\end{multline}
The last sum is restricted to the stable 
geometries. In other words, the partition 
should satisfies
$2g_1-1+|I|>0$ and $2g_2-1+|J|$. 
The spectral curve $\Sigma$ 
of the Eynard-Orantin recursion is given by 
$$
\begin{cases}
x = z+\frac{1}{z}\\
y=-z
\end{cases}
$$
with the preferred coordinate $t$ given by 
$$
t = \frac{z+1}{z-1}.
$$
\end{thm}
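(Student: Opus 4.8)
The plan is to take the Laplace transform of the combinatorial identity~(\ref{eq:Dgn recursion}) of Theorem~\ref{thm:Dgn} and show, after a change of variables, that it becomes precisely the residue recursion~(\ref{eq:DEO}). First I would multiply~(\ref{eq:Dgn recursion}) by $e^{-(\mu_1w_1+\cdots+\mu_nw_n)}$ and sum over all $\mu\in\bZ_+^n$, matching each of the three groups of terms on the right-hand side with a corresponding piece of the Eynard-Orantin kernel integral. The operator $\mu_1\mapsto -\partial/\partial w_1 = -x_1\,d/dx_1$ on the left produces, after applying $d_2\cdots d_n$, the differential $W_{g,n}^D$ up to the kernel factor; the point is that the $x_1$-dependence is absorbed into $K^D(t,t_1)$ of~(\ref{eq:Dkernel}) via the identity $\frac{1}{2\pi i}\oint_\gamma K^D(t,t_1)\,t^{a}\,dt = (\text{coefficient extraction in } t_1)$, so that the residue loop $\gamma$ performs exactly the discrete convolutions $\sum_{\alpha+\beta=\mu_1-2}$ and $\sum_{j=2}^n$ appearing in~(\ref{eq:Dgn recursion}). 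This is the standard mechanism used in~\cite{EMS,MZ,CMS}, so I would cite those sources for the general principle and concentrate on the curve-specific computation.

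The key technical steps, in order, are: (i) rewrite the ``principal part'' operations $\sum_{m}D_{g,n}(\mu_1,\dots)\,e^{-\mu_1w_1}$ in terms of the coordinate $t_1$ using $e^{w_1}=\frac{t_1+1}{t_1-1}+\frac{t_1-1}{t_1+1}$ together with $z=\frac{t+1}{t-1}$ and $x=z+1/z$, so that the unstable inputs $W_{0,1}^D=-z\,dx$ of~(\ref{eq:W01D}) and $W_{0,2}^D(t_1,t_2)=\frac{dt_1\,dt_2}{(t_1+t_2)^2}$ of Proposition~\ref{prop:F02D} become available; (ii) verify that the kernel $K^D(t,t_1)$ of~(\ref{eq:Dkernel}) reproduces the weight $\alpha\beta$ for the loop term and the weight $(\mu_1+\mu_j-2)$ for the join term when evaluated by residues; (iii) handle the unstable exceptions separately — the $(g,n)=(0,2)$ piece with $g_1=0$, $I=\emptyset$ that sits on the right of~(\ref{eq:Dgn recursion}) is exactly what gets moved to the left and turned into the two $W_{0,2}^D(\pm t,t_j)W_{g,n-1}(\mp t,\dots)$ terms on the right of~(\ref{eq:DEO}), which is why the last sum there is restricted to stable $(g_i,|I|),(g_j,|J|)$; (iv) confirm the base cases $W_{1,1}^D$ and $W_{0,3}^D$ directly against~(\ref{eq:W11D}) and~(\ref{eq:W03D}), which have already been computed in the excerpt, to anchor the induction on $2g-2+n$. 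The identification of the spectral curve as $x=z+1/z$, $y=-z$ then follows from~(\ref{eq:ydx}) and~(\ref{eq:W01D}) as already noted, and the preferred coordinate $t=\frac{z+1}{z-1}$ is the one placing the two ramification points $z=\pm1$ at $t=0,\infty$.

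The main obstacle I anticipate is the careful bookkeeping in step~(ii)–(iii): one must show that moving the $1/x_1^{2m}$ monomials through the contour integral against $\left(\frac{1}{t+t_1}+\frac{1}{t-t_1}\right)\frac{(t^2-1)^3}{t^2}\frac{1}{dt}$ does not produce spurious residues from the pole of $dx$ at $t=\pm1$ (i.e.\ $z=\pm1$), and that the Galois conjugation $t\mapsto -t$ on $\Sigma$ correctly splits the symmetric sum $\alpha\leftrightarrow\beta$ without over-counting — the asymmetry of the arrowed half-edge argument in Case~2 of the proof of Theorem~\ref{thm:Dgn} is what makes the symmetrization on the B-model side come out right, and matching these two asymmetries is the delicate point. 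A secondary subtlety is the regularization of the $(0,1)$ input: the extra $-\log x$ term in $F_{0,1}^D$ of~(\ref{eq:F01D}) and the added $m=0$ Catalan term must be tracked so that $W_{0,1}^D$, not $\widetilde W_{0,1}^D$, appears in the denominator of the kernel; I would dispatch this by working throughout with the generating series $z(x)$ of~(\ref{eq:z(x)}) rather than the raw sum over $m\ge1$. Everything else is routine residue calculus once the dictionary between $\mu_1$-convolutions and the $t$-contour is set up.
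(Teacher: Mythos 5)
Your proposal follows essentially the same route as the paper's appendix proof: Laplace-transform the edge-removal equation~(\ref{eq:Dgn recursion}) in the $x_j$-variables, separate out the unstable $(0,1)$ and $(0,2)$ splittings (the former supplying the kernel denominator, the latter the $W_{0,2}^D(\pm t,t_j)$ terms), pass to the preferred coordinate $t$, and match the resulting differential recursion with the residues of the contour integral at $\pm t_1$ and $\pm t_j$, using (inductively in $2g-2+n$) that $w_{g,n}^D$ is a Laurent polynomial in the $t_i^2$ so that no other poles contribute. One small slip in your step (iii): the piece with $g_1=0$, $I=\emptyset$ is the $(0,1)$ term that moves to the left-hand side and builds the kernel, whereas it is the $g_1=0$, $I=\{j\}$ terms that become $W_{0,2}^D(\pm t,t_j)W_{g,n-1}(\mp t,\dots)$ --- you clearly know both facts, but the sentence conflates them.
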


We give the proof of this theorem in the appendix.

The problem of counting dessins
is  closely related to
the counting problem of the
lattice points of the moduli
space $\cM_{g,n}$ of smooth $n$-pointed
algebraic curves of genus $g$ 
studied in \cite{N1,N2}.
Let us briefly recall
 the combinatorial model for the moduli space
$\cM_{g,n}$ due to Thurston (see for example,
\cite{STT}),
Harer
\cite{Harer},
Mumford \cite{Mumford}, and Strebel
\cite{Strebel},  
following  \cite{MP1998, MP2010}.
For a given ribbon graph $\Gamma$ with $e=e(\Gamma)$
edges, the space of metric
ribbon graphs is $\bR_+ ^{e(\Gamma)}/\Aut (\Gamma)$,
where the
automorphism group acts by permutations of edges
(see \cite[Section~1]{MP1998}).
When we consider \emph{ribbon graph automorphisms},
we restrict ourselves  that
$\Aut (\Gamma)$ fixes each $2$-cell of the cell-decomposition.
We also require that every vertex of a ribbon graph has degree
 $3$ or more.
Using the canonical holomorphic coordinate system on
a topological surface of \cite[Section~4]{MP1998}
corresponding to a metric ribbon graph,
 and the Strebel differentials \cite{Strebel}, we have
an isomorphism of topological orbifolds \cite{Harer,Mumford}
\begin{equation}
\label{eq:M=R}
{\cM}_{g,n}\times \bR_+ ^n \isom R_{g,n}
\end{equation}
for $(g,n)$ in the stable range.
Here
$$
R_{g,n} = \coprod_{\substack{\Gamma {\text{ ribbon graph}}\\
{\text{of type }} (g,n)}}
\frac{\bR_+ ^{e(\Gamma)}}{\Aut (\Gamma)}
$$
is an orbifold consisting of metric ribbon graphs of a given
topological type $(g,n)$.
The gluing of orbi-cells
 is done by making
the length of a non-loop edge tend to $0$. The space
 $R_{g,n}$ is a smooth orbifold
(see \cite[Section~3]{MP1998} and \cite{STT}).
We denote by $\pi:R_{g,n}\longrightarrow
\bR_+ ^n$ the natural projection via (\ref{eq:M=R}), 
which
is the assignment of the  perimeter length
of each boundary to a given metric ribbon graph.

Take a ribbon graph $\Gamma$. Since $\Aut(\Gamma)$
fixes every boundary component of $\Gamma$, 
they are labeled
by $[n]=\{1,2\dots,n\}$. For the moment let us 
give a label to
each edge of $\Gamma$ by an index set 
$[e] = \{1,2,\dots,e\}$.
The edge-face incidence matrix  is defined by
\begin{equation}
\label{eq:incidence}
\begin{aligned}
A_\Gamma &= \big[
a_{i\eta}\big]_{i\in [n],\;\eta\in [e]};\\
a_{i\eta} &= \text{ the number of times edge $\eta$ 
appears in
face $i$}.
\end{aligned}
\end{equation}
Thus $a_{i\eta} = 0, 1,$ or $2$, and the sum of the
entries in each column is
always $2$. The $\Gamma$ contribution of the space
$\pi^{-1}(\mu_1,\dots,\mu_n) = R_{g,n}(\mu)$
 of metric ribbon graphs with
a prescribed perimeter $\mathbf{\mu}=
(\mu_1,\dots,\mu_n)\in \bR_+ ^n$
 is the orbifold
polytope
$$
\frac{\{\mathbf{x}\in \bR_+ ^e\;|\;
A_\Gamma \mathbf{x} = \mathbf{\mu}\}}
{\Aut(\Gamma)},
$$
where $\mathbf{x}=(\ell_1,\dots,\ell_e)$ 
is the collection of
edge lengths of the metric ribbon graph $\Gamma$. 
We have
\begin{equation}
\label{eq:sump}
\sum_{i\in [n]} \mu_i= \sum_{i\in [n]}
\sum_{\eta\in [e]}a_{i\eta}\ell_\eta =
2\sum_{\eta\in [e]}
\ell_\eta.
\end{equation}
Now let $\mu\in\bZ_+ ^n$ be a vector consisting of 
positive integers. The lattice point counting function
we consider is defined by
\begin{equation}
\label{eq:Ngn}
N_{g,n}(\mu) = 
\sum_{\substack{\Gamma {\text{ ribbon graph}}\\
{\text{of type }} (g,n)}}
\frac{\big|\{\bx\in \bZ_+ ^n\;|\;A_\Gam \bx = \mu\}\big|}
{|\Aut(\Gam)|}
\end{equation}
for $(g,n)$ in the stable range (\cite{CMS, MP2010,
N1,N2}).

To find the spectral curve 
for lattice point counting, we  
need to identify the unstable moduli
$\cM_{0,1}$ and
the ribbon graph space $R_{0,1}$.
We recall that 
the orbifold  isomorphism (\ref{eq:M=R}) holds
for  $(g,n)$ in the stable range by 
\emph{defining} $R_{g,n}$
as  the space of metric ribbon graphs
of type $(g,n)$ \emph{without} 
vertices of degrees $1$ and $2$.
For $(g,n) = (0,1)$, there are no ribbon graphs
satisfying these conditions.
Let $v_j$ denote the number of 
degree $j$ vertices in a ribbon graph $\Gam$
of type $(g,n)$. Then we have
$$
\sum_{j\ge 1} jv_j = 2e,\qquad 
\sum_{j\ge 1} v_j = v,
$$
where $v$ is the total number of vertices of $\Gam$.
Hence
\begin{equation}
\label{eq:vertex}
2(2g-2+n) = 2e - 2v = \sum_{j\ge 1} (j-2)v_j
=- v_1 + \sum_{j\ge 3}(j-2)v_j.
\end{equation}
It follows that the number of degree $1$
vertices $v_1$ is positive
when $(g,n)=(0,1)$.
Thus
we conclude that \emph{there is no spectral
curve for this counting problem}.

Still we can consider the Laplace transform of 
the number (\ref{eq:Ngn}) of lattice points of the 
moduli space $\cM_{g,n}$ with a prescribed 
perimeter length. We define for every stable
$(g,n)$
\begin{equation}
\label{eq:FgnL}
F_{g,n}^L(t_1,\dots,t_n) 
= \sum_{\mu\in \bZ_+ ^n} N_{g,n}(\mu) 
\prod_{i=1} ^n \frac{1}
{z_i^{\mu_i}},
\end{equation}
and the Eynard-Orantin differential forms by
\begin{equation}
\label{eq:WgnL}
W_{g,n}^L(t_1,\dots,t_n)
=d_1\cdots  d_nF_{g,n}^L(t_1,\dots,t_n).
\end{equation}
The following result is proved in \cite{CMS},
with inspiration from \cite{N2}.

\begin{thm}[\cite{CMS}]
\label{thm:EOforL}
The differential forms $W_{g,n}^L(t_1,\dots,t_n)$
satisfy the Eynard-Orantin topological recursion
with respect to the same 
spectral curve (\ref{eq:Dspectral}) and 
the recursion kernel (\ref{eq:Dkernel}),
starting with exactly the same first two stable cases
\begin{equation}
\label{eq:W11L}
W_{1,1}^L(t_1) = -\frac{1}{128} 
\frac{(t_1^2-1)^3}{t_1 ^4} dt_1,
\end{equation}
and
\begin{equation}
\label{eq:W03L}
W_{0,3}^L(t_1,t_2,t_3)=-\frac{1}{16}\left(
1-\frac{1}{t_1^2\;t_2^2\;t_3^2}
\right)
dt_1dt_2dt_3.
\end{equation}
\end{thm}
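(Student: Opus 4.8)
The plan is to mirror the proof of Theorem~\ref{thm:LTofD} (given in the appendix) by first establishing a lattice-point recursion analogous to~(\ref{eq:Dgn recursion}) and then passing to the Laplace transform. The combinatorial input is the recursion for $N_{g,n}(\mu)$ proved in~\cite[Theorem~3.3]{CMS}, which comes from the edge-removal operation on ribbon graphs (the dual of the edge-shrinking operations of Case~1 and Case~2 in the proof of Theorem~\ref{thm:Dgn}, with the arrow corresponding to a ciliation). Concretely, one has a relation of the shape
\begin{multline*}
\mu_1 N_{g,n}(\mu_1,\mu_{[n]\setminus\{1\}})
=
\sum_{j=2}^n \sum_{\substack{q\ge 0}} q\,
\big[\text{kernel}\big]\,N_{g,n-1}(q,\mu_{[n]\setminus\{1,j\}})
\\
+\;\frac12 \sum_{\a+\b\le \mu_1-2}
\big[\text{kernel}\big]\,\Bigg[
N_{g-1,n+1}(\a,\b,\mu_{[n]\setminus\{1\}})
+\sum_{\substack{g_1+g_2=g\\ I\sqcup J=\{2,\dots,n\}}}^{\text{stable}}
N_{g_1,|I|+1}(\a,\mu_I)\,N_{g_2,|J|+1}(\b,\mu_J)
\Bigg],
\end{multline*}
where the summation kernels are the elementary lattice-counting coefficients recorded in~\cite{CMS}. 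Because the unstable terms are now \emph{absent} (we showed above that there is no $R_{0,1}$, and $N_{0,2}$ does not enter the same way), this is already an effective recursion, unlike~(\ref{eq:Dgn recursion}).

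Next I would compute the Laplace transform of this recursion with respect to the conjugate variables $1/z_i$, using the change of variable $z = (t+1)/(t-1)$ of~(\ref{eq:z(t)}). The key analytic tool is the same one used for the dessin count: summing a geometric-type series against $1/z^\mu$ produces, after applying $d_1\cdots d_n$, a contour integral over the annulus $\gamma$ of Figure~\ref{fig:contourD} against the Cauchy kernel $W_{0,2}^D$ from~(\ref{eq:W02D}), and the factor $\mu_1$ on the left becomes the differential operator that, combined with the relation~(\ref{eq:dx and dz}) between $dx$ and $dz$ and the definition~(\ref{eq:W01D}) of $W_{0,1}^D = -z\,dx$, assembles precisely the recursion kernel~(\ref{eq:Dkernel}). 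The point is that the spectral curve and kernel are \emph{the same} as in the dessin problem — this is forced because $W_{0,1}$ and the annulus amplitude $W_{0,2}$ governing the Laplace transform are identical — so the transformed recursion is literally~(\ref{eq:DEO}) with $W^D$ replaced by $W^L$. One then verifies the base cases: $W_{0,3}^L$ is computed from~(\ref{eq:W03}) using $dx\cdot dy = (1 - z^{-2})^2\,dz\cdot dz$ in the $t$-coordinate, and a residue computation at $t=0$ (the only pole inside $\gamma$ after the large circle contributes nothing, since $W_{0,2}^D$ is $O(t^{-2})$ at $\infty$) gives~(\ref{eq:W03L}); $W_{1,1}^L$ is computed from~(\ref{eq:W11}) with the regularization term $\pi^*\frac{dx\,dx}{(x-x')^2}$, reducing as in~(\ref{eq:W11D}) to an integral of $K^D(t,t_1)\cdot \frac{dt\,dt}{4t^2}$ and yielding~(\ref{eq:W11L}). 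Both agree termwise with the Laplace transforms of the explicit low-order values of $N_{g,n}$ tabulated in~\cite{CMS,MP2010,N2}, which pins down the integration constants.

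Finally, since the transformed recursion kernel has poles only at the ramification points $t=\pm 1$ and the $W_{g,n}^L$ are generated from $W_{0,2}^D$ and the stable base cases by the same contour-integral operator, an induction on $2g-2+n$ shows that each $W_{g,n}^L$ is a meromorphic symmetric differential with poles only at $t=\pm 1$, so it genuinely satisfies Definition~\ref{def:EO}. The main obstacle I expect is bookkeeping at the level of the Laplace transform: one must check carefully that the polynomial-in-$\ell$ kernels appearing in the \cite{CMS} lattice recursion transform into exactly the rational functions in $t$ that constitute $K^D$ and $W_{0,2}^D$, i.e.\ that no extra holomorphic (polynomial) corrections survive. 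This is a finite but delicate computation of generating functions; once it is in place, the identification with~(\ref{eq:DEO}) and the inductive pole analysis are formal. Since this result is already established in~\cite{CMS}, I would in the write-up either reproduce this Laplace-transform argument in parallel with the appendix proof of Theorem~\ref{thm:LTofD}, or simply cite~\cite{CMS} for the recursion and restrict the new exposition to re-deriving the common spectral curve~(\ref{eq:Dspectral}) and the shared base cases~(\ref{eq:W11L}) and~(\ref{eq:W03L}).
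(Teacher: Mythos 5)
The paper itself offers no proof of this theorem: it is quoted from \cite{CMS} (``The following result is proved in \cite{CMS}, with inspiration from \cite{N2}''), so there is no in-paper argument to match your proposal against. Your overall strategy --- Laplace-transform the edge-removal recursion of \cite[Theorem~3.3]{CMS} for $N_{g,n}(\mu)$ in parallel with the appendix proof of Theorem~\ref{thm:LTofD} --- is indeed the route taken in \cite{CMS}, and your closing option (cite \cite{CMS} for the recursion and verify only the shared spectral curve and base cases) is a legitimate way to discharge the statement.

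There is, however, a genuine logical slip in the middle of your argument, and it is internally inconsistent with your own setup. You first note correctly that $R_{0,1}$ is empty, but then assert that the spectral curve and kernel are ``forced because $W_{0,1}$ and the annulus amplitude $W_{0,2}$ governing the Laplace transform are identical'' to those of the dessin problem. They are not: as the paper shows around (\ref{eq:vertex}), every type-$(0,1)$ ribbon graph has a degree-one vertex, so $F_{0,1}^L$ vanishes identically and there is no $W_{0,1}^L$ to coincide with $W_{0,1}^D=-z\,dx$; likewise $N_{0,2}$ does not hand you $W_{0,2}^D$. This is precisely the point of the Remark following the theorem: the appearance of the spectral curve (\ref{eq:Dspectral}) and kernel (\ref{eq:Dkernel}) in the lattice-point problem is \emph{surprising} and must be verified a posteriori by matching the Laplace transform of the \cite{CMS} summation kernels against $K^D$ --- the very computation you defer as ``bookkeeping.'' That deferred check is the substance of the proof, not a formality. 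Relatedly, the base cases (\ref{eq:W11L}) and (\ref{eq:W03L}) cannot be obtained from the formulas (\ref{eq:W11}) and (\ref{eq:W03}), since those presuppose exactly the spectral-curve data you are trying to establish; they must be computed directly as Laplace transforms of the explicit counts $N_{1,1}(\mu)$ and $N_{0,3}(\mu)$ (equivalently read off by differentiating (\ref{eq:F11L}) and (\ref{eq:F03L})) and then observed to agree with $W_{1,1}^D$ and $W_{0,3}^D$. With those two corrections your outline becomes a faithful account of the \cite{CMS} argument.
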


\begin{rem}
It is somewhat surprising, because 
the spectral curve (\ref{eq:Dspectral}) has
nothing to do with the lattice point counting
problem. As we have mentioned, the $(g,n)=(0,1)$
and $(0,2)$ considerations for this problem 
do not produce the spectral curve.
\end{rem}

\begin{cor}
\label{cor:D=N}
For every $(g,n)$ with $2g-2+n>0$, we have 
the identity
\begin{equation}
\label{eq:WgnD=WgnN}
W_{g,n}^D(t_1,\dots,t_n) = W_{g,n}^L(t_1,\dots,t_n).
\end{equation}
The differential form $W_{g,n}^D(t_1,\dots,t_n)$
is a Laurent polynomial in $t_1^2,\dots,t_n^2$
of degree $2(3g-3+n)$,
with a reciprocity property
\begin{equation}
\label{eq:WgnDreciprocity}
W_{g,n}^D(1/t_1,\dots,1/t_n) = 
(-1)^n t_1^2\cdots t_n^2\;W_{g,n}^D(t_1,\dots,t_n).
\end{equation}
The numbers of dessins can be expressed in terms
of the number of lattice
points:
\begin{equation}
\label{eq:DinN}
D_{g,n}(\mu_1,\dots,\mu_n)
=
\sum_{\ell_1>\frac{\mu_1}{2}}
\cdots
\sum_{\ell_n>\frac{\mu_n}{2}}
\prod_{i=1}^n
\frac{2\ell_i-\mu_i}{\mu_i}
\binom{\mu_i}{\ell_i}
N_{g,n}(2\ell_1-\mu_i,\cdots,2\ell_n-\mu_n).
\end{equation}
\end{cor}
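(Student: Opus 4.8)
The plan is to establish the three assertions in the order they are stated, with the bulk of the work concentrated in the first. For the identity $W_{g,n}^D = W_{g,n}^L$, I would invoke the uniqueness of solutions to the Eynard-Orantin recursion: by Theorem~\ref{thm:LTofD} the forms $W_{g,n}^D$ satisfy the recursion with spectral curve \eqref{eq:Dspectral} and kernel \eqref{eq:Dkernel}, and by Theorem~\ref{thm:EOforL} (the result of \cite{CMS}) the forms $W_{g,n}^L$ satisfy the same recursion with the \emph{same} spectral curve and kernel. The recursion \eqref{eq:EO} determines every $W_{g,n}$ with $2g-2+n>0$ uniquely from $W_{0,1}$, $W_{0,2}$, and the recursion kernel, via induction on $2g-2+n$; hence it suffices to check that the two families agree on the base cases $(g,n)=(1,1)$ and $(0,3)$. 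But \eqref{eq:W11D} and \eqref{eq:W11L} give the identical answer $-\tfrac{1}{128}(t_1^2-1)^3/t_1^4\,dt_1$, and \eqref{eq:W03D} and \eqref{eq:W03L} both give $-\tfrac{1}{16}(1-1/(t_1^2t_2^2t_3^2))\,dt_1dt_2dt_3$. An induction on $2g-2+n$ using \eqref{eq:EO} then propagates the equality to all stable $(g,n)$.

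Next, the structural statement about $W_{g,n}^D$: that it is a Laurent polynomial in $t_1^2,\dots,t_n^2$ of degree $2(3g-3+n)$ with the reciprocity \eqref{eq:WgnDreciprocity}. This I would again prove by induction on $2g-2+n$, reading off the behaviour from the recursion \eqref{eq:DEO}. The kernel $-\tfrac{1}{64}\left(\tfrac{1}{t+t_1}+\tfrac{1}{t-t_1}\right)\tfrac{(t^2-1)^3}{t^2}\cdot\tfrac{1}{dt}\cdot dt_1$ is even in $t$ up to the sign flip $t\mapsto -t$ inherent in the Galois structure, and $W_{0,2}^D(t_1,t_2)=dt_1dt_2/(t_1+t_2)^2$ together with the residue computation at $t=0$ and at the poles $t=\pm t_j$ produces only even powers of each $t_i$ and of the integration variable; the degree count $2(3g-3+n)$ follows by tracking how the factor $(t^2-1)^3/t^2$ raises the degree by $4$ at each contraction against two forms whose degrees (by induction) sum correctly, after the residue at $t=0$ is extracted. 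The reciprocity $t_i\mapsto 1/t_i$ should be checked first on the base cases (where it is immediate from the explicit formulas above) and then shown to be preserved by \eqref{eq:DEO}, using that $t\mapsto 1/t$ sends $(t^2-1)^3/t^2$ to $-(t^2-1)^3/t^2 \cdot t^{-4}\cdot t^4 = $ the same up to the controlled sign, and that the change of variables in the contour integral is compatible.

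Finally, \eqref{eq:DinN} is extracted from \eqref{eq:WgnD=WgnN} by unwinding the two Laplace/generating-function conventions. From \eqref{eq:FgnL}, $F_{g,n}^L$ is a power series in the variables $1/z_i$ with coefficients $N_{g,n}(\mu)$, while $F_{g,n}^D$ is a power series in the $e^{-w_i}$ with coefficients $D_{g,n}(\mu)$; the two are linked by the relation $e^{w_j}=\tfrac{t_j+1}{t_j-1}+\tfrac{t_j-1}{t_j+1}=z_j+\tfrac{1}{z_j}$, i.e.\ $e^{-w_j}=1/(z_j+1/z_j)$. Equating the coefficient expansions in $1/z_i$ after substituting $e^{-w_j}=z_j^{-1}(1+z_j^{-2})^{-1}=\sum_{k\ge 0}(-1)^k z_j^{-(2k+1)}$ and re-expanding, one matches $D_{g,n}(\mu)$ against a sum over $N_{g,n}(\nu)$; the combinatorial coefficient $\prod_i \tfrac{2\ell_i-\mu_i}{\mu_i}\binom{\mu_i}{\ell_i}$ with $\nu_i=2\ell_i-\mu_i$ is precisely what arises from the binomial expansion of $(z_j+1/z_j)^{-\mu_j}$, isolating the coefficient of $z_j^{-\nu_j}$. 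Since $W_{g,n}^D=d_1\cdots d_n F_{g,n}^D$ and likewise for $L$, the equality of differential forms upgrades to equality of the generating functions up to $t$-independent constants, which do not affect the positive-degree coefficients we are comparing.

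\textbf{Main obstacle.} The genuinely delicate step is the coefficient-matching in \eqref{eq:DinN}: one must pass between the coordinate $z$ (in which $N_{g,n}$ is naturally expanded) and the coordinate $t$ (in which the recursion lives), keep track of the change of variables $d_i$ versus the substitution $e^{w_j}=z_j+1/z_j$, and verify that the combinatorial factor produced by expanding $(z+1/z)^{-\mu}$ in powers of $1/z$ is exactly $\tfrac{2\ell-\mu}{\mu}\binom{\mu}{\ell}$ rather than some close cousin. The degree and reciprocity claims are comparatively mechanical inductions once the base cases are in hand.
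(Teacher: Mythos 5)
Your first two assertions are in good shape. The identity $W^D_{g,n}=W^L_{g,n}$ via uniqueness of the Eynard--Orantin recursion from the common base cases $(1,1)$ and $(0,3)$ is exactly the paper's argument. For the Laurent-polynomiality you follow the paper's induction on the residue calculus at $t=0,\infty$; for the degree count and the reciprocity \eqref{eq:WgnDreciprocity}, however, the paper does not push these through the recursion but simply imports them from the known structure of $F^L_{g,n}$ (Theorem~\ref{thm:MP2010}: $F^L_{g,n}$ has degree $3(2g-2+n)$ and satisfies $F^L_{g,n}(1/t_1,\dots,1/t_n)=F^L_{g,n}(t_1,\dots,t_n)$, which after applying $d_1\cdots d_n$ gives precisely \eqref{eq:WgnDreciprocity} and the degree $2(3g-3+n)$). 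Your inductive alternative is plausible but the phrase ``up to the controlled sign'' is doing real work there; the citation route is cleaner and already available once $W^D=W^L$ is known.

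The genuine gap is in your derivation of \eqref{eq:DinN}, and it is exactly at the spot you flagged. Matching coefficients of $z^{-\nu}$ after expanding $x^{-\mu}=(z+1/z)^{-\mu}=\sum_k(-1)^k\binom{\mu+k-1}{k}z^{-\mu-2k}$ produces the relation in the \emph{wrong direction}: it expresses $N_{g,n}(\nu)$ as a (triangular) sum over $D_{g,n}(\mu)$, with negative-binomial coefficients, not $D$ in terms of $N$ with the factor $\frac{2\ell-\mu}{\mu}\binom{\mu}{\ell}$. The finite binomial coefficients $\binom{\mu_i}{\ell_i}$, $\ell_i>\mu_i/2$, come from expanding the \emph{positive} power $(z+1/z)^{\mu}$, and the correct mechanism is a residue pairing rather than coefficient comparison: from $\sum_\mu D_{g,n}(\mu)\prod_i d(x_i^{-\mu_i})=(-1)^n\sum_\nu N_{g,n}(\nu)\prod_i d(z_i^{\nu_i})$ (using $W^L_{g,n}(-t_1,\dots,-t_n)=(-1)^nW^L_{g,n}(t_1,\dots,t_n)$ to flip $z\mapsto 1/z$), one multiplies by $x_1^{\mu_1}\cdots x_n^{\mu_n}$ and takes residues at $z_i=0$. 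The left side collapses by the orthogonality $\frac{1}{2\pi i}\oint x^{\mu}\,d(x^{-\mu'})=\mu\,\delta_{\mu\mu'}$ to $D_{g,n}(\mu)\,\mu_1\cdots\mu_n$, while on the right the expansion $(z+1/z)^{\mu}=\sum_\ell\binom{\mu}{\ell}z^{\mu-2\ell}$ forces $\nu_i=2\ell_i-\mu_i$ and yields the coefficient $(2\ell_i-\mu_i)\binom{\mu_i}{\ell_i}$. Without this residue-extraction step your proposal would at best give the inverse triangular system, and inverting it is not a routine re-expansion.
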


\begin{rem}
The relation (\ref{eq:DinN}) appears in 
\cite[Section~2.1]{NS2} for  an abstract setting.
\end{rem}

\begin{proof}
The Eynard-Orantin topological recursion 
uniquely determines the differential forms for
all $(g,n)$. Since $W_{1,1}^D(t)=W_{1,1}^L(t)$
and $W_{0,3}^D(t_1,t_2,t_3) = 
W_{0,3}^L(t_1,t_2,t_3)$, we conclude that
$W_{g,n}^D(t_1,\dots,t_n) =
 W_{g,n}^L(t_1,\dots,t_n)$ for $2g-2+n>0$.

By induction on $2g-2+n$ we can show
that 
$W_{g,n}^D(t_1,\dots,t_n)$ is a Laurent
polynomial in $t_1^2,\dots,t_n^2$. 
The statement is true for the initial cases
(\ref{eq:W11D}) and (\ref{eq:W03D}).
The integral transformation formula
(\ref{eq:DEO}) is a residue calculation
at $t=0$ and $t=\infty$. 
By the induction hypothesis, the right-hand side of
(\ref{eq:DEO}) becomes
\begin{multline*}
-\frac{1}{64} \; 
\frac{1}{2\pi i}\int_\gam
\left(
\frac{1}{t+t_1}+\frac{1}{t-t_1}
\right)
\frac{(t^2-1)^3}{t^2}\cdot \frac{1}{dt}\cdot dt_1
\\
\times
\Bigg[
\sum_{j=2}^n
\bigg(
W_{0,2}^D(t,t_j)W_{g,n-1}(-t,t_2,\dots,\widehat{t_j},
\dots,t_n)
+
W_{0,2}^D(-t,t_j)W_{g,n-1}(t,t_2,\dots,\widehat{t_j},
\dots,t_n)
\bigg)
\\
+
W_{g-1,n+1}^D(t,{-t},t_2,\dots,t_n)
+
\sum^{\text{stable}} _
{\substack{g_1+g_2=g\\I\sqcup J=\{2,3,\dots,n\}}}
W_{g_1,|I|+1}^D(t,t_I) W_{g_2,|J|+1}^D({-t},t_J)
\Bigg]
\\
=
\frac{1}{32} \; 
\frac{1}{2\pi i}\int_\gam
\frac{(t^2-1)^3}{t^2-t_1^2}
\frac{1}{t}\cdot \frac{1}{dt}\cdot dt_1
\Bigg[
\sum_{j=2}^n
\frac{2(t^2+t_j^2)}{(t^2-t_j^2)^2}\;
W_{g,n-1}(t,t_2,\dots,\widehat{t_j},
\dots,t_n)\;dt\cdot dt_j
\\
+
W_{g-1,n+1}^D(t,{t},t_2,\dots,t_n)
+
\sum^{\text{stable}} _
{\substack{g_1+g_2=g\\I\sqcup J=\{2,3,\dots,n\}}}
W_{g_1,|I|+1}^D(t,t_I) W_{g_2,|J|+1}^D({t},t_J)
\Bigg].
\end{multline*}
Clearly the residues at $t=0$ and $t=\infty$ are
Laurent polynomials in $t_1^2,\dots,t_n^2$.

Because of (\ref{eq:WgnD=WgnN}), we have
\begin{equation}
\label{eq:D=N}
\sum_{\mu\in \bZ_+^n}
D_{g,n}(\mu) \prod_{i=1}^n d
\left(\frac{1}{x_i^{\mu_i}}\right)
=
\sum_{\nu\in \bZ_+^n}
N_{g,n}(\nu) \prod_{i=1}^n d
\left(\frac{1}{z_i^{\nu_i}}
\right)
=
(-1)^n
\sum_{\nu\in \bZ_+^n}
N_{g,n}(\nu) \prod_{i=1}^n d
{z_i^{\nu_i}},
\end{equation}
where $x_i=z_i+1/z_i$. 
The Galois conjugation $t\rar -t$ corresponds
to $z\rar 1/z$. Since
$$
W_{g,n}^N(t_1,\dots,t_n)=(-1)^n
W_{g,n}^N(-t_1,\dots,-t_n),
$$
the second equality of (\ref{eq:D=N}) follows.
Take the residue of the left-hand side of
(\ref{eq:D=N}) at $x_i=\infty$ for $i=1,\dots,n$. 
On the right-hand side we take the residue
at $z_i=0$ for every $i$. 
Then for every $(\mu_1,\dots,\mu_n)\in \bZ_+^n$ 
we have
\begin{equation}
\label{eq:Nresidue}
D_{g,n}(\mu_1,\dots,\mu_n) \mu_1\cdots \mu_n
=
\left(\frac{1}{2\pi i}
\right)^n
\int_{|z_1|=\epsilon}\cdots
\int_{|z_n|=\epsilon}
x_1^{\mu_1}\cdots x_n^{\mu_n}
\sum_{\nu\in \bZ_+^n}
N_{g,n}(\nu) \prod_{i=1}^n d
{z_i^{\nu_i}}.
\end{equation}
Since 
$$
\left(z_i+\frac{1}{z_i}\right)^{\mu_i} =
 \sum_{\ell_i=0}^{\mu_i}
\binom{\mu_i}{\ell_i}z_i^{\mu_i-2\ell_i},
$$
the residue of (\ref{eq:Nresidue}) 
comes from the term $\mu_i-2\ell_i+\nu_i=0$,
and we have
\begin{multline*}
D_{g,n}(\mu_1,\dots,\mu_n) \mu_1\cdots \mu_n
\\
=
\sum_{\ell_1>\mu_1/2}\cdots
\sum_{\ell_n>\mu_n/2}
\prod_{i=1}^n
(2\ell_i-\mu_i)\binom{\mu_i}{\ell_i}
N_{g,n}(2\ell_1-\mu_1,\dots,2\ell_n-\mu_n).
\end{multline*}

The reciprocity relation, 
and the degree of the Laurent polynomial,
is the consequence of
the following, which was established in \cite{MP2010}.

\begin{thm}[\cite{MP2010}]
\label{thm:MP2010}
The functions $F_{g,n}^L(t_1,\dots,t_n)$ 
of (\ref{eq:FgnL}) for the stable range 
$2g-2+n>0$
are uniquely determined by the following
differential recursion formula from the
initial values $F_{0,3}^L(t_1,t_2,t_3)$ and
$F_{1,1}^L(t_1)$.
\begin{multline}
\label{eq:FgnL recursion}
F_{g,n}^L(t_1,\dots,t_n)
\\
=
-\frac{1}{16}
\int_{-1} ^{t_1}
\Biggr[
\sum_{j=2} ^n
\frac{t_j}{t^2-t_j^2}
\Bigg(
\frac{(t^2-1)^3}{t^2}\frac{\partial}{\partial t}
F_{g,n-1^L}(t,t_{[n]\setminus\{1,j\}})
-
\frac{(t_j^2-1)^3}{t_j^2}\frac{\partial}{\partial t_j}
F_{g,n-1}^L(t_{[n]\setminus\{1\}})
\Bigg)
\\
+
\sum_{j=2} ^n
\frac{(t^2-1)^2}{t^2}\frac{\partial}{\partial t}
F_{g,n-1}^L(t,t_{[n]\setminus\{1,j\}})
\\
+
\frac{1}{2}\;\frac{(t^2-1)^3}{t^2}
\frac{\partial^2}{\partial u_1\partial u_2}
\Bigg(
F_{g-1,n+1}^L(u_1,u_2,t_{[n]\setminus\{1\}})
\\
+
\sum_{\substack{g_1+g_2=g\\I\sqcup J=[n]
\setminus\{1\}}}
^{\rm{stable}}
F_{g_1,|I|+1}^L(u_1,t_I)F_{g_2,|J|+1}(u_2,t_J)
\Bigg)
\Bigg|_{u_1=u_2=t}\;
\Biggr]\;dt.
\end{multline}
Here $[n]=\{1,2,\dots,n\}$ is an index set,
and the last sum is taken over all partitions 
$g_1+g_2 = g$ and
set partitions $I\sqcup J=[n]\setminus\{1\}$ subject to
the stability conditions $2g_1-1+|I|>0$ and $2g_2-1+|J|>0$.
The initial values are given by
\begin{equation}
\label{eq:F11L}
F_{1,1}^L(t_1) = 
-\frac{1}{384}\frac{(t+1)^4}{t^2}\left(t-4+\frac{1}{t}
\right)
\end{equation}
and 
\begin{equation}
\label{eq:F03L}
F_{0,3}^L(t_1,t_2,t_3)
= -\frac{1}{16}(t_1+1)(t_2+1)(t_3+1)
\left(1+\frac{1}{t_1\;t_2\;t_3}\right).
\end{equation}
In the stable range
 $F_{g,n}^L(t_1,\dots,t_n)$  is a Laurent polynomial of 
degree $3(2n-2+n)$ and satisfies the reciprocity 
relation
\begin{equation}
\label{eq:FgnLreciprocity}
F_{g,n}^L(1/t_1,\dots,1/t_n)
= F_{g,n}^L(t_1,\dots,t_n).
\end{equation}
The leading terms of $F_{g,n}^L(t_1,\dots,t_n)$ 
form a homogeneous polynomial of degree
$3(2g-2+n)$, and is given by
\begin{equation}
\label{eq:FgnK}
{F}_{g,n}^{K}(t_1,\dots,t_n)
\overset{\text{def}}{=}
\frac{(-1)^n}{2^{2g-2+n}}
\sum_{\substack{d_1+\cdots+d_n\\=3g-3+n}}
\la \tau_{d_1}\cdots\tau_{d_n}\ra_{g,n}
\prod_{j=1}^n (2d_j-1)!! \left(
\frac{t_j}{2}
\right)^{2d_j+1},
\end{equation}
where 
$$
\la \tau_{d_1}\cdots\tau_{d_n}\ra_{g,n}
=\int_{\Mbar_{g,n}} \psi_1^{d_1}\cdots
\psi_n^{d_n}
$$
is the $\psi$-class intersection number
(see Section~\ref{sect:Hurwitz} for more
detail about intersection numbers).
The special value at $t_i=1$ gives
\begin{equation}
\label{eq:Euler}
F_{g,n}^L(1,1,\dots,1) = (-1)^n \rchi(\cM_{g,n}).
\end{equation}
\end{thm}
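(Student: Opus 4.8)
\textbf{Proof proposal for Theorem~\ref{thm:MP2010}.}
The plan is to prove this by appealing to the reference \cite{MP2010}, where the differential recursion \eqref{eq:FgnL recursion} is established, and to assemble the remaining assertions (Laurent polynomiality, degree, reciprocity, the leading-term identity, and the Euler-characteristic specialization) from the structure of that recursion together with the known integration-by-parts relation between lattice-point counts and intersection numbers. First I would record the two initial values \eqref{eq:F11L} and \eqref{eq:F03L}; these are obtained by directly integrating the closed forms $W_{1,1}^L$ of \eqref{eq:W11L} and $W_{0,3}^L$ of \eqref{eq:W03L} against $dt_1$ (and $dt_1dt_2dt_3$), using the antiderivative that vanishes in the appropriate normalization, so that $d_1\cdots d_n F_{g,n}^L = W_{g,n}^L$. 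One checks directly that both are Laurent polynomials of degree $3(2g-2+n)$ (namely $3$ and $6$) satisfying the reciprocity \eqref{eq:FgnLreciprocity}.

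Next I would run an induction on $2g-2+n$ using \eqref{eq:FgnL recursion}. The key observation is that the operator appearing on the right-hand side — multiply by rational factors like $(t^2-1)^3/t^2$, apply $\partial/\partial t$ or $\partial^2/\partial u_1\partial u_2$, and then integrate $\int_{-1}^{t_1}$ — sends Laurent polynomials satisfying the reciprocity relation to Laurent polynomials satisfying the reciprocity relation, with the degree shifting by exactly $+3$ in the stable-to-stable step (this is the same bookkeeping that underlies the degree count $2(3g-3+n)$ for $W_{g,n}^D$ in Corollary~\ref{cor:D=N}). The boundary term from $t=-1$ vanishes because of the factor $(t^2-1)^3$ (resp.\ $(t^2-1)^2$), so the integral is a genuine Laurent polynomial with no logarithmic pieces. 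Tracking the top-degree homogeneous part through the recursion, one sees that the leading terms obey precisely the Virasoro/DVV recursion for $\psi$-class intersection numbers, which identifies them with \eqref{eq:FgnK}; here I would cite the Kontsevich-model computation (the leading behavior of lattice-point polytope volumes recovers the symplectic volume $\int_{\Mbar_{g,n}}\psi_1^{d_1}\cdots\psi_n^{d_n}$, cf.\ \cite{K1992, MP2010, N2}).

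Finally, the special value \eqref{eq:Euler} at $t_i=1$: one notes that $F_{g,n}^L(1,\dots,1)$ is, up to sign, the number of lattice-point polytope cells counted with automorphism weights in the limit of unit perimeters, which by the cell decomposition \eqref{eq:M=R} computes the orbifold Euler characteristic of $\cM_{g,n}$ (the alternating count of ribbon-graph cells weighted by $1/|\Aut|$ is the Euler characteristic, and the sign $(-1)^n$ accounts for the normalization in the generating function). Alternatively this follows from Theorem~\ref{thm:EOforL} combined with the known Harer--Zagier value of $\chi(\cM_{g,n})$. I expect the main obstacle to be the leading-term identification: showing cleanly that the degree-$3(2g-2+n)$ part of \eqref{eq:FgnL recursion} collapses to the DVV recursion requires matching the combinatorial factors $(2d_j-1)!!\,(t_j/2)^{2d_j+1}$ against the residue structure of the recursion kernel, and this bookkeeping — rather than any conceptual difficulty — is where the real work lies; everything else is a routine induction once the two initial values are in hand.
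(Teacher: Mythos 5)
The paper itself offers no proof of this theorem: it is imported wholesale from \cite{MP2010} (note the citation in the theorem header), and the surrounding text simply uses it to finish the proof of Corollary~\ref{cor:D=N}. Your decision to rest the differential recursion on that reference is therefore exactly what the paper does, and your sketches of the auxiliary claims are broadly consistent with how the paper treats them elsewhere (the leading-term/DVV identification is essentially the content of Section~\ref{sect:kont}). Two points in your sketch are looser than they should be. First, the obstruction to $\int_{-1}^{t_1}$ of a Laurent polynomial being a Laurent polynomial is a possible $\log t$ term coming from the $1/t$ coefficient of the integrand, not the behaviour at the lower limit $t=-1$ (the integrand's only pole is at $t=0$, so $t=-1$ is automatically a regular point); the vanishing of that $1/t$ coefficient is a nontrivial cancellation, not a consequence of the prefactor $(t^2-1)^3$. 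Second, the evaluation \eqref{eq:Euler} is more delicate than ``unit perimeters'': $t_i=1$ corresponds to $z_i=\infty$, where the defining series \eqref{eq:FgnL} formally tends to $0$, so the statement only makes sense for the particular primitive of $W_{g,n}^L$ normalized by the lower limit $t=-1$ in \eqref{eq:FgnL recursion} (one checks, e.g., that \eqref{eq:F11L} vanishes at $t=-1$ and equals $1/12=(-1)\rchi(\cM_{1,1})$ at $t=1$); an argument about cell counts at unit perimeter does not by itself pin down this constant of integration. Neither issue is fatal, since the full proofs live in \cite{MP2010}, but as written those two steps of your sketch would not survive scrutiny.
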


This completes the proof of Corollary~\ref{cor:D=N}.
\end{proof}

\section{The $\psi$-class interaction numbers
on $\Mbar_{g,n}$}
\label{sect:kont}

The crucial discovery of Konstevich
\cite {K1992} is the equality between the
intersection numbers on the compact moduli
space $\Mbar_{g,n}$ and the Euclidean 
volume of the moduli space $\cM_{g,n}$ of
smooth curves using the isomorphism 
(\ref{eq:M=R}). The Feynman diagram
expansion of the Kontsevich matrix integral
relates the Euclidean volume with a $\tau$-function 
of the KdV equations. The Eyanrd-Orantin
recursion for the $\psi$-class
intersection numbers  is precisely
the Dijkgraaf-Verlinde-Verlinde formula
\cite{DVV} of the intersection numbers.
In this section we identify the spectral curve
and the recursion kernel for the $\psi$-class
intersection numbers.

As we have noted, the derivative of the
recursion formula
(\ref{eq:FgnL recursion}) is \emph{not} 
the Eynard-Orantin recursion because the 
spectral curve is not defined by the
unstable geometries. Indeed, we have
$dF_{0,1}^L\equiv 0$. However,
when we associate the number of lattice 
points with the $\psi$-class intersection 
numbers on $\Mbar_{g,n}$, the unstable
geometries do make sense.

Let us recall a computation in \cite[Section~4]{MP2010}.
\begin{multline}
\label{eq:FgnLinA}
\sum_{\mu\in\bZ_+^n}
N_{g,n}(\mu) e^{-\la \mu,w\ra}
=
\sum_{\substack{\Gamma {\text{ ribbon graph}}\\
{\text{of type }} (g,n)}}
\sum_{\mu\in\bZ_+^n}
\frac{1}{|\Aut(\Gamma)|}
\big|\{\bx\in \bZ_+ ^{e(\Gamma)}\;|\;A_\Gamma \bx = \mu\}
\big|
 e^{-\la \mu,w\ra}
 \\
 =
 \sum_{\substack{\Gamma {\text{ ribbon graph}}\\
{\text{of type }} (g,n)}}
\frac{1}{|\Aut(\Gamma)|}
\sum_{\bx\in\bZ_+^{e(\Gamma)}}
e^{-\la A_\Gamma \bx,w\ra}
\\
 =
 \sum_{\substack{\Gamma {\text{ ribbon graph}}\\
{\text{of type }} (g,n)}}
\frac{1}{|\Aut(\Gamma)|}
\prod_{\substack{\eta \text{ edge}\\
\text{of }\Gamma}}
\sum_{\ell_\eta=1}^\infty
e^{-\la a_\eta,w\ra\ell_\eta}
\\
 =
 \sum_{\substack{\Gamma {\text{ ribbon graph}}\\
{\text{of type }} (g,n)}}
\frac{1}{|\Aut(\Gamma)|}
\prod_{\substack{\eta \text{ edge}\\
\text{of }\Gamma}}
\frac{
e^{-\la a_\eta,w\ra}}
{1-e^{-\la a_\eta,w\ra}},
\end{multline}
where $A_\Gam$ is the incidence matrix
of (\ref{eq:incidence}),  $a_\eta$ is  the
$\eta$-th column of $A_\Gam$, and 
$\la \mu,w\ra = \mu_1w_1+\dots+\mu_nw_n$.
By comparing (\ref{eq:FgnL}) and 
(\ref{eq:FgnLinA}), we see that we are substituting
$e^{w_i} = z_i$ in this computation. 
Therefore, we obtain
\begin{equation}
\label{eq:FgnLinz}
F_{g,n}^L(t_1,\dots,t_n)
=
 \sum_{\substack{\Gamma {\text{ ribbon graph}}\\
{\text{of type }} (g,n)}}
\frac{1}{|\Aut(\Gamma)|}
\prod_{\substack{\eta \text{ edge}\\
\text{of }\Gamma}}
\frac{1}
{\prod_{i=1}^n z_i^{a_{i\eta}}-1}.
\end{equation}
Thus the series (\ref{eq:FgnL}) in $z_i$ converges
for $|z_i|>1$. Since $z_i=\frac{t_i+1}{t_1-1}$,
the $t_i\rar \infty$ limit picks up the limit of
(\ref{eq:FgnL}) as $z_i\rar 1$, and hence the
information of $N_{g,n}(\mu)$ as 
$\mu_i\rar \infty$.
Since the orbifold isomorphism (\ref{eq:M=R})
is scale invariant under the action of $\bR_+$,
making the perimeter length $\mu$ large is 
the same as making the mesh small in the
lattice point counting. Hence at the limit we
obtain the Euclidean volume of $\cM_{g,n}$
considered by Kontsevich in \cite{K1992}.
This is why we expect that (\ref{eq:FgnK})  holds.
Let us now consider the limit of the spectral 
curve (\ref{eq:Dspectral}) as $t\rar\infty$.
First we have
\begin{equation*}
\begin{aligned}
x &= z+\frac{1}{z} = 2 +\frac{4}{t^2-1}
\\
y&=-z = -1-\frac{2}{t-1}.
\end{aligned}
\end{equation*}
Ignoring the constant shifts of $x$ and $y$, we obtain
for a large $t$
\begin{equation}
\label{eq:WKspectral}
\begin{cases}
x=\frac{4}{t^2}\\
y=-\frac{2}{t}.
\end{cases}
\end{equation}
Hence the spectral curve is given by
the equation $x=y^2$. We use $t$ as the
preferred coordinate.

We now compare the Eynard-Orantin
recursion with respect to this spectral curve
and the Witten-Kontsevich theory. 
We use (\ref{eq:FgnK}) and define
  \begin{equation}
 \label{eq:WgnK}
 \begin{aligned}
 {W}_{g,n}^{K}(t_1,\dots,t_n)
 &=
 d_1\cdots d_n {F}_{g,n}^{K}(t_1,\dots,t_n)
 \\
 &=
\frac{(-1)^n}{2^{2g-2+n}}
\sum_{\substack{d_1+\cdots+d_n\\=3g-3+n}}
\la \tau_{d_1}\cdots\tau_{d_n}\ra_{g,n}
\prod_{j=1}^n (2d_j+1)!! 
\left(\frac{t_j}{2}\right)^{2d_j} d\left(\frac{t_j}{2}\right)
\\
&=
\frac{(-1)^n}{16^{2g-2+n}}\;
w_{g,n}^K(t_1,\dots,t_n)\;dt_1\cdots dt_n,
\end{aligned}
 \end{equation}
 where $w_{g,n}^K(t_1,\dots,t_n)$ is the coefficient
 of the Eynard-Orantin differential form
 normalized by the constant factor 
 $\frac{(-1)^n}{16^{2g-2+n}}$. Note that  
 $w_{g,n}^K(t_1,\dots,t_n)$ is 
 a polynomial in $t_i^2$'s
 with positive rational coefficients
 for $(g,n)$ in the stable range.
For $(g,n)=(0,1)$ and $(0,2)$, we have 
\begin{align}
\label{eq:tau01}
\la \tau_k\ra_{0,1} &= \delta_{k+2,0}
\\
\label{eq:tau02}
\la\tau_{k_1}\tau_{k_2}\ra_{0,2} &= (-1)^{k_1},
\qquad k_1+k_2 = -1.
\end{align}
Therefore,
\begin{equation}
\label{eq:W01K}
W_{0,1}^K(t) = \frac{-1}{16^{-1}}\la \tau_{-2}\ra
(-3)!! \;t^{-4} dt=
\frac{16}{t^4}\; dt = ydx,
\end{equation}
in agreement with the  spectral curve $x=y^2$
  (\ref{eq:WKspectral}).
  Similarly, we have
  \begin{multline}
  \label{eq:F02K}
  F_{0,2}^K(t_1,t_2)
  =
  \sum_{d=0}^\infty (-1)^d (2d-1)!! (-2d-3)!!
  \left(\frac{t_1}{2}\right)^{2d+1}
   \left(\frac{t_2}{2}\right)^{-2d-1}
  \\
  =
  -
 \sum_{d=0}^\infty \frac{1}{2d+1}
 \left(\frac{t_1}{t_2}\right)^{2d+1}
 =
  \log\left(1-\frac{t_1}{t_2}\right)
 -\half  \log\left(1-\frac{t_1^2}{t_2^2}\right),
  \end{multline}
and hence
\begin{equation}
\label{eq:W02K}
W_{0,2}^K(t_1,t_2)
=\frac{dt_1\cdot dt_2}{(t_1-t_2)^2}
-\half\;\frac{dx_1\cdot dx_2}{(x_1-x_2)^2}.
\end{equation}
As a consequence, the recursion kernel is
given by
\begin{equation}
\label{eq:Kkernel}
K^K(t,t_1)
=
-\half
\left(
\frac{1}{t+t_1}+\frac{1}{t-t_1}
\right)
\frac{t^4}{32}\;\frac{1}{dt}\;dt_1,
\end{equation}
since $\frac{dx_1\cdot dx_2}{(x_1-x_2)^2}$
does not contribute to the kernel.
The Eynard-Orantin recursion for the
Euclidean volume then becomes
\begin{multline}
\label{eq:KEO}
W_{g,n}^K(t_1,\dots,t_n)
\\
=
-\frac{1}{2\pi i}\int_{\gam_\infty}
\left(
\frac{1}{t+t_1}+\frac{1}{t-t_1}
\right)
\frac{t^4}{64}\;\frac{1}{dt}\;dt_1
\Bigg[
W_{g-1,n+1}^K(t,-t,t_2, \dots, t_n)
\\
+
\sum_{j=2}^n
\Bigg(
\frac{dt\cdot dt_j}{(t-t_j)^2}\;
W_{g,n-1}^K(-t,t_2,\dots,\widehat{t_j},\dots,t_n)
\\
-
\frac{dt\cdot dt_j}{(t+t_j)^2}\;
W_{g,n-1}^K(t,t_2,\dots,\widehat{t_j},\dots,t_n)
\Bigg)
\\
+
\sum_{\substack{g_1+g_2=g\\
I\sqcup J=\{2,\dots,n\}}} ^{\text{stable}}
W_{g_1,|I|+1}^K(t,t_I)W_{g_2,|J|+1}^K(-t,t_J)
\Bigg],
\end{multline}
where the integral is taken with respect to 
a large negatively oriented
 circle  $\gam_\infty$ that 
encloses any of $\pm t_1,\dots,\pm t_n$.
This is the larger circle of Figure~\ref{fig:contourD}.
Here again $\frac{dx_1\cdot dx_2}{(x_1-x_2)^2}$
does not contribute in the formula.
Since the coefficients
$w_{g,n}^K(t_1,\dots,t_n)$ in the stable
range are polynomials,
 the   poles
of the integrand of
(\ref{eq:KEO}) in the integration coutour
are at $t=\pm t_i$'s. 
Therefore, we can  perform the 
integral in terms of the residue
calculus at poles  $t=\pm t_i$.
First let us get rid of the factor $1/16^{2g-2+n}$
from (\ref{eq:KEO}).
Since the recursion is
an induction on $2g-2+n$, we have an overall 
factor $16$ adjustment on the right-hand side.
The integration contour is negatively 
oriented, so the residue calculation 
at $t=\pm t_i$ receives universally 
the negative sign. This sign  is
exactly cancelled by the choice
of the sign of $w_{g,n}^K$  in (\ref{eq:WgnK}).
Thus the result of residue evaluation 
of (\ref{eq:KEO}) is
\begin{multline}
\label{eq:KEOinw}
w_{g,n}^K(t_1,\dots,t_n)
=
\half\;t_1^4
w_{g-1,n+1}^K(t_1,t_1,t_2, \dots, t_n)
\\
+
\half\;t_1^4
\sum_{\substack{g_1+g_2=g\\
I\sqcup J=\{2,\dots,n\}}} ^{\text{stable}}
w_{g_1,|I|+1}^K(t_1,t_I)w_{g_2,|J|+1}^K(t_1,t_J)
\\
+
{t_1^4}
\sum_{j=2}^n
\frac{t_1^2+t_j^2}{(t_1^2-t_j^2)^2}\;
w_{g,n-1}^K(t_1,\dots,\widehat{t_j},\dots,t_n)
\\
+
\half
\sum_{j=2}^n
\left(
\left.
\frac{\partial}{\partial t}
\right|_{t=t_j}+
\left.
\frac{\partial}{\partial t}
\right|_{t=-t_j}
\right)
\left(
\frac{1}{t^2-t_1^2}\; t^5
w_{g,n-1}^K(t,t_2,\dots,\widehat{t_j},\dots,t_n)
\right)
\\
=
\half\;t_1^4
\left[
w_{g-1,n+1}^K(t_1,t_1,t_2, \dots, t_n)
+
\sum_{\substack{g_1+g_2=g\\
I\sqcup J=\{2,\dots,n\}}} ^{\text{stable}}
w_{g_1,|I|+1}^K(t_1,t_I)w_{g_2,|J|+1}^K(t_1,t_J)
\right]
\\
+
\sum_{j=2}^n
\frac{\partial}{\partial t_j}
\left[
\frac{t_j}{t_1^2-t_j^2}
\bigg(
t_1^4 w_{g,n-1}^K(t_{[n]\setminus \{j\}})
-t_j^4 w_{g,n-1}^K(t_{[n]\setminus \{1\}})
\bigg)
\right].
\end{multline}
This is the same as \cite[Theorem~5.2]{CMS}.

 Let us adopt the 
normalized notation
\begin{equation}
\label{eq:sigma}
\la \sigma_{d_1}\cdots\sigma_{d_n}\ra_{g,n}
=\la \tau_{d_1}\cdots\tau_{d_n}\ra_{g,n}
\prod_{i=1}^n (2d_i+1)!!
\end{equation}
to make the formula shorter. 
Then 
\begin{equation}
\label{eq:wgnK}
w_{g,n}^K(t_1,\dots,t_n)
=
\sum_{d_1,\dots,d_n}
\la \sigma_{d_1}\cdots\sigma_{d_n}\ra_{g,n}
\prod_{j=1}^nt_j^{2d_j}.
\end{equation}
The DVV formula \cite{DVV}
for the
Virasoro constraint condition on the $\psi$-class
intersection numbers on $\Mbar_{g,n}$  reads 
\begin{multline}
\label{eq:DVV}
 \la \sigma_{k}\prod_{i=2}^n \sigma_{d_i}\ra _{g,n}
=
\frac{1}{2} \sum_{a+b=k-2}
\la \sigma_a\sigma_b\prod_{i=2}^n \sigma_{d_i}
\ra _{g-1,n+1}
\\
+
\frac{1}{2} \sum_{a+b=k-2}
\sum_{\substack{g_1+g_2=g\\
I\sqcup J= \{2,\dots,n\}}} ^{
\text{stable}}
\la \sigma_a\prod_{i\in I}\sigma_{d_i}\ra _{g_1,|I|+1}\cdot
\la \sigma_b\prod_{j\in J}\sigma_{d_j}\ra _{g_2,|J|+1}
\\
+
\sum_{j=2}^n (2d_j+1)  
 \la \sigma_{k+d_j-1}\prod_{i\ne1, j}
 \sigma_{d_i}
\ra _{g,n-1}.
\end{multline}
We thus recover the discovery of \cite{EO1}:

\begin{thm}
\label{thm:EO=WK}
The Eynard-Orantin recursion formula for the
spectral curve $x=y^2$ is the Dijkgraaf-Verlinde-Verlinde
formula \cite{DVV} for the intersection numbers
$\la \tau_{d_1}\cdots\tau_{d_n}\ra_{g,n}$
on the moduli space $\Mbar_{g,n}$ of 
pointed stable curves. 
\end{thm}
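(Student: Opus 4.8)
The plan is to verify that the residue-evaluated recursion (\ref{eq:KEOinw}), rewritten through (\ref{eq:wgnK}) in terms of the normalized intersection numbers $\la\sigma_{d_1}\cdots\sigma_{d_n}\ra_{g,n}$, is literally the DVV formula (\ref{eq:DVV}), monomial by monomial. Most of the groundwork is already in place: (\ref{eq:W01K}) and (\ref{eq:W02K}) show that $W_{0,1}^K$ and $W_{0,2}^K$ are precisely the $(0,1)$ and $(0,2)$ data of the spectral curve $x=y^2$, so (\ref{eq:Kkernel}) is the Eynard-Orantin kernel of that curve; and polynomiality of $w_{g,n}^K$ in $t_1^2,\dots,t_n^2$ in the stable range licenses the residue computation producing (\ref{eq:KEOinw}) from (\ref{eq:KEO}). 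What remains is purely algebraic.

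First I would substitute $w_{g,n}^K(t_1,\dots,t_n)=\sum\la\sigma_{d_1}\cdots\sigma_{d_n}\ra_{g,n}\prod_j t_j^{2d_j}$ into both sides of (\ref{eq:KEOinw}) and read off the coefficient of $t_1^{2k}t_2^{2d_2}\cdots t_n^{2d_n}$. The left side gives $\la\sigma_k\prod_{i\ge2}\sigma_{d_i}\ra_{g,n}$, the left side of (\ref{eq:DVV}). In the term $\tfrac12 t_1^4\,w_{g-1,n+1}^K(t_1,t_1,t_2,\dots,t_n)$ the prefactor $t_1^4$ forces $2a+2b+4=2k$, i.e.\ $a+b=k-2$, producing $\tfrac12\sum_{a+b=k-2}\la\sigma_a\sigma_b\prod\sigma_{d_i}\ra_{g-1,n+1}$; the splitting term $\tfrac12 t_1^4\sum^{\mathrm{stable}}w_{g_1,|I|+1}^K(t_1,t_I)w_{g_2,|J|+1}^K(t_1,t_J)$ is handled identically and yields the second line of (\ref{eq:DVV}).

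The substantive point is the last, $\partial/\partial t_j$, term of (\ref{eq:KEOinw}). Using symmetry of $w_{g,n-1}^K$, write both of its occurrences as $\sum_m\phi_m(t_{[n]\setminus\{1,j\}})\,s^{2m}$ with $s=t_1$ and $s=t_j$ respectively, so the bracket becomes $\sum_m\phi_m\,\frac{t_j}{t_1^2-t_j^2}\big(t_1^{2m+4}-t_j^{2m+4}\big)$. The elementary identity
\begin{equation*}
\frac{t_j}{t_1^2-t_j^2}\big(t_1^{2m+4}-t_j^{2m+4}\big)=\sum_{a=0}^{m+1}t_1^{2a}\,t_j^{2(m+1-a)+1}
\end{equation*}
then gives, after $\partial/\partial t_j$, the polynomial $\sum_{a=0}^{m+1}(2(m+1-a)+1)\,t_1^{2a}t_j^{2(m+1-a)}$. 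Extracting the coefficient of $t_1^{2k}t_j^{2d_j}$ forces $a=k$ and $m=k+d_j-1$ with weight $2d_j+1$, and $\phi_{k+d_j-1}$ carries $\la\sigma_{k+d_j-1}\prod_{i\ne1,j}\sigma_{d_i}\ra_{g,n-1}$; summing over $j$ reproduces $\sum_{j=2}^n(2d_j+1)\la\sigma_{k+d_j-1}\prod_{i\ne1,j}\sigma_{d_i}\ra_{g,n-1}$, the third line of (\ref{eq:DVV}). Thus (\ref{eq:KEOinw}) $\equiv$ (\ref{eq:DVV}); since $W_{0,1}^K,W_{0,2}^K$ encode the same initial data as (\ref{eq:tau01})--(\ref{eq:tau02}) and both the Eynard-Orantin recursion and DVV determine all $\la\tau_{d_1}\cdots\tau_{d_n}\ra_{g,n}$ from that data, the two theories coincide.

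I expect the only real obstacle to be bookkeeping rather than ideas: carrying the constants $(-1)^n/16^{2g-2+n}$ and the $(2d_i+1)!!$ absorbed into $\sigma_{d_i}$ consistently, matching the ``stable'' restrictions on the sums on the two sides, and checking that the formally unstable pieces $(g,n-1)=(0,1),(0,2)$ that appear in the last term of (\ref{eq:KEOinw}) contribute $0$ to the relevant coefficients, in accordance with (\ref{eq:tau01})--(\ref{eq:tau02}). The combinatorial core is the one-line identity above together with a single differentiation, which is why the theorem is essentially a repackaging of the computation preceding it.
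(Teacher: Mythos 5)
Your proposal is correct and follows essentially the same route as the paper: extract the coefficient of $t_1^{2k}\prod_{j\ge2}t_j^{2d_j}$ from (\ref{eq:KEOinw}) and match it term by term with (\ref{eq:DVV}), the first two terms being forced by the prefactor $t_1^4$. The only difference is cosmetic --- you combine the two pieces of the $\partial/\partial t_j$ term into a single polynomial via the finite geometric-sum identity before differentiating, whereas the paper expands $1/(t_1^2-t_j^2)$ as a series for $|t_j|<|t_1|$ and observes that the second piece contributes no nonnegative powers of $t_1$; both yield the same weight $2d_j+1$.
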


\begin{proof}
We  extract the coefficient of 
\begin{equation}
\label{eq:kterm}
t_1^{2k}\prod_{j=2}^n
t_j^{2d_j}
\end{equation}
 in (\ref{eq:KEOinw}) and compare the result 
 with 
 (\ref{eq:DVV}). 
It is obvious that the fifth line
of (\ref{eq:KEOinw}) produces the first and second
lines of
(\ref{eq:DVV}).

To compare the last
lines of (\ref{eq:KEOinw}) and 
 (\ref{eq:DVV}),
we consider the case $|t_j|<|t_1|$ for all
$j\ge 2$ in (\ref{eq:KEOinw}). 
We then have the expansion
$$
\frac{1}{t_1^2-t_j^2}
=
\frac{1}{t_1^2}\;
\frac{1}{1-\frac{t_j^2}{t_1^2}}
=
\frac{1}{t_1^2}\sum_{m=0} ^\infty 
\left(
\frac{t_j^2}{t_1^2}\right)^m.
$$
The (\ref{eq:kterm})-term of the last line
of (\ref{eq:KEOinw}) has two contributions. 
The first one comes from
$$
\frac{\partial}{\partial t_j}\left(
{t_1^2}t_j\sum_{m=0} ^\infty 
\left(
\frac{t_j^2}{t_1^2}\right)^m
w_{g,n-1}^K(t_1,t_2,\dots,\widehat{t_j},\dots,t_n)
\right).
$$
Since 
$w_{g,n-1}^K(t_1,t_2,\dots,\widehat{t_j},\dots,t_n)$
does not contain $t_j$,
we set $m=d_j$
to produce the right power $2d_j$ of $t_j$.
The power of $t_1$ has to be
$2k$. Thus from $w_{g,n-1}^K$
we take the term of $t_1^{2k+2d_j-2}$,
whose coefficient is
$\la \sigma_{k+d_j-1}\prod_{i\ne 1, j}
\sigma_{d_i}\ra$.
The total contribution from the first kind comes from
the differentiation, which gives
$2m+1=2d_j+1$.

The second possible contribution for
the (\ref{eq:kterm})-term may come from 
$$
-\frac{\partial}{\partial t_j}\left(
\frac{t_j^5}{t_1^2}\sum_{m=0} ^\infty 
\left(
\frac{t_j^2}{t_1^2}\right)^m
w_{g,n-1}^K(t_2,\dots,t_n)
\right).
$$
However, this term does not 
produce $t_1^{2k}$, and hence does not
contribute to the (\ref{eq:kterm})-term.
This completes the proof of Theorem~\ref{thm:EO=WK}.
\end{proof}

\section{Single Hurwitz numbers}
\label{sect:Hurwitz}

\emph{What is the mirror dual of the 
number of trees?} The answer we wish to 
present in this section is that it is the 
\emph{Lambert curve}.
This analytic curve serves as the 
spectral curve for the Hurwitz counting
problem, and comes up from the  
the unstable  geometries
 $(g,n)=(0,1)$ and $(0,2)$ via Laplace transform.

A \emph{Hurwitz} cover is a holomorphic mapping 
$f:C\rightarrow \mathbb{P}^1$ from 
a connected nonsingular projective 
algebraic curve $C$ of genus $g$ to 
the projective line $\mathbb{P}^1$ with only simple 
ramifications except for $\infty \in\mathbb{P}^1$. 
Such a cover is further refined by specifying its
\emph{profile}, which is a partition 
$\mu = (\mu_1\ge \mu_2 \ge
\cdots \ge \mu_n >0)$ of  the degree of the covering 
$d = |\mu| =
\mu_1 + \cdots + \mu_n$.  The length  $\ell(\mu) = n$
 of this partition
is the number of points in the inverse image  
$f^{-1}(\infty) 
= \{p_1,\dots,p_n\}$ of 
$\infty$. 
Each part $\mu_i$ gives a local 
description of the map $f$,
which is given by $u\longmapsto u^{-\mu_i}$ 
in terms of a local
coordinate $u$ of $C$ around $p_i$. 
The number 
$h_{g,\mu}$ of the topological
types of Hurwitz covers of a given genus $g$ and 
a profile 
$\mu$, counted with the weight factor $1/|\Aut f|$, 
is the \emph{single Hurwitz
number} we shall deal with in this section.

The deformations of a Hurwitz cover $f$ are
obtained by moving the branch points 
(i.e., the image of the ramification points)
on $\bP^1\setminus \{\infty\}$. Thus $h_{g,\mu}$ counts
the number of Hurwitz covers with prescribed
(i.e., fixed) and \emph{labeled} branch points. 
On the other hand, the preimages of
$\infty$ on $C$ are labeled only by 
the parts of $\mu$. Therefore, 
a more natural count of Hurwitz cover is
\begin{equation}
\label{eq:Hg(mu)}
H_g(\mu)= 
\frac{|\Aut(\mu)|}{(2g-2+n+
|\mu|)!}\cdot h_{g,\mu}.
  \end{equation}
Here, 
\begin{equation}
\label{eq:r}
r=r(g,\mu)\overset{\text{def}}{=}2g-2+n+|\mu| 
\end{equation}
is the number of simple ramification points
of $f$ by the Riemann-Hurwitz formula,
and 
$\Aut(\mu)$ is the group of permutations
of equal parts of the partition $\mu$.

One reason that explains why single 
Hurwitz numbers are interesting is 
a remarkable formula due to 
Ekedahl, Lando, Shapiro and Vainshtein
\cite{ELSV, GV1, Liu, OP1} that
relates Hurwitz numbers and Gromov-Witten
invariants.  For genus
$g\ge 0$ and a partition $\mu$ 
of length $\ell(\mu)=n$ subject to the 
stability condition
 $2g - 2 +n >0$, the ELSV formula states that
\begin{multline}
\label{eq:ELSV}
H_g(\mu)=
	 \prod_{i=1}^{n}
	 \frac{\mu_i ^{\mu_i}}{\mu_i!}
	 \int_{\overline{\mathcal{M}}_{g,n}} 
	 \frac{\Lambda_g^{\vee}(1)}
	 {\prod_{i=1}^{n}\big( 1-\mu_i \psi_i\big)}
	 \\
	 =
	 \sum_{j=0}^g (-1)^j
	 \sum_{k_1,\dots,k_n\ge 0}
	 \la \tau_{k_1}\cdots \tau_{k_{n}}c_{j}(\bE)\ra 
	   \prod_{i=1}^{n}
	 \frac{\mu_i ^{\mu_i+k_i}}{\mu_i!},
\end{multline}
where $\overline{\mathcal{M}}_{g,n}$ is 
the Deligne-Mumford moduli stack of stable algebraic
curves of genus $g$ with $n$ distinct smooth
marked points, 
$\Lambda_g^{\vee}(1)=1-c_{1}(\bE)+\cdots +(-1)^g
c_{g}(\bE)$ is the alternating sum of the
Chern classes of the Hodge bundle $\bE$  
 on $\overline{\mathcal{M}}_{g,n}$, 
$\psi_i$ is the $i$-th tautological cotangent class, 
and 
\begin{equation}
\label{eq:LHI}
\la \tau_{k_1}\cdots \tau_{k_n}c_{j}(\bE)\ra= \int_{\overline{\mathcal{M}}_{g,n}}\psi_1^{k_1}\cdots\psi_\ell ^{k_n}c_j(\bE)
\end{equation}
is the linear Hodge integral,
which is $0$ unless $k_1+\cdots+k_n +j=3g-3+n$.

The Deligne-Mumford stack 
$\Mbar_{g,n}$
is defined as the moduli space of \emph{stable} curves satisfying the
stability condition
$2-2g-n <0$.  However, single Hurwitz numbers 
are well defined for \emph{unstable} geometries
 $(g,n) = (0,1)$ and $(0,2)$, and their values are
\begin{equation}
\label{eq:unstableHurwitz}
H_0((d)) = \frac{d^{d-3}}{(d-1)!}
=\frac{d^{d-2}}{d!}
\qquad \text{and}\qquad
H_0((\mu_1,\mu_2)) =
 \frac{1}{\mu_1+\mu_2}\cdot
\frac{\mu_1^{\mu_1}}{\mu_1!}\cdot \frac{\mu_2^{\mu_2}}{\mu_2!}.
\end{equation}
The ELSV formula remains valid for unstable cases
by \emph{defining}
\begin{align}
\label{eq:01Hodge}
&\int_{\overline{\cM}_{0,1}} \frac{\Lambda_0 ^\vee (1)}{1-d\psi}
=\frac{1}{d^2},\\
\label{eq:02Hodge}
&\int_{\overline{\cM}_{0,2}} 
\frac{\Lambda_0 ^\vee (1)}{(1-\mu_1\psi_1)(1-\mu_2\psi_2)}
=\frac{1}{\mu_1+\mu_2}.
\end{align}

Let us examine the $(g,n)=(0,1)$ case. 
We wish to count the number of Hurwitz covers
$f:\bP^1\lrar\bP^1$ of degree $d$
with profile $\mu = (d)$. If $d=2$, then 
$f(u) = u^2$ is the only map, since $r=1$ and 
the two ramification points can be placed at
$u=0$ and $u=\infty$. The automorphism of 
this map is $\bZ/2\bZ$.
We now consider the case when $d\ge 3$.
First we label all 
branch points. One is $\infty$, so let us
place all others,
the images of simple ramification points,
at the $r$-th roots of unity. Here $r=d-1$. 
We label these points with indices 
$[r]=\{1,2,\dots,r\}$.
Connect each
$r$-th root of unity with the origin by a 
straight line (see Figure~\ref{fig:H01}).
Let $*$ denote this star-like shape, which 
has one vertex at the center and $r$ half-edges.
Then the inverse image $f^{-1}(*)$
is a tree-like shape with $d$ vertices and
$rd$ half-edges. Here we call 
each inverse image of $0$ a vertex
of $f^{-1}(*)$. If $f$ is simply ramified at
$p$, then two half-edges are connected
at $p$ and form a real edge that is incident
to two vertices. Since $f(p)$ is one
of the $r$-th root of unity, we give the 
same label to $p$. Thus all simple 
ramification points are labeled with 
the index set $[r]$. 
Now we remove all
half-edges from $f^{-1}(*)$
that are not made into an edge, and denote it by $T$. 
It is  a tree on $\bP^1$ that has
$d$ vertices and $r=d-1$ edges.
Note that except for the case $d=2$, 
the edge labeling gives a labeling of vertices.
For example, if a vertex $x$ is incident to 
edges $i_1<i_2<\dots<i_k$, then $x$ is
labeled by $i_1i_2\cdots i_k$.

\begin{figure}[htb]
\centerline{\epsfig{file=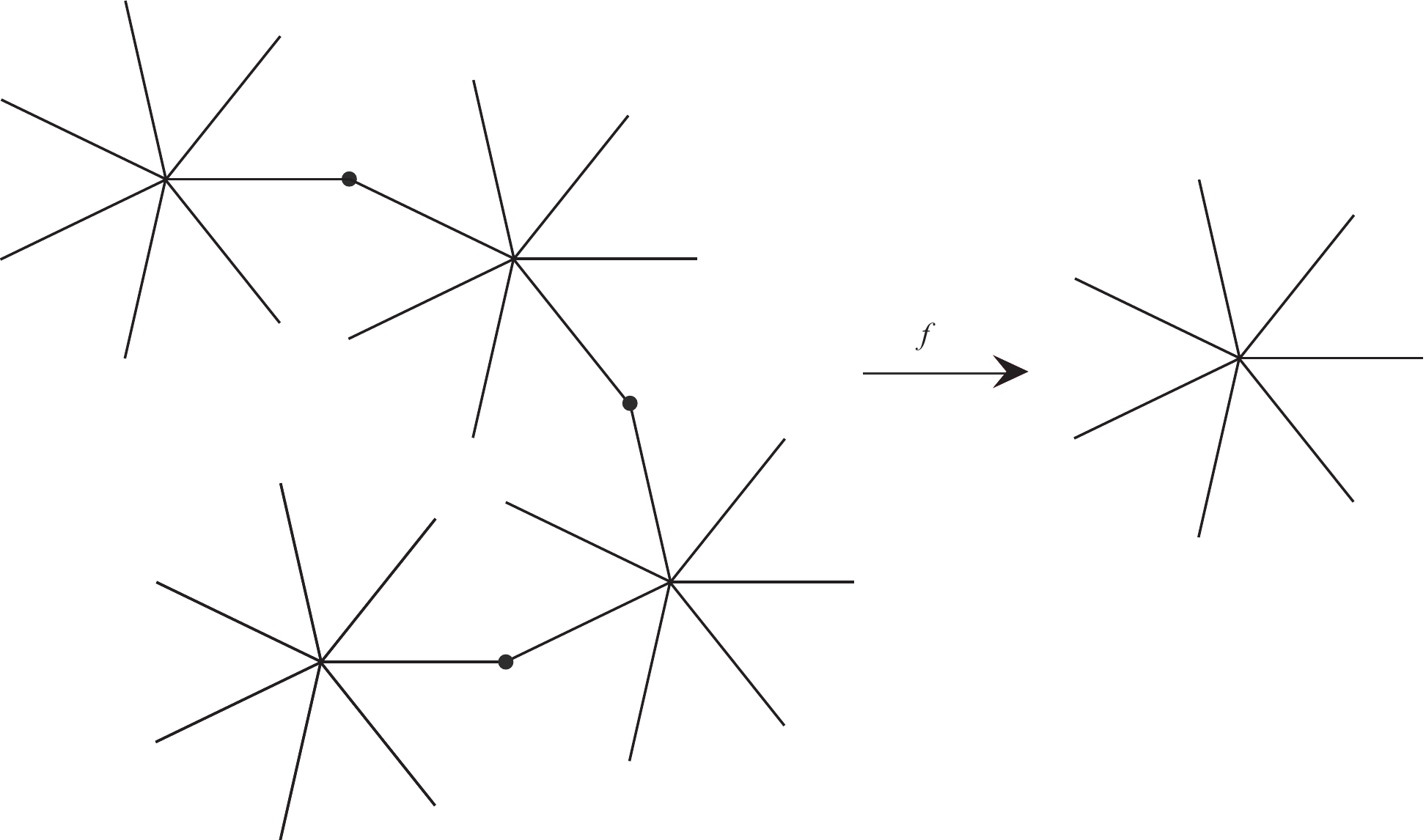, width=4in}}
\caption{Counting the genus $0$ single
Hurwitz numbers with the total ramification
at $\infty$.}
\label{fig:H01}
\end{figure}

Conversely, suppose we are given a tree
 with $d$ labeled vertices 
 by the index set $[d]=\{1,2,\dots,d\}$ 
 and $r=d-1$ edges.
At each vertex we can give a cyclic 
order to incident edges by aligning them
in the increasing order
of the labels of the other ends
of the edges. Thus the tree becomes
a \emph{ribbon graph}
(see Section~\ref{sect:dessin}), and hence it
can be placed on $\bP^1$.
Then 
by choosing the midpoint of each edge
as a simple ramification point and each vertex
as a zero of $f$, we can construct a Hurwitz cover.
Recall that the number of trees with $d$ 
labeled vertices is $d^{d-2}$. 
Therefore, 
$$
H_0((d)) = \frac{d^{d-2}}{d!}
$$
is the number of trees with 
$d$ \emph{unlabeled}
 vertices.

Fix an $n\ge 1$, and consider a partition
$\mu$ of length $n$ as an $n$-dimensional vector
$$
\mu = (\mu_1,\dots,\mu_n)\in \bZ_+^n
$$
consisting of positive integers. 
The Laplace transform of 
$H_g(\mu)$ as a function in $\mu$, 
\begin{equation}
\label{eq:Hgn}
H_{g,n}(w_1,\dots,w_n)
=
\sum_{\mu\in\bZ_+^n} H_g(\mu)
e^{-\left(
\mu_1(w_1+1)+\cdots+\mu_n(w_n+1)
\right)},
\end{equation}
is the function we wish to compute. 
Note that the automorphism group 
$\Aut (\mu)$ acts trivially on the function
$e^{-\left(
\mu_1(w_1+1)+\cdots+\mu_n(w_n+1)
\right)}$, which explains its appearance in 
(\ref{eq:Hg(mu)}).
The reason for  shifting the variables
$w_i\longmapsto w_i+1$ is due to the 
asymptotic behavior
$$
\frac{\mu ^{\mu+k}}{\mu!} e^{-\mu}\sim
\frac{1}{\sqrt{2\pi}}\;\mu^{k-\half}
$$
as $\mu$ approaches to $\infty$. This asymptotics
also suggests that the holomorphic
function $H_{g,n}(w_1,\dots,w_n)$ is actually 
defined on a double-sheeted overing on the
$w_i$-plane, since
$\sqrt{w_i}$  behaves better as a 
holomorphic coordinate.

Following \cite{EMS, MZ},
we  introduce a series of polynomials $\hxi_n(t)$ of
degree $2n+1$ in $t$ for $n\ge 0$ by the
recursion formula
\begin{equation}
\label{eq:xihatn}
\hxi_{n}(t) = t^2(t-1)\frac{d}{dt}\hat{\xi}_{n-1}(t)
\end{equation}
with the initial condition
$\hxi_{0}(t) =t-1$. 
This differential operator appears in \cite{GJV2}.
The functions $\hxi_{-1}(t)$ and $\hxi_0(t)$
also appear as
 the two fundamental functions in \cite{Zvonkine}
 that generate his algebra $\cA$.
These polynomials are introduced to 
make the computation 
 of the Laplace transform (\ref{eq:Hgn}) easier.

\begin{prop}[\cite{CGHJK, EMS}]
\label{prop:twxy}
Let us introduce new coordinates
\begin{equation}
\label{eq:xyt}
x=e^{-w},\qquad 
z=\sum_{\mu=1}^\infty
\frac{\mu^{\mu-1}}{\mu!}\;e^{-\mu}\;x^\mu,\qquad
t-1=\sum_{\mu=1}^\infty
\frac{\mu^{\mu}}{\mu!}\;e^{-\mu}\;x^\mu.
\end{equation}
Then the inverse function of $z=z(x)$ is given by
\begin{equation}
\label{eq:Lambert}
x=z e^{1-z},
\end{equation}
and the variables $z$ and $t$ are related by
\begin{equation}
\label{eq:z in t for H}
z=\frac{t-1}{t}.
\end{equation}
Moreover, we have 
\begin{equation}
\label{eq:xihatninx}
\hxi_n(t) = 
\sum_{\mu=1}^\infty 
\frac{\mu^{\mu+n}}{\mu!}\;e^{-\mu(w+1)}
=
\sum_{\mu=1}^\infty 
\frac{\mu^{\mu+n}}{\mu!}\;e^{-\mu}\;x^\mu
\end{equation}
for $n\ge 0$.
\end{prop}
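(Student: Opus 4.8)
The plan is to recognize $z=z(x)$ as a rescaling of the classical tree function and then to transport every other assertion using one change-of-variables formula for the operator $x\,\frac{d}{dx}$.

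\textbf{Step 1 (the Lambert relation \eqref{eq:Lambert}).} By Stirling, the series $z(x)=\sum_{\mu\ge1}\frac{\mu^{\mu-1}}{\mu!}e^{-\mu}x^\mu$ has radius of convergence $1$, so all manipulations below are valid formally and analytically for $|x|<1$. Writing the proposed inverse in the form $z=x\,e^{z-1}=x\,\phi(z)$ with $\phi(z)=e^{z-1}$, $\phi(0)\ne0$, Lagrange inversion gives
\[
[x^\mu]\,z=\frac{1}{\mu}\,[z^{\mu-1}]\,\phi(z)^\mu
=\frac{1}{\mu}\,[z^{\mu-1}]\,e^{\mu(z-1)}
=\frac{e^{-\mu}}{\mu}\cdot\frac{\mu^{\mu-1}}{(\mu-1)!}
=\frac{\mu^{\mu-1}}{\mu!}\,e^{-\mu},
\]
which is exactly the defining coefficient of $z(x)$; hence $x=ze^{1-z}$ is the functional inverse of $z=z(x)$ near the origin. (Equivalently, $z(x)=T(e^{-1}x)$ for the tree function $T(y)=ye^{T(y)}$, which gives $x=ze^{1-z}$ at once.)

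\textbf{Step 2 (the relation \eqref{eq:z in t for H}).} Applying $x\,\frac{d}{dx}$ to the series for $z$ multiplies the $\mu$-th coefficient by $\mu$, turning it into the series for $t-1$; thus $x\,z'(x)=t-1$. On the other hand, differentiating $x=ze^{1-z}$ gives $1=z'e^{1-z}(1-z)$, hence $x\,z'=\frac{z}{1-z}$. Comparing yields $t-1=\frac{z}{1-z}$, i.e.\ $t=\frac{1}{1-z}$, which is \eqref{eq:z in t for H}. This also records $1-z=1/t$ and, via $\frac{dt}{dx}=\frac{z'}{(1-z)^2}=t^2z'$, the crucial operator identity
\[
x\,\frac{d}{dx}=x\,\frac{dt}{dx}\,\frac{d}{dt}=t^2(t-1)\,\frac{d}{dt}.
\]

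\textbf{Step 3 (identifying $\hxi_n$ and concluding).} Since $x=e^{-w}$, the two sums in \eqref{eq:xihatninx} agree term by term, so it suffices to prove $\hxi_n(t)=\sum_{\mu\ge1}\frac{\mu^{\mu+n}}{\mu!}e^{-\mu}x^\mu=:\eta_n(x)$ for $n\ge0$. For $n=0$ this is $\eta_0=t-1=\hxi_0(t)$ by Step 2. For the induction step, $\eta_n(x)=x\frac{d}{dx}\eta_{n-1}(x)$ at the level of series, and by Step 2 the operator $x\frac{d}{dx}$ is $t^2(t-1)\frac{d}{dt}$ in the $t$-coordinate, which is precisely the recursion \eqref{eq:xihatn} defining $\hxi_n$. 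The same recursion, together with the fact that $t^2(t-1)\frac{d}{dt}$ increases polynomial degree by $2$, shows by induction that $\hxi_n(t)$ is a polynomial of degree $2n+1$, as claimed above the proposition. I expect the only step needing genuine care to be the Lagrange-inversion bookkeeping in Step 1 — in particular keeping the normalization $\phi(z)=e^{z-1}$ rather than $e^{z}$, so that the $e^{-\mu}$ factors come out correctly; everything afterward is a mechanical application of the chain rule once the operator identity of Step 2 is in hand.
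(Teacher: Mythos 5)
Your proof is correct and follows essentially the same route as the paper: Lagrange inversion for $x=ze^{1-z}$, the operator identity $x\,\frac{d}{dx}=t^2(t-1)\frac{d}{dt}$, and induction on $n$ via the defining recursion for $\hxi_n$. The only (minor) variation is in deriving $z=\frac{t-1}{t}$ — you compare $xz'(x)$ computed from the series with $xz'=\frac{z}{1-z}$ from the Lambert relation, whereas the paper solves the ODE $t^2(t-1)\frac{dz}{dt}=t-1$ with the initial condition at $t=1$; both are sound.
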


\begin{proof}
The infinite series (\ref{eq:xihatninx}) has the
radius of convergence $1$, and for $|x|<1$, 
we can apply the Lagrange inversion formula
to obtain  (\ref{eq:Lambert}). 
Since the application of
$$
-\frac{d}{dw} = x\;\frac{d}{dx} = 
t^2(t-1)\frac{d}{dt}
$$
$n$-times to 
$
\sum_{\mu=1}^\infty 
\frac{\mu^{\mu}}{\mu!}\;e^{-\mu(w+1)}
$
produces 
$
\sum_{\mu=1}^\infty 
\frac{\mu^{\mu+n}}{\mu!}\;e^{-\mu(w+1)}
$, we obtain (\ref{eq:xihatn}). 
If we extend (\ref{eq:xihatninx}) formally to $n=-1$,
then we have $z=\hxi_{-1}(t)$.
To obtain the expression of $z$ as a function of $t$, we
need to solve the differential equation 
$$
t^2(t-1)\frac{d}{dt}\cdot z=
t-1 .
$$
Its solution is $z= c-\frac{1}{t}$.
Since $x=0\Longleftrightarrow z=0$ and 
$x=0\Longrightarrow t=1$, we conclude that 
the constant of integration is $c=1$. Thus
$z = 1-1/t$.
\end{proof}

\begin{rem}
The relation between our $z$ as a function 
in $x$ and 
the classical Lambert W-function
(see for example, \cite{CGHJK}) is
$$
z(x) = -W(-x/e).
$$
\end{rem}

Because of the ELSV formula (\ref{eq:Hg(mu)}),
the Laplace transform of $H_g(\mu)$ becomes
a polynomial in $t_i,\dots,t_n$ for
 $(g,n)$ in the stable range. 
 The result is
 \begin{multline}
 \label{eq:FgnH}
 F_{g,n}^H(t_1,\dots,t_n)
 =H_{g,n}(w(t_1),\dots,w(t_n))
 \\
 =
 \sum_{\mu\in\bZ_+^n} H_g(\mu)
e^{-\left(
\mu_1(w_1+1)+\cdots+\mu_n(w_n+1)
\right)}
\\
=
\sum_{\mu\in\bZ_+^n}
\sum_{k_1+\cdots+k_n \le 3g-3+n} \la 
\tau_{k_1}\cdots \tau_{k_n}\Lambda_g^{\vee}(1)\ra
\,\,\prod_{i=1}^{n}
\frac{\mu_i^{\mu_i+k_i}}{\mu_i!}
e^{-\left(
\mu_1(w_1+1)+\cdots+\mu_n(w_n+1)
\right)}
\\
=
\sum_{k_1+\cdots+k_n \le 3g-3+n} \la 
\tau_{k_1}\cdots \tau_{k_n}\Lambda_g^{\vee}(1)\ra
\,\,\prod_{i=1}^{n}
\hxi_{k_i}(t_i).
 \end{multline}
The Laplace transform (\ref{eq:FgnH}) 
is no longer a polynomial for the unstable geometries
$(g,n) = (0,1)$ and $(0,2)$. Wel use
(\ref{eq:unstableHurwitz}) to calculate
$F_{0,1}^H$ and $F_{0,2}^H$.

\begin{thm}
\label{thm:F01H}
The Laplace transform of the unstable 
 cases $(g,n) = (0,1)$ and $(0,2)$ are given by
 \begin{equation}
 \label{eq:F01H}
 F_{0,1}^H(t) 
= 
\half\left(1-\frac{1}{t^2}\right)
\end{equation}
and
\begin{equation}
\label{eq:F02H}
F_{0,2}^H(t_1,t_2) 
=\log\left(
\frac{z_1-z_2}
{x_1-x_2}
\right)
-(z_1+z_2) +1,
\end{equation}
where $t_i,x_i,z_i$ are related by
(\ref{eq:Lambert}) and (\ref{eq:z in t for H}).
\end{thm}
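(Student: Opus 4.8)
The plan is to compute the two Laplace transforms directly from the explicit values \eqref{eq:unstableHurwitz} of the unstable single Hurwitz numbers, using Proposition~\ref{prop:twxy} to convert sums over $\mu$ into the coordinate $t$. For the $(0,1)$ case, start from
$$
F_{0,1}^H(t)=\sum_{\mu=1}^\infty H_0((\mu))\,e^{-\mu(w+1)}
=\sum_{\mu=1}^\infty \frac{\mu^{\mu-2}}{\mu!}\,e^{-\mu}\,x^\mu,
$$
and observe that this is a primitive (in $w$, equivalently in the operator $-d/dw=t^2(t-1)\,d/dt$) of the series $\sum \mu^{\mu-1}x^\mu/\mu! = z = \hxi_{-1}(t)$, which by Proposition~\ref{prop:twxy} equals $1-1/t$. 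So I would solve $t^2(t-1)\,\frac{d}{dt}F_{0,1}^H(t) = -z(t)\cdot(\text{sign bookkeeping})$; more precisely, since applying $-d/dw$ once to $F_{0,1}^H$ gives $\sum \mu^{\mu-1}e^{-\mu(w+1)}=z$, I integrate $dF_{0,1}^H = -z\,dw$. Using $x=ze^{1-z}$ we get $dw=-dx/x=-(1/z-1)\,dz$, hence $dF_{0,1}^H = -z\cdot(-(1/z-1))\,dz = (1-z)\,dz$, which integrates to $z-\tfrac12 z^2$ up to a constant; fixing the constant by $F_{0,1}^H\to 0$ as $x\to 0$ (i.e. $z\to 0$) gives $F_{0,1}^H = z-\tfrac12 z^2 = \tfrac12(1-(1-z)^2)=\tfrac12\big(1-\tfrac1{t^2}\big)$ using $1-z=1/t$. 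This is exactly \eqref{eq:F01H}.

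For the $(0,2)$ case the strategy is the same but now in two variables. Write
$$
F_{0,2}^H(t_1,t_2)=\sum_{\mu_1,\mu_2>0}\frac{1}{\mu_1+\mu_2}\,\frac{\mu_1^{\mu_1}}{\mu_1!}\frac{\mu_2^{\mu_2}}{\mu_2!}\,e^{-\mu_1(w_1+1)-\mu_2(w_2+1)},
$$
and kill the awkward denominator $\mu_1+\mu_2$ by applying the Euler operator $-\partial_{w_1}-\partial_{w_2}$ (equivalently $x_1\partial_{x_1}+x_2\partial_{x_2}$), which produces the product $\big(\sum_{\mu_1}\mu_1^{\mu_1}x_1^{\mu_1}e^{-\mu_1}/\mu_1!\big)\big(\sum_{\mu_2}\mu_2^{\mu_2}x_2^{\mu_2}e^{-\mu_2}/\mu_2!\big)=(t_1-1)(t_2-1)$ by \eqref{eq:xyt}. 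So $F_{0,2}^H$ is characterized as the solution of the linear first-order PDE
$$
\big(x_1\partial_{x_1}+x_2\partial_{x_2}\big)F_{0,2}^H = (t_1-1)(t_2-1)
$$
with the appropriate vanishing/regularity at $x_1,x_2\to 0$. I would then verify that the proposed closed form $\log\big(\frac{z_1-z_2}{x_1-x_2}\big)-(z_1+z_2)+1$ satisfies this PDE: compute $x\partial_x$ of $\log(z_1-z_2)$, of $\log(x_1-x_2)$, and of $z_1+z_2$, using $x\,\frac{dz}{dx}=\frac{z}{1-z}$ (from differentiating $x=ze^{1-z}$) and $x_i\partial_{x_i}x_j=\delta_{ij}x_i$. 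A short computation should collapse the right-hand side to $(t_1-1)(t_2-1)$ after substituting $z_i=1-1/t_i$. Finally I check the initial condition: as $x_1,x_2\to 0$ one has $z_i\sim x_i$, so $\frac{z_1-z_2}{x_1-x_2}\to 1$ and $z_1+z_2\to 0$, giving $F_{0,2}^H\to \log 1 - 0 + 1 - \cdots$; one must be slightly careful that the ``$+1$'' is matched by the next-order term of $\log\frac{z_1-z_2}{x_1-x_2}$ (since $z_i = x_i - x_i^2 + \cdots$ forces $\frac{z_1-z_2}{x_1-x_2}=1-(x_1+x_2)+\cdots$), so that the genuine constant is fixed by comparing, say, the coefficient of a single monomial $x_1x_2$ on both sides against the known value $H_0((1,1))=\tfrac12$.

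An alternative, perhaps cleaner, route for \eqref{eq:F02H} is to differentiate once in each variable and match $W_{0,2}^H=d_1d_2F_{0,2}^H$ against the standard Bergman-kernel form $\frac{dt_1\,dt_2}{(t_1-t_2)^2}-\pi^*\frac{dx_1\,dx_2}{(x_1-x_2)^2}$ demanded by Definition~\ref{def:EO}(2); the antiderivative of $\frac{dz_1\,dz_2}{(z_1-z_2)^2}$ is $\log(z_1-z_2)$ and similarly for $x$, while the extra linear term $-(z_1+z_2)+1$ is the ``constant of integration'' in each variable, which does not affect $W_{0,2}^H$ but is pinned down by the $\mu_i\to 0^+$ asymptotics of $H_0((\mu_1,\mu_2))$ (analogous to the $-w$ correction term \eqref{eq:F01D} in the dessins case). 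I would present whichever version is shorter.

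The main obstacle I anticipate is not any single computation but the bookkeeping of constants of integration: both $F_{0,1}^H$ and $F_{0,2}^H$ are only determined up to additive constants (and, in the two-variable case, up to functions of one variable) by the differential equations that make the computation tractable, and matching these against the genuine values of the unstable Hurwitz numbers — including the somewhat delicate ``$+1$'' and the shift by $z_1+z_2$ — requires care with the low-order terms of the Lagrange-inversion expansions $z=z(x)$ and $t=t(x)$. The verification that the claimed closed forms satisfy the relevant ODE/PDE is routine once $x\,dz/dx = z/(1-z)$ and $z=1-1/t$ are in hand.
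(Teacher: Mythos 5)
Your proposal is correct and follows essentially the same route as the paper: the $(0,1)$ case is the same one\-/variable integration (the paper phrases it as solving $t^2(t-1)\frac{d}{dt}\hxi_{-2}=\hxi_{-1}=z$ in the $t$-coordinate and fixing the constant by $t=1\Leftrightarrow x=0$), and the $(0,2)$ case rests on the identical Euler-operator computation $(x_1\partial_{x_1}+x_2\partial_{x_2})F_{0,2}^H=(t_1-1)(t_2-1)$ together with the fact that holomorphic solutions of the Euler equation differ only by a constant. One correction to your constant bookkeeping: since $z=e^{-1}x+O(x^2)$ one has $\log\frac{z_1-z_2}{x_1-x_2}\to -1$ as $x_1,x_2\to 0$ (not $0$), which is exactly what cancels the $+1$; and because the residual ambiguity is an additive constant, it is the degree-zero coefficient — or, more cleanly, the restriction to $x_2=0$, where both sides vanish identically since $\log(z_1/x_1)=z_1-1$ — that pins it down, not the coefficient of $x_1x_2$.
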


\begin{proof}
The $(0,1)$ case is a straightforward computation.
$$
F_{0,1}^H(t) 
= 
\sum_{k=d} ^\infty
H_0((d))\;e^{-d}x^d
=
\sum_{d=1} ^\infty \frac{d^{d-2}}{d!}\;e^{-d}x^d
=\hxi_{-2}(t).
$$
This is a solution to the differential equation
$$
t^2(t-1)\frac{d}{dt}\;\hxi_{-2}(t) = 
\hxi_{-1}(t) = z=\frac{t-1}{t}.
$$
Therefore, $\hxi_{-2}(t) = c-\half\;\frac{1}{t^2}$
for a constant of integration $c$.
Here again 
we note 
$$
t=1\Longrightarrow z=0\Longrightarrow
x=0\Longrightarrow \hxi_{-2}(t)=0.
$$
This determines that $c=\half$.
Thus we have established (\ref{eq:F01H}).

Since
$$
F_{0,2}^H(t_1,t_2) =
\sum_{\mu_1,\mu_2\ge 1}
\frac{1}{\mu_1+\mu_2}\cdot\frac{\mu_1 ^{\mu_1}}
{\mu_1 !}\;e^{-\mu_1}\cdot\frac{\mu_2 ^{\mu_2}}{\mu_2 !}\;e^{-\mu_2}\cdot
x_1^{\mu_1}\;x_2 ^{\mu_2}
$$
and since $z=\hxi_{-1}(t)$,
(\ref{eq:F02H}) is equivalent 
to 
\begin{equation}
\label{eq:F02-inx}
\sum_{\substack{\mu_1,\mu_2\ge 0
\\(\mu_1,\mu_2)\ne (0,0)}}
\frac{1}{\mu_1+\mu_2}\cdot\frac{\mu_1 ^{\mu_1}}
{\mu_1 !}\;e^{-\mu_1}
\cdot\frac{\mu_2 ^{\mu_2}}{\mu_2 !}
\;e^{-\mu_2}\cdot
x_1 ^{\mu_1}x_2 ^{\mu_2}
=\log\left( e
\sum_{k=1} ^\infty \frac{k^{k-1}}{k!}\;e^{-k}\cdot
\frac{x_1 ^k - x_2 ^k}{x_1-x_2}
\right),
\end{equation}
where $|x_1|< 1, |x_2|<1$, and $0<|x_1-x_2|<1$
so that the formula is an equation of holomorphic functions 
in $x_1$ and $x_2$.
Define
\begin{multline*}
\phi(x_1,x_2)
\\
\overset{{\rm{def}}}{=}\sum_{\substack{\mu_1,\mu_2\ge 0
\\(\mu_1,\mu_2)\ne (0,0)}}
\frac{1}{\mu_1+\mu_2}\cdot\frac{\mu_1 ^{\mu_1}}
{\mu_1 !}\;e^{-\mu_1}
\cdot\frac{\mu_2 ^{\mu_2}}{\mu_2 !}
\;e^{-\mu_2}\cdot
x_1 ^{\mu_1}x_2 ^{\mu_2}
-\log\left(
\sum_{k=1} ^\infty \frac{k^{k-1}}{k!}\;e^{1-k}\cdot
\frac{x_1 ^k - x_2 ^k}{x_1-x_2}
\right).
\end{multline*}
Then
\begin{multline*}
\phi(x,0)=\sum_{\mu_1\ge 1}
\frac{\mu_1 ^{\mu_1 -1}}{\mu_1 !}
\;e^{-\mu_1}
x ^{\mu_1}
-\log\left(
\sum_{k=1} ^\infty \frac{k^{k-1}}{k!}
\;e^{-k}\cdot
x ^{k-1}
\right)-1
\\
=
\hxi_{-1}(t) - \log\left(\frac{\hxi_{-1}(t)}{x}\right)-1
=
1-\frac{1}{t} -\log\left(1-\frac{1}{t}\right) +\log x-1
\\
=
-\frac{1}{t} -\log\left(1-\frac{1}{t}\right) - w = 0
\end{multline*}
because 
$$
x=e^{-w}=ze^{1-z}=\left(1-\frac{1}{t}
\right)e^{\frac{1}{t}}. 
$$
Here $t$ is restricted on the 
domain $Re(t)>1$. 
Since
\begin{multline*}
x_1 \frac{\partial}{\partial x_1}
\log\left(e
\sum_{k=1} ^\infty \frac{k^{k-1}}{k!}
\; e^{-k}\cdot
\frac{x_1 ^k - x_2 ^k}{x_1-x_2}
\right)
\\
=
t_1 ^2 (t_1-1)\frac{\partial}{\partial t_1}
\log\left(\hxi_{-1}(t_1)-\hxi_{-1}(t_2)\right)
-x_1 \frac{\partial}{\partial x_1}\log(x_1-x_2)
\\
=
t_1 ^2 (t_1-1)\frac{\partial}{\partial t_1}
\log\left(-\frac{1}{t_1}+\frac{1}{t_2}\right)
- \frac{x_1}{x_1-x_2}
\\
=
\frac{t_1t_2(t_1-1)}{t_1-t_2}-\frac{x_1}{x_1-x_2},
\end{multline*}
we have 
\begin{multline*}
\left(x_1 \frac{\partial}{\partial x_1}+
x_2 \frac{\partial}{\partial x_2}
\right)
\log\left(e
\sum_{k=1} ^\infty \frac{k^{k-1}}{k!}
\;e^{-k}\cdot
\frac{x_1 ^k - x_2 ^k}{x_1-x_2}
\right)
\\
=
\frac{t_1t_2(t_1-1)-t_1t_2(t_2-1)}{t_1-t_2}
-\frac{x_1-x_2}{x_1-x_2}
\\
=
t_1t_2 - 1 = \hxi_0(t_1)\hxi_0(t_2)+\hxi_0(t_1)+\hxi_0(t_2).
\end{multline*}
On the other hand, we also have
\begin{multline*}
\left(x_1 \frac{\partial}{\partial x_1}+
x_2 \frac{\partial}{\partial x_2}
\right)
\sum_{\substack{\mu_1,\mu_2\ge 0\\
(\mu_1,\mu_2)\ne (0,0)}}
\frac{1}{\mu_1+\mu_2}\cdot\frac{\mu_1 ^{\mu_1}}{\mu_1 !}\;e^{-\mu_1}
\cdot\frac{\mu_2 ^{\mu_2}}{\mu_2 !}
\;e^{-\mu_2}\cdot
x_1 ^{\mu_1}x_2 ^{\mu_2}
\\
=
\sum_{\substack{\mu_1,\mu_2\ge 0\\
(\mu_1,\mu_2)\ne (0,0)}}
\frac{\mu_1+\mu_2}{\mu_1+\mu_2}\cdot\frac{\mu_1 ^{\mu_1}}{\mu_1 !}\;e^{-\mu_1}
\cdot\frac{\mu_2 ^{\mu_2}}{\mu_2 !}
\;e^{-\mu_2}\cdot
x_1 ^{\mu_1}x_2 ^{\mu_2}
\\
=
\hxi_0(t_1)\hxi_0(t_2)+\hxi_0(t_1)+\hxi_0(t_2).
\end{multline*}
Therefore, 
\begin{equation}
\label{eq:Eulerequation}
\left(x_1 \frac{\partial}{\partial x_1}+
x_2 \frac{\partial}{\partial x_2}
\right)\phi(x_1,x_2) = 0.
\end{equation}
Note that $\phi(x_1,x_2)$ is a holomorphic function 
in $x_1$ and $x_2$. Therefore, it has a series
expansion in homogeneous polynomials around $(0,0)$. 
Since a homogeneous polynomial in $x_1$ and $x_2$ of
degree $n$ is an eigenvector of the differential operator 
$x_1 \frac{\partial}{\partial x_1}+
x_2 \frac{\partial}{\partial x_2}$ belonging to the 
eigenvalue $n$, the only holomorphic
solution to the Euler differential
equation (\ref{eq:Eulerequation}) is a constant. But 
since $\phi(x_1,0)=0$, we conclude that 
$\phi(x_1,x_2)=0$. This completes the proof of 
(\ref{eq:F02-inx}), and hence the proposition.
\end{proof}

\begin{Def}
\label{def:WgnH}
We define the symmetric differential forms
for all $g\ge 0$ and $n>0$ by
\begin{equation}
\label{eq:WgnH}
W_{g,n}^H(t_1,\dots,t_n) = d_1\cdots d_n
F_{g,n}^H(t_1,\dots,t_n),
\end{equation}
and call them the Hurwitz differential forms. 
\end{Def}

The unstable cases are given by
\begin{equation}
\label{eq:W01H}
W_{0,1}^H(t_1) =
d_1 F_{0,1}^H(t_1) = \frac{1}{t_1^3}\;dt_1
=  \frac{z}{x}\;dx,
\end{equation}
and 
\begin{multline}
\label{eq:W02H}
W_{0,2}^H(t_1, t_2) =
d_1 d_2F_{0,2}^H(t_1, t_2) = 
d_1 d_2 \left[
\log\left( z_1-z_2\right)
-\log(x_1-x_2)
\right]
\\
=
d_1 d_2 \left[
\log\left( \frac{1}{t_2}-\frac{1}{t_1}\right)
-\log(x_1-x_2)
\right]
=
d_1 d_2 \left[
\log\left( t_1-t_2\right)
-\log(x_1-x_2)
\right]
\\
=\frac{dt_1\cdot dt_2}{(t_1-t_2)^2}
-\frac{dx_1\cdot dx_2}{(x_1-x_2)^2}.
\end{multline}
We note that all quantities are expressible in terms
of $z$, or equivalently, in $t$. 
Now Definition~\ref{def:EO}
tells us that the spectral curve 
$\Sigma$ of the single Hurwitz
number is
\begin{equation}
\label{eq:Hspectral}
\begin{cases}
x=z e^{1-z}\\
y=\frac{z}{x} = e^{z-1}.
\end{cases}
\end{equation}
The \emph{Lambert curve} 
$\Sigma$ defined by 
$x=z e^{1-z}$, which is 
obtained by the 
Laplace transform of the
number of trees,  is an analytic curve and its 
$x$-projection has a simple
ramification point at $z=1$,
since
$$
dx = (1-z)e^{1-z}\;dz.
$$
The $t$-coordinate brings this ramification point
to $t=\infty$. Let $\bar{z}$ (resp.\ $\bar{t}$)
denote the unique 
local Galois conjugate of $z$ (reps.\ $t$).
We also use 
\begin{equation}
\label{eq:s(t)}
\bar{t} = s(t),
\end{equation}
which is defined by the functional equation
\begin{equation}
\label{eq:s(t)equation}
\left(1-\frac{1}{t}\right) e^{\frac{1}{t}}
= 
\left(1-\frac{1}{s(t)}\right) e^{\frac{1}{s(t)}}.
\end{equation}
Although the Galois conjugate is only locally defined
near the branched point $t=\infty$, we consider
$s(t)$ as a global holomorphic function via
analytic continuation. 
For $Re(t)>1$, (\ref{eq:s(t)equation}) implies
$$
w(t)=-\log x = -\left(\frac{1}{t}-\sum_{n=1} ^\infty 
\frac{1}{n}\;
\frac{1}{t^n}\right)
 =\sum_{n=2}^\infty \frac{1}{t^n}.
$$
When considered as a functional equation, 
(\ref{eq:s(t)equation}) has exactly two solutions:
$t$ and
\begin{equation}
\label{eq:s(t)expansion}
s(t) =  -t + \frac{2}{3} + 
\frac{4}{135}t^{-2}+\frac{8}{405} t^{-3}+\frac{8}{567}t^{-4}
+\cdots .
\end{equation}
This is the deck-transformation of 
 the projection $\pi:\Sigma\rightarrow \bC$ near $t=\infty$ 
 and satisfies the involution equation
 $s\big(s(t)\big)= t$.
  It is analytic on
 $\bC\setminus [0,1]$ and has logarithmic singularities
 at $0$ and $1$.

Let us calculate the recursion kernel.
Since
$$
\frac{dx}{x} = \frac{1-z}{z}\;dz
=\frac{dt}{t^2(t-1)}
=\frac{s'(t)dt}{s(t)^2(s(t)-1)} ,
$$
we have
\begin{multline}
\label{eq:Hkernel}
K^H(t,t_1) = 
\half\;\frac{\int_t ^{s(t)} W_{0,2}^H(\cdot, t_1)}
{W_{0,1}(s(t))-W_{0,1}(t)}
=
\half 
\left(
\frac{1}{t-t_1} -\frac{1}{s(t)-t_1}
\right)
\frac{t^2(t-1)}{\frac{1}{t}-\frac{1}{s(t)}}
\cdot \frac{1}{dt}\cdot dt_1
\\
=
\half 
\left(
\frac{1}{t-t_1} -\frac{1}{s(t)-t_1}
\right)
\frac{t s(t)}{s(t)-t}
\cdot \frac{t^2(t-1)}{dt}\cdot dt_1.
\end{multline}

\begin{thm}[\cite{EMS, MZ}]
\label{thm:BM conjecture}
The Hurwitz differential forms (\ref{eq:WgnH})
for $2g-2+n>0$ satisfy the
Eynard-Orantin recursion:
\begin{multline}
\label{eq:HEO}
W_{g,n}^H(t_1,\dots,t_n)
=\frac{1}{2\pi i}\oint_{\gam_\infty}
K^H(t,t_1)
\Bigg[
W_{g-1,n+1}^H(t,s(t),t_2, \dots, t_n)
\\
+
\sum_{\substack{g_1+g_2=g\\
I\sqcup J=\{2,\dots,n\}}} ^{\text{No $(0,1)$-terms}}
W_{g_1,|I|+1}^H(t,t_I)W_{g_2,|J|+1}^H(s(t),t_J)
\Bigg],
\end{multline}
where $\gam_\infty$ is a negatively oriented 
circle around $\infty$ whose radius
is larger than any of $|t_j|$'s and $|s(t_j)|$'s.
\end{thm}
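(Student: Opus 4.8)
The plan is to follow the strategy used for the other examples in this paper: establish a combinatorial recursion for the Hurwitz numbers $h_{g,\mu}$, take its Laplace transform, and identify the outcome with the Eynard-Orantin recursion for the Lambert spectral curve (\ref{eq:Hspectral}). The combinatorial input is the \emph{cut-and-join equation} of Goulden-Jackson and Vakil \cite{GJ,V}. After dividing by the automorphism factors, passing to the normalization $H_g(\mu)$ of (\ref{eq:Hg(mu)}), and singling out one part $\mu_1$ (as in the proof of Theorem~\ref{thm:Dgn}), it takes the same shape as the dessin recursion (\ref{eq:Dgn recursion}): the left-hand side is a multiple of $H_g(\mu)$ carrying the factor $\mu_1$, while the right-hand side is the sum of a \emph{join} contribution, in which $\mu_1$ is merged with some $\mu_j$, and a \emph{cut} contribution, in which $\mu_1$ is split as $\alpha+\beta$ with weight $\alpha\beta$, the domain curve either staying connected with genus $g-1$ or degenerating with $g_1+g_2=g$ and a set partition $I\sqcup J=\{2,\dots,n\}$. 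As in the dessin case this is merely an equation, not an effective recursion, since the $(g_1,|I|+1)=(0,1)$ piece of the cut contribution reproduces $H_g$ on the right.

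First I would apply the Laplace transform $\sum_{\mu\in\bZ_+^n}(\cdot)\,e^{-\sum_{i=1}^n\mu_i(w_i+1)}$ to both sides. By Proposition~\ref{prop:twxy} the factor $\mu_1$ on the left becomes the operator $-\partial/\partial w_1 = t_1^2(t_1-1)\,\partial/\partial t_1$ acting on $F_{g,n}^H$; in the cut contribution each of the weights $\alpha,\beta$ likewise becomes this operator acting on the respective factor, which raises the index of the polynomials $\hxi_n(t)$ by one via (\ref{eq:xihatn}). After the substitutions of Proposition~\ref{prop:twxy}, the join contribution becomes the family of terms in which $W_{0,2}^H$ acts as a Cauchy kernel on the lower forms $W_{g,n-1}^H$ --- these are precisely the $(g_1,|I|+1)=(0,2)$ summands in (\ref{eq:HEO}) --- while the cut contribution becomes a \emph{diagonal} expression at the single variable $t_1$, built from $W_{g-1,n+1}^H(t_1,t_1,t_2,\dots,t_n)$ and the stable products $W_{g_1,|I|+1}^H(t_1,t_I)\,W_{g_2,|J|+1}^H(t_1,t_J)$.

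The key step is to recognize this Laplace-transformed equation as a residue identity, i.e.\ to rewrite its right-hand side as $\frac{1}{2\pi i}\oint_{\gam_\infty}K^H(t,t_1)[\,\cdots\,]$ with the kernel $K^H$ of (\ref{eq:Hkernel}). Here the $(0,1)$ piece of the cut contribution is essential: together with the factor $\mu_1$ on the left it recombines --- after Laplace transform, using $W_{0,1}^H$ of (\ref{eq:W01H}) --- into the denominator $W_{0,1}^H(s(t))-W_{0,1}^H(t)$ of the kernel, which explains the restriction ``No $(0,1)$-terms'' in (\ref{eq:HEO}). The diagonal terms are recovered because $W_{0,2}^H$ of (\ref{eq:W02H}) acts as the Cauchy differentiation kernel, so that $\frac{1}{2\pi i}\oint_{\gam_\infty}K^H(t,t_1)\,W_{g-1,n+1}^H(t,s(t),\dots)$ picks up, at the pole $t=t_1$, exactly $W_{g-1,n+1}^H(t_1,s(t_1),\dots)$ collapsing onto the diagonal, once one uses the functional equation (\ref{eq:s(t)equation}) for $s(t)$, its expansion (\ref{eq:s(t)expansion}) at $t=\infty$, and the chain rule $dx/x = dt/(t^2(t-1)) = s'(t)\,dt/(s(t)^2(s(t)-1))$; the remaining poles of the integrand contribute nothing because the $W_{g,n}^H$ have polynomial coefficients in the $t_i$ in the stable range by the ELSV expansion (\ref{eq:FgnH}). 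The base cases $W_{0,3}^H$ and $W_{1,1}^H$ are checked directly from (\ref{eq:FgnH}), (\ref{eq:01Hodge}) and (\ref{eq:02Hodge}).

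The main obstacle is precisely this residue identity. One must control the deck transformation $s(t)$ globally --- it is analytic on $\bC\setminus[0,1]$ with logarithmic singularities at $0$ and $1$ --- to justify deforming $\gam_\infty$ and to check that only the poles at $t=t_1$, $t=s(t_1)$, and $t=\pm t_j$ contribute, and one must carry out the delicate bookkeeping matching the convolution weights $\alpha\beta$ of the cut contribution against the explicit factor $\tfrac{t\,s(t)}{s(t)-t}\,t^2(t-1)$ in $K^H$ together with the $(0,1)$-recombination described above. This is the computation carried out in \cite{EMS}, where everything is reduced to the linear Hodge integrals $\la\tau_{k_1}\cdots\tau_{k_n}\Lambda_g^\vee(1)\ra$ and the operators $\hxi_n$, and independently in \cite{MZ} by a more direct argument; either route establishes the theorem.
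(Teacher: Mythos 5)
Your proposal takes exactly the route the paper itself takes: the paper does not reprove this theorem here but cites \cite{EMS, MZ} and explains in the remark immediately following the statement that those proofs consist of taking the Laplace transform of the cut-and-join equation (\ref{eq:caj}) and identifying the result with the Eynard--Orantin recursion on the Lambert curve; your description of the mechanism (the unstable $(0,1)$ terms assembling the kernel denominator $W_{0,1}^H(s(t))-W_{0,1}^H(t)$, $W_{0,2}^H$ acting as the Cauchy differentiation kernel, polynomiality of the stable $F_{g,n}^H$ via the ELSV expansion guaranteeing that only the intended poles contribute) matches what the paper says and what the cited references carry out. One correction to your setup: the cut-and-join equation (\ref{eq:caj}) is \emph{not} of the $\mu_1$-singled-out form of the dessin recursion (\ref{eq:Dgn recursion}) --- its left-hand side carries the factor $r(g,\mu)=2g-2+n+|\mu|$ of (\ref{eq:r}) and its right-hand side sums symmetrically over all parts and all pairs of parts --- so its Laplace transform produces the Euler operator $\sum_i x_i\,\partial/\partial x_i$ plus a constant rather than a single $t_1$-derivative, and the reorganization of that symmetric identity into a recursion that singles out $t_1$ is a nontrivial part of the work done in \cite{EMS, MZ}, not something you get for free by mimicking the proof of Theorem~\ref{thm:Dgn}.
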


\begin{rem}
The recursion formula (\ref{eq:HEO}) was first
conjectured by Bouchard and Mari\~no in 
\cite{BM}. Its proofs appear in 
\cite{BEMS, EMS, MZ}. The method of \cite{BEMS}
is to use a matrix integral expression of the single
Hurwitz numbers. The idea of \cite{EMS,MZ}
is that the Laplace transform of the 
\emph{cut-and-join
equation} of \cite{GJ,V} is the Eynard-Orantin
recursion. The cut-and-join equation takes the 
following form:
\begin{multline}
\label{eq:caj}
r(g,\mu){H}_g  (\mu)
=
\sum_{i< j}
(\mu_i+\mu_j)
{H}_g \big(\mu(\hat{i},\hat{j}),\mu_i+\mu_j\big)
 \\
 + \frac{1}{2} \sum_{i=1} ^n
 \sum_{\alpha+\beta = \mu_i}\alpha\beta 
\left({H}_{g-1} \big(\mu(\hat{i}),\a,\b\big)
+
\sum_{\substack{g_1+g_2 = g\\
\nu_1\sqcup \nu_2 = \mu(\hat{i})}}
{H}_{g_1} (\nu_1,\a)
{H}_{g_2} (\nu_2,\b)
\right).
\end{multline}
Here $\mu$ is a partition of length $n$, and
 $\mu(\hat{i})$ and $\mu(\hat{i},\hat{j})$
 indicate the partition obtained by deleting 
 parts of $\mu$. 
\end{rem}

\begin{rem}
As we have seen above,
Hurwitz numbers in (\ref{eq:unstableHurwitz})
determine the shape of the recursion formula
(\ref{eq:HEO}). Since the recursion 
gives the Hurwitz numbers for all $(g,n)$, 
we have thus established that unstable
$(g,n) = (0,1)$ and $(0,2)$ Hurwitz numbers
determine all other single Hurwitz numbers.
\end{rem}

It is important to check if the formulas
(\ref{eq:W11}) and
(\ref{eq:W03})  agree
with the geometry. 
From the definition (\ref{eq:FgnH})
we calculate
$$
F_{0,3}^H(t_1,t_2,t_3)
=\la \tau_0\tau_0\tau_0\ra_{0,3} \hxi_0(t_1)
\hxi_0(t_2)\hxi_0(t_3)
=(t_1-1)(t_2-1)(t_3-1),
$$
which yields 
\begin{equation}
\label{eq:W03H}
W_{0,3}^H(t_1,t_2,t_3) 
= dt_1dt_2dt_3.
\end{equation}
Since 
$$
dx(z)\cdot dy(z) = (1-z)dz\cdot dz =\frac{dt \cdot dt}{t^5} 
$$
from (\ref{eq:Hspectral})
and (\ref{eq:Lambert}), 
the general formula
 (\ref{eq:W03}) yields
\begin{multline*}
W_{0,3}^H(t_1,t_2,t_3)
=
-\frac{1}{2\pi i} 
\oint_{\gam_\infty}
\frac{W_{0,2}^H(t,t_1)W_{0,2}^H(t,t_2)
W_{0,2}^H(t,t_3)}
{dx(t)\cdot dy(t)}
\\
=
-\left[
\frac{1}{2\pi i} 
\oint_{\gam_\infty}
\frac{t^5}{(t-t_1)^2(t-t_2)^2(t-t_3)^2}dt
\right]
dt_1dt_2dt_3
=dt_1dt_2dt_3,
\end{multline*}
in agreement with geometry. Here we calculate
the residue at $t=\infty$. Although
$$
W_{0,2}^H(t,t_i)=\frac{dt\cdot dt_i}{(t-t_i)^2}
-\frac{dx\cdot dx_i}{(x-x_i)^2},
$$
the second term does not contribute to the integral.
This is because as $t\rar\infty$, we have $x\rar 1$, and 
$dx\cdot dx_i/{(x-x_i)^2}$ has no pole at $x=1$.

Similarly, 
$$
F_{1,1}^H(t_1)=\la\tau_1\ra_{1,1} \hxi_1(t_1)
-\la \tau_0\lam_1\ra_{1,1}\hxi_0(t_1)
=\frac{1}{24}(t_1^2-1)(t_1-1),
$$
and thus we have
\begin{equation}
\label{eq:W11H}
W_{1,1}^H(t_1) = \frac{1}{24}(t_1-1)(3t_1+1)dt_1.
\end{equation}
On the other hand, the general formula (\ref{eq:W11})
gives
\begin{multline*}
W_{1,1}^H(t_1) = 
\frac{1}{2\pi i} \oint_{\gam_\infty}
K^H(t,t_1)
\left.
\left[
W_{0,2}^H(u,v)+\frac{dx(u)\cdot dx(v)}
{(x(u)-x(v))^2}
\right]
\right|_{\substack{u=t\\v=s(t)}}
\\
=
\frac{1}{2\pi i} \oint_{\gam_\infty}
K^H(t,t_1)
\frac{dt\cdot s'(t)dt}{(t-s(t))^2}
\\
=
\left[
\frac{1}{2\pi i} \oint_{\gam_\infty}
\half 
\left(
\frac{1}{t-t_1} -\frac{1}{s(t)-t_1}
\right)
\frac{t s(t)}{s(t)-t}
t^2(t-1)
\frac{s'(t)dt}{(t-s(t))^2}
\right]\;dt_1
\\
=
\frac{t_1 s(t_1)}{(t_1-s(t_1))^3}s(t_1)^2 (s(t_1)-1)\;dt_1
\\
-
\left[
\frac{1}{2\pi i} \oint_{\gam_{[0,1]}}
\half 
\left(
\frac{1}{t-t_1} -\frac{1}{s(t)-t_1}
\right)
\frac{t s(t)}{s(t)-t}
t^2(t-1)
\frac{s'(t)dt}{(t-s(t))^2}
\right]\;dt_1,
\end{multline*}
where $\gam_{[0,1]}$ is a contour circling
around the slit $[0,1]$ in the $t$-plane in the 
positive direction.

\begin{figure}[htb]
\centerline{\epsfig{file=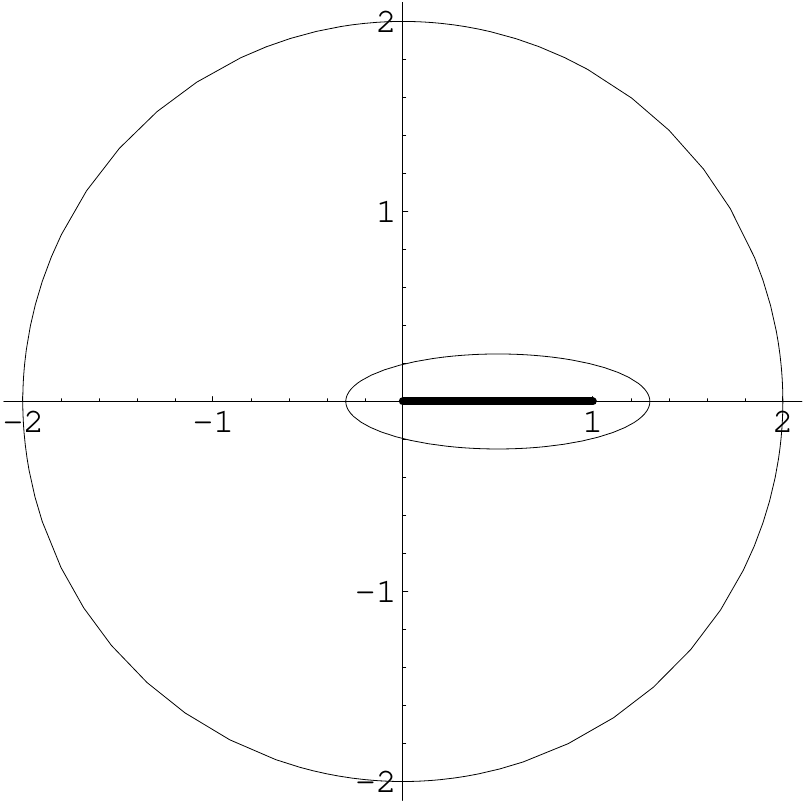, width=2in}}
\caption{The contours
 of integration. The outer loop $\gamma_\infty$ is the
 circle of a large radius oriented
 clock wise, and $\gamma_{[0,1]}$ is the thin
loop surrounding the closed interval $[0,1]$
in the positive direction. }
\label{fig:contourH}
\end{figure}

Note that the integrand of the last integral is a holomorphic
function in $t$ on $\gam_{[0,1]}$, hence it has
a finite value. It is also clear that as $t_1\rar\infty$, 
this integral tends to $0$, because $\gam_{[0,1]}$
is a compact space.
Therefore, we conclude that
\begin{multline*}
W_{1,1}^H(t_1)
= \frac{t_1 s(t_1)}{(t_1-s(t_1))^3}s(t_1)^2 (s(t_1)-1)\;dt_1
+O(1/t_1)
\\
=
\left(
\frac{1}{8}\;t_1^2 -\frac{1}{12}\; t_1-\frac{1}{24}
\right)
\;dt_1
+O(1/t_1),
\end{multline*}
since $s(t) = -t +2/3 +O(1/t^2)$.
It agrees with (\ref{eq:W11H}) because of
the following

\begin{lem}
\label{lem:Hpolynomial}
A solution to the topological recursion 
(\ref{eq:HEO}) is a polynomial in $t_1$.
\end{lem}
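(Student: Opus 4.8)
The plan is to argue by induction on $2g-2+n$, treating (\ref{eq:HEO}) --- and, in the single case $(g,n)=(1,1)$, its modified version (\ref{eq:W11}) --- as the definition of $W_{g,n}^H$. The base case $(g,n)=(0,3)$ is $W_{0,3}^H=dt_1dt_2dt_3$ by (\ref{eq:W03H}), a polynomial. For the inductive step fix $(g,n)$ with $2g-2+n>0$ and assume every $W_{g',n'}^H$ with $0<2g'-2+n'<2g-2+n$ is a polynomial differential form, the unstable $W_{0,1}^H$ and $W_{0,2}^H$ being known explicitly from (\ref{eq:W01H})--(\ref{eq:W02H}). After substituting $d(s(t))=s'(t)\,dt$, the bracket in (\ref{eq:HEO}) can be written as $\mathcal{B}(t)=\beta(t)\,(dt)^2\,dt_2\cdots dt_n$, where $\beta(t)$ is assembled from the lower $W$'s and the function $s'(t)$ and, crucially, does \emph{not} involve $t_1$; the kernel (\ref{eq:Hkernel}) is $K^H(t,t_1)=\kappa(t,t_1)\,dt_1/dt$. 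Hence
$$W_{g,n}^H(t_1,\dots,t_n)=\Omega(t_1)\,dt_1\cdots dt_n,\qquad \Omega(t_1)=\frac{1}{2\pi i}\oint_{\gam_\infty}\kappa(t,t_1)\,\beta(t)\,dt,$$
and it suffices to prove that $\Omega$ is a polynomial in $t_1$.

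First I would show that $\Omega$ extends to an entire function of $t_1$. For fixed $t_1$, the integrand $\kappa(t,t_1)\beta(t)$ is, as a function of $t$, holomorphic on $\bC\setminus[0,1]$ away from a finite set of points: the poles $t=t_1$ and $t=s(t_1)$ coming from the two summands of $\kappa$, and the poles $t=t_j$ and $t=s(t_j)$, $j\ge 2$, produced by any $W_{0,2}^H$-factor occurring in $\beta$ (these being the only potential summands of the bracket that are not already polynomial). All of these lie in a fixed bounded region, so by Cauchy's theorem the value of $\frac{1}{2\pi i}\oint_{|t|=R}\kappa(t,t_1)\beta(t)\,dt$ is independent of $R$ once $R>\max\{1,|t_1|,|s(t_1)|,|t_2|,\dots,|t_n|,|s(t_2)|,\dots,|s(t_n)|\}$. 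Moreover, for $t$ on $|t|=R$ the only $t_1$-dependence of the integrand is through $\tfrac{1}{t-t_1}-\tfrac{1}{s(t)-t_1}$, which is holomorphic in $t_1$ for $|t_1|<R-1$ since $|t|=R$ and $|s(t)|\ge R-1$ on that circle; hence $\Omega$ is holomorphic on $|t_1|<R-1$. Letting $R\to\infty$, $\Omega$ is entire.

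Next I would bound $\Omega$ at infinity, using that $s$ is single-valued and analytic on $\bC\setminus[0,1]$ with $s(t)=-t+\tfrac23+O(1/t)$ and $s'(t)=-1+O(1/t^2)$ near $\infty$ (the content of (\ref{eq:s(t)expansion}) and the surrounding discussion). Choosing $R=c|t_1|$ with a fixed $c>1$, on $|t|=R$ one has $|t-t_1|\ge(c-1)|t_1|$, $|s(t)-t_1|\ge(c-1)|t_1|-1$, $|t\,s(t)/(s(t)-t)|=O(|t_1|)$ and $|t^2(t-1)|=O(|t_1|^3)$, so $|\kappa(t,t_1)|=O(|t_1|^{3})$; and $\beta(t)=O(|t_1|^{D})$ on $|t|=R$ for some $D$ provided by the induction hypothesis, because the polynomial factors in $\beta$ are evaluated at arguments of size $O(R)$, the factors $s'(t)$ are $O(1)$, and the coefficient functions of any $W_{0,2}^H$-factor are $O(1/R^2)$ there. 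Multiplying by the contour length $2\pi R$ gives $|\Omega(t_1)|=O(|t_1|^{D+4})$. An entire function of polynomial growth is a polynomial, so $\Omega(t_1)$, hence $W_{g,n}^H(t_1,\dots,t_n)$, is a polynomial in $t_1$; this closes the induction. The case $(g,n)=(1,1)$ is identical once one notes that in (\ref{eq:W11}) the bracketed correction collapses the second factor to $s'(t)(dt)^2/(t-s(t))^2$, whose coefficient is regular off $[0,1]$ and $O(1/R^2)$ on $|t|=R$, so both steps apply verbatim.

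The main obstacle I foresee is the analytic control of the deck transformation $s(t)$: the radius-independence of the contour integral and the growth estimate both rest on $s$ being single-valued and holomorphic on $\bC\setminus[0,1]$ and on the uniformity of its expansion at $\infty$. A secondary point is the bookkeeping of the $W_{0,2}^H$-factors entering the bracket when $n\ge 2$; they contribute genuine finite poles (at $t=t_j$ and $t=s(t_j)$) and so are not polynomial in $t$, but because these poles are finite and their coefficient functions decay on large circles they affect neither the entirety nor the polynomial growth of $\Omega$.
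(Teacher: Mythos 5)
Your proof is correct, but it takes a genuinely different route from the paper's. The paper's argument is a one-line formal computation at the ramification point $t=\infty$: since the only $t_1$-dependence of the integrand sits in the kernel factor $\frac{1}{t-t_1}-\frac{1}{s(t)-t_1}$, and since the Laurent expansion of that factor at $t=\infty$ has coefficients that are polynomials in $t_1$ (beginning $\frac{1}{t}+\frac{1}{3t^2}+\big(t_1^2-\frac{2}{3}t_1+\frac{2}{9}\big)\frac{1}{t^3}+\cdots$), the residue at $\infty$ --- a finite pairing of these coefficients against the Laurent coefficients of the remaining, $t_1$-independent, factors --- is a polynomial in $t_1$. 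You instead show that $\Omega(t_1)$ is an entire function of polynomial growth and conclude by a Liouville-type argument. Both proofs rest on the same two facts: the $t_1$-dependence is confined to that one kernel factor, and the rest of the integrand is controlled at $t=\infty$ by induction on $2g-2+n$ (an induction the paper leaves implicit but that you make explicit). The paper's version is shorter and essentially algebraic once the expansion (\ref{eq:s(t)expansion}) is known; yours avoids computing any expansion coefficients but pays for it with real analytic bookkeeping --- single-valuedness of $s$ on $\bC\setminus[0,1]$, locating the finite poles contributed by the $W_{0,2}^H$-factors, and the $R=c|t_1|$ growth estimate --- all of which you handle correctly. One small point to make explicit: your induction hypothesis is polynomiality of the lower $W_{g',n'}^H$ in \emph{all} variables (you specialize arguments to $t$ and $s(t)$), whereas your conclusion, like the lemma as stated, only gives polynomiality in $t_1$; this is closed either by the symmetry of the Eynard--Orantin forms (built into Definition~\ref{def:EO}) or by running your argument in each variable.
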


\begin{proof}
The $t_1$-dependence of $W_{g,n}^H(t_1,\dots,t_n)$
only comes from the factor
\begin{multline*}
\left(
\frac{1}{t-t_1} -\frac{1}{s(t)-t_1}
\right)
\\
= \frac{1}{t} +
+\frac{1}{3}\;\frac{1}{t^2}
+
\left(
t_1^2 -\frac{2}{3} t_1 +\frac{2}{9}
\right)\frac{1}{t^3}
+\left(
t_1^2-\frac{2}{3}t_1+\frac{22}{135}
\right)\frac{1}{t^4}+\cdots
\end{multline*}
in the recursion kernel (\ref{eq:Hkernel}). 
Since each
coefficient of the  $t$-expansion of $K^H(t,t_1)$ 
is a polynomial in $t_1$, Lemma follows.
\end{proof}

\section{The stationary Gromov-Witten
invariants of $\bP^1$}
\label{sect:GWP1}

In this section we study the generating functions
of stationary Gromov-Witten invariants of $\bP^1$.
The conjectural relation between these invariants and the
Eynard-Orantin
topological recursion was first formulated
in \cite{NS2}. We identify the spectral curve
and the recursion kernel using the unstable 
geometries.

Morally speaking, the space $\bP^1$ we are
considering here
 appears as the zero section of a Calabi-Yau
threefold known as the \emph{resolved conifold} 
$X$, which is the total 
space of the rank $2$ vector bundle
$\cO_{\bP^1}(-1)\dsum\cO_{\bP^1}(-1)$
over ${\bP^1}$. Let  
$L\subset X$ be a special Lagrangian submanifold
\cite{Hitchin, Joyce}. Then the intersection $L\cap \bP^1$
of the special Lagrangian and the zero
section is a circle on $\bP^1$. 
If we holomorphically
embed a bordered Riemann surface with
$n$ boundary components into $X$ in a way that
each boundary is mapped to a distinct circle on
$\bP^1$, then the whole Riemann surface is
necessarily mapped to $\bP^1$. Thus we are 
 considering \emph{open} Gromov-Witten invariants
of $\bP^1$. And if we make these circles on $\bP^1$ 
small and centered around 
$n$ distinct points of $\bP^1$,
then we are naturally led to the stationary 
Gromov-Witten
invariants of $\bP^1$. 

So our main object of this section is the 
Laplace transform of the stationary Gromov-Witten
invariants
\begin{equation}
\label{eq:FgnP}
F_{g,n}^{\bP^1}(x_1,\dots,x_n)
= \sum_{\mu_1,\dots,\mu_n=0}^\infty
\la \tau_{\mu_1}(\omega)
\cdots
 \tau_{\mu_n}(\omega)\ra_{g,n}
 \prod_{i=1}^n \mu_i !
\prod_{i=1}^n \frac{1}{x^{\mu_i+1}},
\end{equation}
where $\omega\in A_0(\bP^1)$ is the point class
generator, and
\begin{equation}
\label{eq:GWP1}
\la \tau_{\mu_1}(\omega)
\cdots
 \tau_{\mu_n}(\omega)\ra_{g,n}
 =
 \int_{[\Mbar_{g,n}(\bP^1,d)]^{\text{virt}}}
 \psi_1^{\mu_1}ev_1^*(\omega)
 \cdots
  \psi_1^{\mu_n}ev_n^*(\omega)
\end{equation}
is a stationary Gromov-Witten invariant of $\bP^1$.
More precisely, $\Mbar_{g,n}(\bP^1,d)$ is the 
moduli stack of stable morphisms
from a connected $n$-pointed curve 
$(C,p_1,\dots,p_n)$ into
$\bP^1$ 
of degree $d$ such that $f(p_i)$, $i=1,\dots,n$,
are distinct,
and $ev_i$ is the natural evaluation morphism
$$
ev_i:\Mbar_{g,n}(\bP^1,d) \owns
[f,(C,p_1,\dots,p_n)]\longmapsto f(p_i) \in\bP^1.
$$
The Gromov-Witten invariant (\ref{eq:GWP1}) 
vanishes unless 
\begin{equation}
\label{eq:P1degree}
2g-2+2d = \mu_1+\cdots +\mu_n.
\end{equation}
The sum in (\ref{eq:FgnP})
 is the Laplace transform if we identity
\begin{equation}
\label{eq:x=ewP}
x = e^w.
\end{equation}
The extra numerical factor $\prod_{i=1}^n \mu_i !$
is included in (\ref{eq:FgnP}) because of the 
polynomial growth order of
\begin{equation}
\label{eq:mu!}
\la \tau_{\mu_1}(\omega)
\cdots
 \tau_{\mu_n}(\omega)\ra_{g,n}
 \prod_{i=1}^n \mu_i !
 \end{equation} 
 for large $\mu$
 that is established in \cite{OP2}.
 Indeed (\ref{eq:mu!}) is essentially
 a special type of \emph{Hurwitz numbers} that
counts the number of certain coverings of $\bP^1$.

To determine the spectral curve and 
the annulus amplitude, 
 we need to consider unstable geometries
 $(g,n)=(0,1)$ and $(0,2)$. From \cite{OP2}
 we learn
 \begin{equation}
 \label{eq:P01}
 \la \tau_{\mu_1}(\omega)\ra_{0,1}
 =
 \la \tau_{2d-2}(\omega)\ra_{0,1}
 =\left(\frac{1}{d!}\right)^2.
 \end{equation}
  To compute a closed formula for 
$$
 F_{0,1}^{\bP^1}(x)
 =\sum_{\mu_1=0}^\infty
 \la \tau_{\mu_1}(\omega)\ra_{0,1}\;\mu_1!
 \frac{1}{x^{\mu_1+1}}
 =
 \sum_{d=1}^\infty
 \frac{(2d-2)!}{d!d!}\;
 \frac{1}{x^{2d-1}},
 $$ 
we notice that the generating function
 of Catalan numbers (\ref{eq:Catalan})
$$
 z(x) = \sum_{m=0}^\infty C_m \frac{1}{x^{2m+1}}
$$
 provides again an effective tool. Thus we have
 \begin{multline}
 \label{eq:F01Pequation}
 \left(
 x\frac{d}{dx}-1
 \right)F_{0,1}^{\bP^1}(x)
 =
- 2\sum_{d=1}^\infty
 \frac{(2d-2)!}{(d-1)!d!}\;\frac{1}{x^{2d-1}}
 \\
 =
 - 2\sum_{m=0}^\infty
 \frac{(2m)!}{(m+1)!m!}\;\frac{1}{x^{2m+1}}
 =
 -2z(x).
 \end{multline}
 The advantage of using the Catalan series $z(x)$ is
 that we know its inverse function (\ref{eq:x=x(z)}).
Using (\ref{eq:dx and dz}), we see that 
(\ref{eq:F01Pequation}) is equivalent to
\begin{equation}
\label{eq:F01Pequation in z}
\left(
\frac{z^3+z}{z^2-1}\;\frac{d}{dz}-1
\right)
F_{0,1}^{\bp^1}(z)
=-2z.
\end{equation}
The solution of (\ref{eq:F01Pequation in z}) is given by
$$
F_{0,1}^{\bp^1}(z) =
-\frac{2}{z}-\left(z+\frac{1}{z}\right)
\log(1+z^2) +c\left(z+\frac{1}{z}\right),
$$
with   a constant of integration $c$.
Since 
$$
z\rar 0\Longrightarrow x\rar \infty
\Longrightarrow F_{0,1}^{\bP^1}\rar 0,
$$
we conclude that $c=2$. 
We  thus obtain
\begin{equation}
\label{eq:F01Pinz}
F_{0,1}^{\bP^1}(z) = 2z-\left(z+\frac{1}{z}\right)
\log(1+z^2),
\end{equation}
and therefore,
\begin{equation}
\label{eq:W01P}
W_{0,1}^{\bP^1}(z) =
d F_{0,1}^{\bP^1}(z)
= -\log(1+z^2)\; d\left(
z+\frac{1}{z}\right).
\end{equation}

\begin{thm}
\label{thm:Pspectral}
The spectral curve for the stationary 
Gromov-Witten  invariants of $\bP^1$ is
given by
\begin{equation}
\label{eq:Pspectral}
\begin{cases}
x=z+\frac{1}{z}\\
y=-\log(1+z^2).
\end{cases}
\end{equation}
\end{thm}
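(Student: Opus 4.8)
The plan is to read the spectral curve directly off its defining data in Definition~\ref{def:EO}. By the relation (\ref{eq:ydx}), the spectral curve is the image of the embedding $t\mapsto\bigl(x(t),y(t)\bigr)$ determined by $W_{0,1}(t)=y(t)\,dx(t)$, so it suffices to identify $W_{0,1}^{\bP^1}$ and read off $y$ against the chosen $x$-projection. This one-form is the content of (\ref{eq:W01P}): starting from the Okounkov--Pandharipande value $\la\tau_{2d-2}(\omega)\ra_{0,1}=(1/d!)^2$, applying the Euler operator $x\,d/dx-1$ to the generating series collapses the shifted binomials into Catalan numbers and produces $-2z(x)$, with $z(x)$ the Catalan series of (\ref{eq:z(x)}); this is precisely what forces the $x$-projection $x=z+1/z$, inverse to $z(x)$. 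Solving the resulting first-order ordinary differential equation in $z$, normalized by $F_{0,1}^{\bP^1}\to0$ as $x\to\infty$ (i.e.\ $z\to0$), yields (\ref{eq:F01Pinz}) and hence $W_{0,1}^{\bP^1}=-\log(1+z^2)\,d(z+1/z)=-\log(1+z^2)\,dx$. Comparing with (\ref{eq:ydx}) gives $y=-\log(1+z^2)$, which is (\ref{eq:Pspectral}).

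To make sure the pair $(W_{0,1}^{\bP^1},W_{0,2}^{\bP^1})$ genuinely constitutes a spectral curve in the sense of Definition~\ref{def:EO}, and to pin down the recursion kernel, I would also compute the Laplace transform $F_{0,2}^{\bP^1}$ of the $(0,2)$ stationary invariants $\la\tau_{\mu_1}(\omega)\tau_{\mu_2}(\omega)\ra_{0,2}$, available in closed form from \cite{OP2}, and check that $W_{0,2}^{\bP^1}=d_1d_2F_{0,2}^{\bP^1}$ takes the canonical form demanded by the second condition in Definition~\ref{def:EO}, in the coordinate $t$ resolving the two ramification points $z=\pm1$ of $x=z+1/z$, namely $z=(t+1)/(t-1)$ --- the same coordinate used for the dessins problem. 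Since the $x$-projection here coincides with that of the dessins, I expect $W_{0,2}^{\bP^1}$ to equal $W_{0,2}^{D}=dt_1dt_2/(t_1+t_2)^2$ of Proposition~\ref{prop:F02D} and the recursion kernel to be (\ref{eq:Dkernel}). The verification would follow the proof of Proposition~\ref{prop:F02D}: apply $x_1\partial_{x_1}+x_2\partial_{x_2}$ to reduce the claimed identity to an algebraic relation in $z_1,z_2$, match finitely many leading Laurent coefficients to fix the initial condition, and invoke uniqueness for the resulting Euler partial differential equation.

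The $(0,1)$ computation is elementary once the Catalan substitution is spotted, so the main technical obstacle will be the closed-form evaluation of $F_{0,2}^{\bP^1}$: putting the Okounkov--Pandharipande formula for the $(0,2)$ stationary invariants into a form to which the Euler partial differential equation argument applies, and carefully tracking the logarithmic and constant terms so that the outcome is exactly the $W_{0,2}$ of Definition~\ref{def:EO} rather than differing by an extra exact or meromorphic piece. This is also the reason the theorem stops at identifying the spectral-curve data rather than proving the Norbury--Scott recursion for all $(g,n)$: the Laplace transform of the full Gromov--Witten recursion, the analogue of the cut-and-join and DVV arguments of the previous sections, is not carried out here.
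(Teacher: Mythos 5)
Your proposal is correct and follows essentially the same route as the paper: the Okounkov--Pandharipande value $\la\tau_{2d-2}(\omega)\ra_{0,1}=(1/d!)^2$, the Euler operator $x\,d/dx-1$ collapsing the series to $-2z(x)$ with $z(x)$ the Catalan series, the first-order ODE in $z$ normalized by $F_{0,1}^{\bP^1}\to 0$ as $z\to 0$, and the identification $W_{0,1}^{\bP^1}=-\log(1+z^2)\,dx$ read against (\ref{eq:ydx}). Your outline of the $(0,2)$ verification (Euler PDE plus matching of leading Laurent coefficients, yielding $W_{0,2}^{\bP^1}=dt_1\,dt_2/(t_1+t_2)^2$) is exactly the argument the paper carries out separately in Theorem~\ref{thm:F02P}.
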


\begin{rem}
Since $dx=0$ has two zeros at $z=\pm 1$, 
we also use as 
our preferred coordinate 
\begin{equation}
\label{eq:t}
t=\frac{z+1}{z-1} \Longleftrightarrow
z=\frac{t+1}{t-1}.
\end{equation}
The $\log$ singularity on the $t$-plane is
the right semicircle of radius $1$ connecting
$i$ to $-i$ (see Figure~\ref{fig:semicircle}).
The expression of $W_{0,1}^{\bP^1}$
in terms of the preferred coordinate is
\begin{equation}
\label{eq:W01Pint}
W_{0,1}^{\bP^1}(t)=
\frac{8t}{(t^2-1)^2} \log\left(
\frac{2(t^2+1)}{(t-1)^2}
\right)dt.
\end{equation}
\end{rem}

\begin{figure}[htb]
\centerline{\epsfig{file=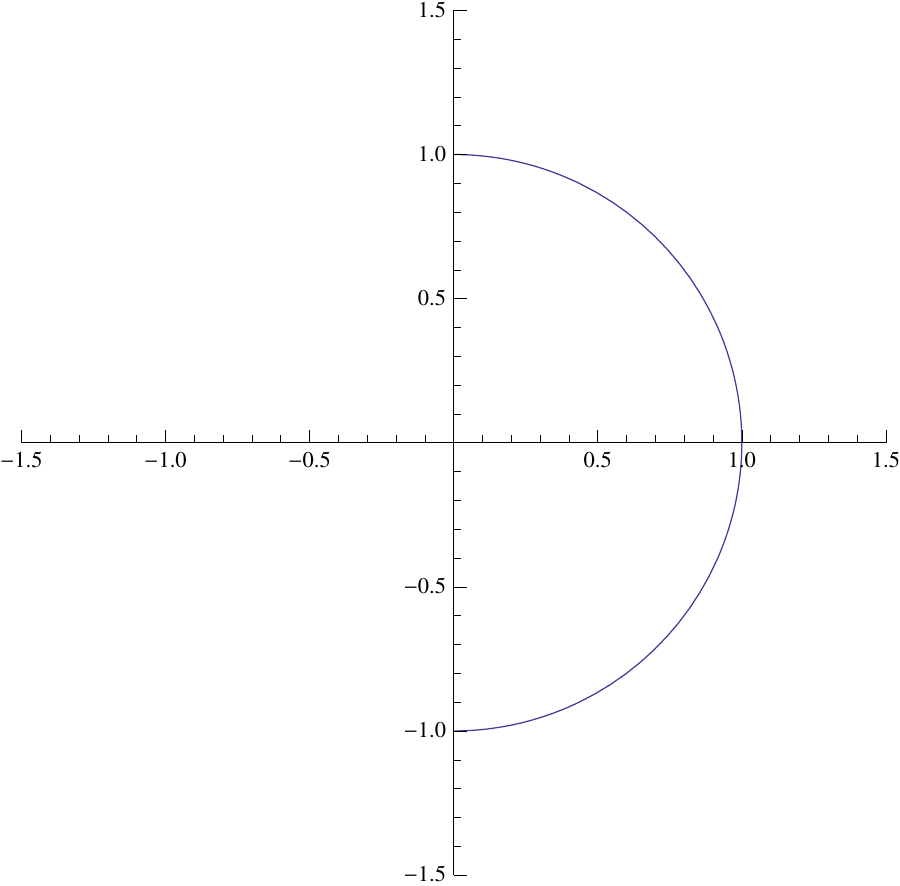, width=2in}}
\caption{The spectral curve for the stationary
Gromov-Witten invariants of $\bP^1$ is 
the complex $t$-plane minus the semicircle.}
\label{fig:semicircle}
\end{figure}

\begin{rem}
The function $x=z+\frac{1}{z}$ is expected here,
since it is the Landau-Ginzburg model that is 
homologically
mirror dual to $\bP^1$ \cite{Ballard}.
\end{rem}

\begin{rem}
The Galois conjugate of $x=z+\frac{1}{z}$ is
 globally defined, and is given by
 \begin{equation}
 \label{eq:Galois}
 t\longmapsto \bar{t}=-t.
 \end{equation}
\end{rem}

\begin{rem}
Since 
\begin{equation}
\label{eq:1/(1-z)}
\frac{1}{1-z(x)}=\sum_{k=0}^\infty z(x)^k
=1+\sum_{n=0} ^\infty \binom{n}{\lfloor 
\frac{n}{2}\rfloor}\frac{1}{x^{n+1}},
\end{equation}
we can express $t$ in the  branch near $t=-1$
as a function in $x$. The result
is 
\begin{equation}
\label{eq:t in x}
t+1 = \frac{z(x)+1}{z(x)-1}+1= 2-\frac{2}{1-z(x)}
=-\sum_{n=0}^\infty 2 \binom{n}{\lfloor 
\frac{n}{2}\rfloor}\frac{1}{x^{n+1}},
\end{equation}
which is also absolutely convergent for $|x|>2$.
\end{rem}

\begin{rem}
We are using the normalized Gromov-Witten invariants
(\ref{eq:mu!}) to compute the Laplace transform
(\ref{eq:FgnP}). If we did not include the $\mu!$ 
factor in our computation of the spectral curve,
then we would have encountered
with  the \emph{modified 
Bessel function}
$$
I_0(2x) = \sum_{m=1}^\infty \frac{1}{(m!)^2}
x^{2m},
$$
instead of  $z(x)$, in computing (\ref{eq:F01Pinz}). 
We note that $I_0(2x)$ appears in \cite{DZ2}
in the exact same context of computing the
Gromov-Witten invariants of $\bP^1$. 
We prefer the Catalan number series $z(x)$
over the modified Bessel function
mainly  because  the  inverse function of $z(x)$
takes a simple form $x=z+\frac{1}{z}$.
\end{rem}

Motivated by the technique developed in 
\cite{BM,EMS,MZ}
 for single Hurwitz numbers,
let us define
\begin{equation}
\label{eq:xin}
\xi_n(t)=\sum_{k=0} ^\infty \binom{2k}{k}
k^n \frac{1}{x^{2k+1}}, \qquad n\ge 0,
\end{equation}
and
\begin{equation}
\label{eq:etan}
\eta_n(t)=\sum_{k=0} ^\infty \binom{2k+1}{k}
k^n \frac{1}{x^{2k+2}}, \qquad n\ge 0.
\end{equation}
We then have
\begin{equation}
\label{eq:xinrecursion}
\xi_{n+1}(t) =
-\half
\left(x\frac{d}{dx}+1\right) \xi_n(t)
=
\left(
\frac{t^4-1}{8t}\frac{d}{dt}-\half
\right)\xi_n(t)
\end{equation}
and
\begin{equation}
\label{eq:etanrecursion}
\eta_{n+1}(t) =
-\half
\left(x\frac{d}{dx}+2\right) \eta_n(t)
=
\left(
\frac{t^4-1}{8t}\frac{d}{dt}-1
\right)\eta_n(t).
\end{equation}
The initial values are computed as follows:
\begin{multline}
\label{eq:xi0inz}
\xi_0(t)=
\half \left(
1-x\frac{d}{dx}
\right)
\sum_{m=0}^\infty \frac{1}{m+1}
\binom{2m}{m}\frac{1}{x^{2m+1}}
\\
=
\half
\left(1-\frac{z(z^2+1)}{z^2-1}\;\frac{d}{dz}\right)z
=-\frac{z}{z^2-1}
=-\frac{t^2-1}{4t},
\end{multline}
and similarly 
\begin{equation}
\label{eq:eta0}
\eta_0(t) = -\frac{(t+1)^2}{4t}.
\end{equation}
We note that $\xi_n(t)$ and $\eta_n(t)$ are
Laurent polynomials 
of degree $2n+1$
for every $n\ge 0$. Since they
are defined as functions in $x$, we have the 
reciprocity property
\begin{equation}
\label{eq:reciprocity}
\begin{aligned}
\xi_n(1/t) &= -\xi_n(t)
\\
\eta_n(1/t) &=\eta_n(t).
\end{aligned}
\end{equation}
This follows from 
$$
t\longmapsto 
\frac{1}{t}
\Longrightarrow 
x\longmapsto -x.
$$

The annulus amplitude requires 
$(g,n)=(0,2)$ Gromov-Witten invariants.
They can be calculated from the 
$(g,n)=(0,1)$ invariants using the 
Topological Recursion Relation
\cite{Getzler}. The results are
\begin{equation}
\label{eq:P02}
 \la \tau_{\mu_1}(\omega)
 \tau_{\mu_2}(\omega)\ra_{0,2}
 =
 \begin{cases}
 \frac{1}{(m_1!)^2 (m_2!)^2}
 \;\frac{1}{ (m_1+m_2+1)}
 \qquad \mu_1=2m_1, \mu_2=2m_2\\
 \frac{1}{(m_1!)^2 (m_2!)^2}
 \;\frac{1}{ (m_1+m_2+2)}
  \qquad \mu_1=2m_1+1, \mu_2=2m_2+1.
 \end{cases}
\end{equation}

\begin{thm}
\label{thm:F02P}
The annulus amplitude is given by
\begin{equation}
\label{eq:F02P}
F_{0,2}^{\bP^1}(z_1,z_2)
=-\log(1-z_1z_2).
\end{equation}
Hence we have
\begin{equation}
\label{eq:W02P}
W_{0,2}^{\bP^1}(t_1,t_2)
= \frac{dt_1\cdot dt_2}{(t_1-t_2)^2}-
\frac{dx_1\cdot dx_2}{(x_1-x_2)^2}
=
\frac{dt_1\cdot dt_2}{(t_1+t_2)^2}.
\end{equation}
\end{thm}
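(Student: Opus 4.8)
The plan is to reduce the theorem to a closed-form evaluation of the double Laurent series $F_{0,2}^{\bP^1}(x_1,x_2)$, exactly in the spirit of Proposition~\ref{prop:F02D} and Theorem~\ref{thm:F01H}. First I would substitute the two-point stationary invariants~(\ref{eq:P02}) into the definition~(\ref{eq:FgnP}) and collapse the factorial ratios into binomial coefficients: the factor $\mu_i!$ turns the even contribution $\la\tau_{2m_i}(\omega)\cdots\ra$ into $\binom{2m_i}{m_i}$ and the odd contribution $\la\tau_{2m_i+1}(\omega)\cdots\ra$ into $\binom{2m_i+1}{m_i}$. The series then splits into a sector from even $\mu_i=2m_i$, with summand proportional to $\frac{1}{m_1+m_2+1}\binom{2m_1}{m_1}\binom{2m_2}{m_2}\,x_1^{-(2m_1+1)}x_2^{-(2m_2+1)}$, and a sector from odd $\mu_i=2m_i+1$, with summand proportional to $\frac{1}{m_1+m_2+2}\binom{2m_1+1}{m_1}\binom{2m_2+1}{m_2}\,x_1^{-(2m_1+2)}x_2^{-(2m_2+2)}$.

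The key step is to apply the Euler operator $x_1\frac{d}{dx_1}+x_2\frac{d}{dx_2}$. On the first sector a monomial $x_1^{-(2m_1+1)}x_2^{-(2m_2+1)}$ acquires the factor $-2(m_1+m_2+1)$, which cancels the harmonic denominator; on the second sector the factor is $-2(m_1+m_2+2)$, again cancelling it. Recognizing $\xi_0(t)=\sum_m\binom{2m}{m}x^{-(2m+1)}$ and $\eta_0(t)=\sum_m\binom{2m+1}{m}x^{-(2m+2)}$ as the $n=0$ functions of~(\ref{eq:xin})--(\ref{eq:etan}), one gets $\big(x_1\frac{d}{dx_1}+x_2\frac{d}{dx_2}\big)F_{0,2}^{\bP^1}=-2\,\xi_0(x_1)\xi_0(x_2)-2\,\eta_0(x_1)\eta_0(x_2)$. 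On the other side, applying the same operator to $-\log\!\big(1-z(x_1)z(x_2)\big)$ via the chain rule~(\ref{eq:dx and dz}) and $\partial_{z_1}(-\log(1-z_1z_2))=z_2/(1-z_1z_2)$ gives $-2z_1z_2(1+z_1z_2)/\big((z_1^2-1)(z_2^2-1)\big)$. Now substituting the closed forms $\xi_0=-z/(z^2-1)$ from~(\ref{eq:xi0inz}) and $\eta_0=-z^2/(z^2-1)$ (which is~(\ref{eq:eta0}) rewritten through $z=\tfrac{t+1}{t-1}$) makes the two expressions coincide. Hence $\phi:=F_{0,2}^{\bP^1}+\log(1-z_1z_2)$ is annihilated by $x_1\frac{d}{dx_1}+x_2\frac{d}{dx_2}$; as $\phi$ is a power series in $1/x_1,1/x_2$ with vanishing constant term, and every monomial of positive total degree is an eigenvector with nonzero eigenvalue, $\phi\equiv 0$, which is~(\ref{eq:F02P}). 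Applying $d_1d_2$ and invoking the identity $\frac{dt_1\,dt_2}{(t_1-t_2)^2}-\frac{dx_1\,dx_2}{(x_1-x_2)^2}=\frac{dt_1\,dt_2}{(t_1+t_2)^2}$ already established in~(\ref{eq:W02D}) — the relations among $x$, $z$, $t$ here being identical to the dessin case — yields~(\ref{eq:W02P}). An equivalent shortcut: compare the coefficient of $x_1^{-(\mu_1+1)}x_2^{-(\mu_2+1)}$ in $F_{0,2}^{\bP^1}$ with the dessin count $D_{0,2}(\mu_1+1,\mu_2+1)$ of Proposition~\ref{prop:N02}, using $\binom{2m+2}{m+1}=2\binom{2m+1}{m}$ to absorb the factor $\tfrac14$; this identifies $F_{0,2}^{\bP^1}$ with $F_{0,2}^D$ term by term and lets one quote Proposition~\ref{prop:F02D} directly.

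The main obstacle is the parity bookkeeping: the even-$\mu$ and odd-$\mu$ pieces carry genuinely different binomial weights and different shifts ($+1$ versus $+2$) in the harmonic denominator, and the whole content of the theorem is that after the degree shift $x^{-\mu}\mapsto x^{-(\mu+1)}$ they reassemble into the single clean function $-\log(1-z_1z_2)$. The compatibility rests on one small algebraic fact — $\eta_0(x)=z^2/(1-z^2)$, equivalently the Pascal/Catalan relations $\binom{2m+1}{m}=2\binom{2m}{m}-C_m$ and $z^2=xz-1$ — which is exactly what makes the odd sector supply the $z_1^2z_2^2$ term needed to match the $\log$. Everything downstream of that identity is a routine Euler-equation uniqueness argument, or a direct appeal to the already-proven Proposition~\ref{prop:F02D}.
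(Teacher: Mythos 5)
Your argument is correct and is essentially the paper's own proof: expand $F_{0,2}^{\bP^1}$ using (\ref{eq:P02}), apply the Euler operator $x_1\frac{d}{dx_1}+x_2\frac{d}{dx_2}$ so that the harmonic denominators cancel, identify the result as $-2\xi_0(x_1)\xi_0(x_2)-2\eta_0(x_1)\eta_0(x_2)$ with $\xi_0=-z/(z^2-1)$ and $\eta_0=-z^2/(z^2-1)$, match this against the same operator applied to $-\log(1-z_1z_2)$, and conclude by uniqueness of holomorphic solutions of the Euler equation with vanishing constant term. Your closing ``shortcut'' identifying $F_{0,2}^{\bP^1}$ term by term with $F_{0,2}^D$ under the shift $\mu\mapsto\mu+1$ is a nice extra observation not made in the paper, but note that both it and your claim that $\mu!\,\la \tau_{2m+1}(\omega)\cdots\ra$ collapses to $\binom{2m+1}{m}$ presuppose reading the odd case of (\ref{eq:P02}) with denominator $m_1!(m_1+1)!\,m_2!(m_2+1)!$ rather than the printed $(m_1!)^2(m_2!)^2$ --- the same reading the paper itself uses implicitly when it equates the odd sector with $z'(x_1)z'(x_2)$.
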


\begin{proof}
From  (\ref{eq:P02}) we calculate
\begin{multline*}
F_{0,2}^{\bP^1}(z_1,z_2)
=
\sum_{\mu_1,\mu_2=0}^\infty 
\la \tau_{\mu_1}(\omega)
\tau_{\mu_2}(\omega)\ra_{0,2}\;
\mu_1 !\mu_2!\;\frac{1}{x_1^{\mu_1+1}}\;
\frac{1}{x_2^{\mu_2+1}}
\\
=
\sum_{m_1,m_2=0}^\infty
\frac{1}{ (m_1+m_2+1)}
\binom{2m_1}{m_1}\binom{2m_2}{m_2}
\frac{1}{x_1^{2m_1+1}}\;
\frac{1}{x_2^{2m_2+1}}
\\
+
\sum_{m_1,m_2=0}^\infty
\frac{1}{ (m_1+m_2+2)}(2m_1+1)(2m_2+1)
\binom{2m_1}{m_1}\binom{2m_2}{m_2}
\frac{1}{x_1^{2m_1+2}}\;
\frac{1}{x_2^{2m_2+2}}.
\end{multline*}
Thus we have
\begin{multline*}
\left(
x_1\frac{d}{dx_1}+x_2\frac{d}{dx_2}
\right)F_{0,2}^{\bP^1}(z_1,z_2)
\\
=
-2\sum_{m_1,m_2=0}^\infty
\binom{2m_1}{m_1}\binom{2m_2}{m_2}
\frac{1}{x_1^{2m_1+1}}\;
\frac{1}{x_2^{2m_2+1}}
\\
-
2\sum_{m_1,m_2=0}^\infty
(2m_1+1)(2m_2+1)
\binom{2m_1}{m_1}\binom{2m_2}{m_2}
\frac{1}{x_1^{2m_1+2}}\;
\frac{1}{x_2^{2m_2+2}}
\\
=
-2\xi_0(x_1)\xi_0(x_2)
-2z'(x_1)z'(x_2)
\\
=
-2\frac{z_1}{z_1^2-1}\;
\frac{z_2}{z_2^2-1}
-2 \frac{z_1^2}{z_1^2-1}\;\frac{z_2^2}{z_2^2-1}
=-2 \frac{z_1z_2(1+z_1z_2)}{(z_1^2-1)(z_2^2-1)},
\end{multline*}
where $\xi_0(x)$ is calculated in (\ref{eq:xi0inz}),
and from  (\ref{eq:dx and dz}) we know
$$
z'(x) = \frac{dz}{dx}=\frac{z^2}{z^2-1}.
$$
On the other hand, 
\begin{multline*}
\left(
x_1\frac{d}{dx_1}+x_2\frac{d}{dx_2}
\right)
\left(-\log(1-z_1z_2)\right)
\\
=
\left(
\frac{z_1(z_1^2+1)}{z_1^2-1}\;\frac{d}{dz_1}
+
\frac{z_2(z_2^2+1)}{z_2^2-1}\;\frac{d}{dz_2}
\right)\left(-\log(1-z_1z_2)\right)
\\
=
\left(
\frac{(z_1^2+1)}{z_1^2-1}
+
\frac{(z_2^2+1)}{z_2^2-1}
\right)\frac{z_1z_2}{1-z_1z_2}
=
-2\;\frac{z_1z_2(1+z_1z_2)}{(z_1^2-1)(z_2^2-1)}.
\end{multline*}
Therefore, 
\begin{multline*}
\left(
x_1\frac{d}{dx_1}+x_2\frac{d}{dx_2}
\right)
\left(
F_{0,2}^{\bP^1}(z_1,z_2)+\log(1-z_1z_2)
\right)
\\
=
\left(
x_1\frac{d}{dx_1}+x_2\frac{d}{dx_2}
\right)
\Bigg(
\sum_{\mu_1,\mu_2=0}^\infty 
\la \tau_{\mu_1}(\omega)
\tau_{\mu_2}(\omega)\ra_{0,2}\;
\mu_1 !\mu_2!\;\frac{1}{x_1^{\mu_1+1}}\;
\frac{1}{x_2^{\mu_2+1}}
\\
-\sum_{n=1}^\infty
\frac{1}{n}\left(
\sum_{m=0}^\infty C_m\frac{1}{x_1^{2m+1}}
\sum_{m=0}^\infty C_m\frac{1}{x_2^{2m+1}}
\right)^n
\Bigg) =0.
\end{multline*}
Since the kernel of the Euler differential operator 
is the constants, and since actual
computation shows that the first few expansion
terms of the  Laurent series 
$$
\sum_{\mu_1,\mu_2=0}^\infty 
\la \tau_{\mu_1}(\omega)
\tau_{\mu_2}(\omega)\ra_{0,2}\;
\mu_1 !\mu_2!\;\frac{1}{x_1^{\mu_1+1}}\;
\frac{1}{x_2^{\mu_2+1}}
-\sum_{n=1}^\infty
\frac{1}{n}\left(
\sum_{m=0}^\infty C_m\frac{1}{x_1^{2m+1}}
\sum_{m=0}^\infty C_m\frac{1}{x_2^{2m+1}}
\right)^n
$$
are $0$, we
complete the proof of (\ref{eq:F02P}).
\end{proof}

Using $\xi_n(t)$ and $\eta_n(t)$ of
(\ref{eq:xin}) and (\ref{eq:etan})
and the classical topological recursion relation
\cite{Getzler}, we can systematically
calculate  the Laplace transform of stationary 
Gromov-Witten invariants. 
First let us consider 
 $(g,n)=(0,3)$. Since the sum of the 
 descendant indices 
of 
$$
\la \tau_{\mu_1}(\omega)
\tau_{\mu_2}(\omega)
\tau_{\mu_3}(\omega)\ra_{0,3}
$$ 
is even,
we have
\begin{equation}
\label{eq:GW03}
\begin{aligned}
\la \tau_{2m_1}(\omega)
\tau_{2m_2}(\omega)
\tau_{2m_3}(\omega)\ra_{0,3}
&=
\frac{1}{m_1^2m_2^2m_3^2},
\\
\la \tau_{2m_1}(\omega)
\tau_{2m_2+1}(\omega)
\tau_{2m_3+1}(\omega)\ra_{0,3}
&=
\frac{(m_2+1)(m_3+1)}
{m_1^2(m_2+1)^2(m_3+1)^2}.
\end{aligned}
\end{equation}
The Laplace transform is therefore 
\begin{multline}
\label{eq:F03P}
F_{0,3}^{\bP^1}(t_1,t_2,t_3)
=
\sum_{\mu_1\mu_2\mu_3\ge 0}
\la \tau_{\mu_1}(\omega)
\tau_{\mu_2}(\omega)
\tau_{\mu_3}(\omega)\ra_{0,3}
\mu_1!\mu_2!\mu_3!
\frac{1}{x_1^{\mu_1+1}}\cdot
\frac{1}{x_2^{\mu_2+1}}\cdot
\frac{1}{x_3^{\mu_3+1}}
\\
=
\sum_{m_1,m_2,m_3\ge 0}
\binom{2m_1}{m_1}
\binom{2m_2}{m_2}
\binom{2m_3}{m_3}
\frac{1}{x_1^{2m_1+1}}\cdot
\frac{1}{x_2^{2m_2+1}}\cdot
\frac{1}{x_3^{2m_3+1}}
\\
+
\sum_{m_1,m_2,m_3\ge 0}
\binom{2m_1}{m_1}
\binom{2m_2+1}{m_2}
\binom{2m_3+1}{m_3}
\frac{1}{x_1^{2m_1+1}}\cdot
\frac{1}{x_2^{2m_2+2}}\cdot
\frac{1}{x_3^{2m_3+2}}
\\
+
\sum_{m_1,m_2,m_3\ge 0}
\binom{2m_1+1}{m_1}
\binom{2m_2}{m_2}
\binom{2m_3+1}{m_3}
\frac{1}{x_1^{2m_1+2}}\cdot
\frac{1}{x_2^{2m_2+1}}\cdot
\frac{1}{x_3^{2m_3+2}}
\\
+
\sum_{m_1,m_2,m_3\ge 0}
\binom{2m_1+1}{m_1}
\binom{2m_2+1}{m_2}
\binom{2m_3}{m_3}
\frac{1}{x_1^{2m_1+2}}\cdot
\frac{1}{x_2^{2m_2+2}}\cdot
\frac{1}{x_3^{2m_3+1}}
\\
=
\xi_0(t_1)\xi_0(t_2)\xi_0(t_3)
+\xi_0(t_1)\eta_0(t_2)\eta_0(t_3)
+\eta_0(t_1)\xi_0(t_2)\eta_0(t_3)
+\eta_0(t_1)\eta_0(t_2)\xi_0(t_3)
\\
=
-\frac{1}{16}(t_1+1)(t_2+1)(t_3+1)
\left(1-\frac{1}{t_1t_2t_3}
\right),
\end{multline}
which is indeed a Laurent polynomial. 
Since it is an odd degree polynomial in 
$\xi_n(t)$'s, we have the reciprocity 
$$
F_{0,3}^{\bP^1}(1/t_1,1/t_2,1/t_3)
=
-F_{0,3}^{\bP^1}(t_1,t_2,t_3).
$$

The $n=1$ stationary invariants
are concretely calculated in 
\cite{OP2}. We have
\begin{equation}
\label{eq:n=1}
\begin{aligned}
\la \tau_{2d}\ra_{1,1} &= \frac{1}{24}\left(
\frac{1}{d!}\right)^2 (2d-1)
\\
\la \tau_{2d+2}\ra_{2,1} &= \left(
\frac{1}{d!}\right)^2 
\left(
\frac{1}{5! \;4^2}(2d-1)+\frac{1}{24^2}
\binom{2d-1}{2}\right)
\\
\la \tau_{2g-2+2d}\ra_{g,1} &= \left(
\frac{1}{d!}\right)^2 
\sum_{\ell=1}^g \binom{2d-1}{\ell}
\sum_{\substack{k_i>0\\
k_1+\cdots+k_\ell = g}}
\prod_{i=1}^\ell \frac{1}{(2k_i+1)!\; 4^{k_i}}.
\end{aligned}
\end{equation}
We thus obtain
\begin{multline}
\label{eq:F11P}
F_{1,1}^{\bP^1}(t_1)
=\frac{1}{24}\sum_{d=0}^\infty 
\binom{2d}{d}(2d-1)\frac{1}{x_1^{2d+1}}
=
\frac{1}{24}
\left(
2\xi_1(t_1)-\xi_0(t_1)
\right)
\\
=
-\frac{1}{384}
\left(
t_1^3-7t_1+\frac{7}{t_1}-\frac{1}{t_1^3}
\right).
\end{multline}
To calculate the $g=2$ case we need to do the following.
\begin{multline}
\label{eq:F21P}
F_{2,1}^{\bP^1}(t_1)
=
\sum_{d=0}^\infty 
\frac{(2d+2)!}{d!\;d!}
\left(
\frac{1}{5! \;4^2}(2d-1)+\frac{1}{24^2}
\binom{2d-1}{2}\right)
\frac{1}{x_1^{2d+3}}
\\
=
\left(\frac{d}{dx_1}\right)^2
\sum_{d=0}^\infty 
\binom{2d}{d}
\left(
\frac{1}{5! \;4^2}(2d-1)+\frac{1}{24^2}\;
(2d^2-3d+1)\right)
\frac{1}{x_1^{2d+1}}
\\
=
\left(-\frac{(t^2-1)^2}{8t}\;\frac{d}{dt}\right)^2
\left[
\frac{1}{5! \;4^2}(2\xi_1(t_1)-\xi_0(t_1))
+
\frac{1}{24^2}
\left(
2\xi_2(t_1)-3\xi_1(t_1)+\xi_0(t_1)
\right)
\right]
\\
=
-\frac{1}{2^{19}\cdot 3^2\cdot 5}
\;\frac{(t^2-1)^3}{t^9}
\big(
525 t_1^{12}-1470t_1^{10}
+1107t_1^8+527t_1^6+1107t_1^4-1470t_1^2+525
\big).
\end{multline}

\begin{prop}
\label{prop:Fg1P}
$F_{g,1}^{\bP^1}(t_1)$ is a 
Laurent polynomial of degree $6g-3$ with the 
reciprocity 
$$
F_{g,1}^{\bP^1}(1/t_1)
=
-F_{g,1}^{\bP^1}(t_1).
$$
\end{prop}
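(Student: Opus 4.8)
The plan is to feed the explicit one-point evaluation (\ref{eq:n=1}) into the Laplace transform (\ref{eq:FgnP}) and then to recognize the outcome as the $(2g-2)$-fold $x$-derivative of a finite combination of the Laurent polynomials $\xi_n$ of (\ref{eq:xin}). First I would invoke the dimension constraint (\ref{eq:P1degree}) to restrict the sum to $\mu_1=2g-2+2d$, $d\ge 0$; setting
\[
P_g(d):=\sum_{\ell=1}^{g}\binom{2d-1}{\ell}\ \sum_{\substack{k_i>0\\ k_1+\cdots+k_\ell=g}}\ \prod_{i=1}^{\ell}\frac{1}{(2k_i+1)!\,4^{k_i}},
\]
the formula (\ref{eq:n=1}) reads $\la\tau_{2g-2+2d}\ra_{g,1}=(1/d!)^2P_g(d)$, so $F_{g,1}^{\bP^1}=\sum_{d\ge 0}\frac{(2g-2+2d)!}{(d!)^2}P_g(d)\,x^{-(2g-2+2d+1)}$. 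The key elementary point is that $P_g$ is a polynomial in $d$ of degree exactly $g$: each $\binom{2d-1}{\ell}$ has degree $\ell$, and the coefficient of $d^g$ comes only from $\ell=g$ with the partition $(1,\dots,1)$, namely $2^g/(g!\,12^g)\ne 0$. Writing $P_g(d)=\sum_{n=0}^{g}c_n d^n$ with $c_g\ne 0$, using $\binom{2d}{d}=(2d)!/(d!)^2$ together with $(2d+1)(2d+2)\cdots(2d+2g-2)\,x^{-(2d+2g-1)}=\bigl(\tfrac{d}{dx}\bigr)^{2g-2}x^{-(2d+1)}$, and comparing with the definition (\ref{eq:xin}) of $\xi_n$, I would obtain
\[
F_{g,1}^{\bP^1}=\Bigl(\tfrac{d}{dx}\Bigr)^{2g-2}\sum_{d\ge 0}\binom{2d}{d}P_g(d)\,x^{-(2d+1)}=\Bigl(\tfrac{d}{dx}\Bigr)^{2g-2}\Bigl(\sum_{n=0}^{g}c_n\,\xi_n(t)\Bigr),
\]
which is exactly the pattern already visible in (\ref{eq:F11P}) and (\ref{eq:F21P}).

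Next I would pass to the preferred coordinate $t$. Since $x=z+\tfrac{1}{z}=\tfrac{2(t^2+1)}{t^2-1}$ for the curve (\ref{eq:Pspectral}), one has $\tfrac{d}{dx}=-\tfrac{(t^2-1)^2}{8t}\,\tfrac{d}{dt}=:D$, the operator that already occurs in (\ref{eq:xinrecursion}) and (\ref{eq:F21P}). Because $D$ sends $t^k\mapsto-\tfrac{k}{8}\bigl(t^{k+2}-2t^k+t^{k-2}\bigr)$, it carries an odd Laurent polynomial in $t$ (one with only odd powers of $t$) to an odd Laurent polynomial and raises the top degree by exactly $2$, the leading coefficient being multiplied by a nonzero number since the top exponent stays odd. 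By induction from $\xi_0(t)=-\tfrac{t^2-1}{4t}$ via (\ref{eq:xinrecursion}), each $\xi_n(t)$ is an odd Laurent polynomial of degree $2n+1$; as the degrees $1,3,\dots,2g+1$ are distinct and $c_g\ne 0$, the combination $h:=\sum_{n=0}^{g}c_n\xi_n$ is an odd Laurent polynomial of degree exactly $2g+1$. Applying $D$ a total of $2g-2$ times then shows that $F_{g,1}^{\bP^1}=D^{2g-2}h$ is a Laurent polynomial in $t$ of degree $2g+1+2(2g-2)=6g-3$.

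For the reciprocity I would use that the series $\sum_d\frac{(2g-2+2d)!}{(d!)^2}P_g(d)\,x^{-(2d+2g-1)}$ contains only odd powers of $x$, while $t\mapsto 1/t$ induces $x\mapsto -x$ (substitute into $x=\tfrac{2(t^2+1)}{t^2-1}$, or cf.\ the remark before (\ref{eq:Galois})). Near $t=-1$, where $z\to 0$, $x\to\infty$ and the series converges, this yields $F_{g,1}^{\bP^1}(1/t)=-F_{g,1}^{\bP^1}(t)$ as germs, hence as Laurent polynomials. Equivalently, $x(1/t)=-x(t)$ forces $D$ to anticommute with $t\mapsto 1/t$, so $D^{2g-2}$ commutes with it, while $h(1/t)=-h(t)$ by (\ref{eq:reciprocity}).

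The hard part will be the degree bookkeeping needed for the sharp value $6g-3$: one must check that $P_g$ has degree exactly $g$, hence that $h$ has degree exactly $2g+1$, and that the leading term survives all $2g-2$ applications of $D$ without cancellation. The remaining steps are routine manipulations of generating functions, the only further care being the harmless choice of branch of $x$ in the reciprocity argument; throughout one takes $g\ge 1$, since for $g=0$ the transform (\ref{eq:F01Pinz}) is genuinely not a Laurent polynomial.
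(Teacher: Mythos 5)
Your argument is correct and follows essentially the same route as the paper: both write $F_{g,1}^{\bP^1}$ as $\left(\frac{d}{dx}\right)^{2g-2}$ applied to a linear combination of the $\xi_n$'s (you package the $\ell$-sum into a single polynomial $P_g(d)$ where the paper keeps separate Laurent polynomials $\Xi_\ell$ for each $\ell$), read the degree off from $\deg\xi_n=2n+1$ plus $2$ per derivative, and deduce reciprocity from oddness in $x$ together with $x(1/t)=-x(t)$. Your added verification that the top-degree term never cancels — $c_g\ne 0$ (the correct value is $1/(g!\,12^g)$; your $2^g/(g!\,12^g)$ carries a stray $2^g$, harmlessly) and the fact that $D$ multiplies the leading coefficient by a nonzero number since the top exponent stays odd — is a welcome sharpening of a point the paper leaves implicit.
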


\begin{proof}
First we calculate the binomial coefficient
$$
\binom{2d-1}{\ell}=\frac{1}{\ell!}
(2d-1)(2d-2)\cdots(2d-\ell)
=
\frac{1}{\ell!}
\left(
2^\ell d^\ell -\frac{\ell(\ell+1)}{2}d^{\ell-1}
+\cdots+(-1)^\ell\ell!
\right)
$$
as a polynomial in $d$, and then replace each $d^i$
with $\xi_i(t_1)$. The result is a linear combination 
of $\xi_0(t_1),\dots,\xi_\ell(t_1)$. Let $\Xi_\ell(t_1)$
denote the resulting Laurent polynomial of
degree $2\ell+1$.
Then we have an expression
\begin{multline}
\label{eq:Fg1P}
F_{g,1}^{\bP^1}(t_1)
=
\left(\frac{d}{dx_1}
\right)^{2g-2}
\sum_{d=0}^\infty
\binom{2d}{d} 
\sum_{\ell=1}^g \binom{2d-1}{\ell}
\sum_{\substack{k_i>0\\
k_1+\cdots+k_\ell = g}}
\prod_{i=1}^\ell \frac{1}{(2k_i+1)!\; 4^{k_i}}\;
\frac{1}{x_1^{2d+1}}
\\
=
\left(-\frac{(t^2-1)^2}{8t}\;\frac{d}{dt}\right)^{2g-2}
\left[
\sum_{\ell=1}^g \;
\Xi_\ell(t_1)
\sum_{\substack{k_i>0\\
k_1+\cdots+k_\ell = g}}
\prod_{i=1}^\ell \frac{1}{(2k_i+1)!\; 4^{k_i}}
\right],
\end{multline}
which is a Laurent polynomial of degree 
$2(2g-2)+2g+1=6g-3$. 
The reciprocity property follows from 
(\ref{eq:reciprocity}) and the
$x_1$ expression of (\ref{eq:Fg1P}), where
$x_1$ changes to $-x_1$. In particular, the
even order 
differentiation in $x_1$ is not affected by this change.
\end{proof}

The Eynard-Orantin recursion is for the differential 
forms $W_{g,n}^{\bP^1}(t_1,\dots,t_n)$. 
We need the recursion kernel. 
From (\ref{eq:W01Pint}) and (\ref{eq:W02P}),
we compute 
\begin{multline}
\label{eq:Pkernel}
K^{\bP^1}(t,t_1)=
\half\;
\frac{\int_t ^{-t}W_{0,2}^{\bP^1}(\cdot,t_1)}
{W_{0,1}^{\bP^1}(-t)-W_{0,1}^{\bP^1}(t)}
\\
=
\half
\left(
\frac{1}{t+t_1}+\frac{1}{t-t_1}
\right)
\frac{1}{\log\left(\frac{2(t^2+1)}{(t+1)^2}\right)
-\log\left(\frac{2(t^+1)}{(t-1)^2}\right)}
\frac{(t^2-1)^2}{-8tdt}\cdot dt_1
\\
=
\frac{1}{16}\left(
\frac{1}{t+t_1}+\frac{1}{t-t_1}
\right)
\frac{1}{\log\left(\frac{(t-1)^2}{(t+1)^2}\right)}
\frac{(t^2-1)^2}{tdt}\cdot dt_1.
\end{multline}
We note the reciprocity property of the kernel
\begin{equation}
\label{eq:Kreciprocity}
K^{\bP^1}(1/t,1/t_1) =-K^{\bP^1}(t,t_1).
\end{equation}
The topological recursion (\ref{eq:EO})  
becomes
\begin{multline}
\label{eq:EOP1}
W_{g,n}^{\bP^1}(t_1,t_2,\dots,t_n)
= \frac{1}{2\pi i} 
\oint_{\gam} K^{\bP^1}(t,t_1)
\Bigg[
W_{g-1,n+1}^{\bP^1}(t,{-t},t_2,\dots,t_n)\\
+
\sum^{\text{No $(0,1)$ terns}} _
{\substack{g_1+g_2=g\\I\sqcup J=\{2,3,\dots,n\}}}
W_{g_1,|I|+1}^{\bP^1}(t,t_I) W_{g_2,|J|+1}^{\bP^1}
({-t},t_J)
\Bigg],
\end{multline}
where the residue calculation is taken along
the integration contour $\gam$ 
(see Figure~\ref{fig:contourD})
consisting of two concentric
circles of radius $\epsilon$ and $1/\epsilon$ for 
a small $\epsilon$
centered around $t=0$, with the inner circle 
positively oriented and the outer circle  
negatively oriented. 
Since there is a log singularity in the complex
$t$-plane, we cannot use the residue calculus
method to evaluate the integral
at $t=t_1$ and $t=-t_1$.
Thus the residue calculation of
(\ref{eq:EOP1}) is performed around the
neighborhood of $t=0$ and $t=\infty$.

So let us provide two expansion formulas
for the kernel $K^{\bP^1}(t,t_1)$, assuming 
that $t_1\in\bC^*$ is away from the log 
singularity of Figure~\ref{fig:semicircle}.
The transcendental factor of $K^{\bP^1}(t,t_1)$
has an expansion
\begin{equation}
\label{eq:log}
\frac{4}{t\log\left(\frac{(t-1)^2}{(t+1)^2}\right)}
=
-\frac{1}{t^2}+\frac{1}{3}+\frac{4}{45}t^2
+\frac{44}{945}t^4+\frac{428}{14175}t^6
+\frac{10196}{467775} t^8+\cdots
\end{equation}
around $t=0$. The denominator of the coefficient
of $t^{2k-2}$ is given by
$$
\prod_{q =3, \text{  prime}} ^{2k+1}
q^{\left\lfloor \frac{2k}{q-1}\right\rfloor} 
=
3^{\left\lfloor k\right\rfloor}\cdot
5^{\left\lfloor \frac{k}{2}\right\rfloor}
\cdot
7^{\left\lfloor \frac{k}{3}\right\rfloor}
\cdots,
$$
which is the same as $\mu(L_k)$ of
\cite[Lemma 1.5.2]{Hirzebruch}.
The expansion of 
$\frac{1}{t+t_1}+\frac{1}{t-t_1}$
at $t=0$ is given by
$$
\frac{1}{t+t_1}+\frac{1}{t-t_1}
=-2t\;\frac{1}{t_1^2}\frac{1}{1-\frac{t^2}{t_1^2}}
=-2\sum_{n=0}^\infty
\frac{t^{2n+1}}{t_1^{2n+2}}.
$$
From the expression (\ref{eq:Pkernel})
and the above consideration, 
we know that 
around $t=0$, $K^{\bP^1}(t,t_1)$
starts from $t^{-1}$, and that
the coefficient of $t^{2n-1}$ is 
a Laurent polynomial
in $t_1^2$  starting 
from $\frac{1}{32}t_1^{-(2n+2)}$
up to $t_1^{-2}$ with  rational coefficients.
More concretely, 
 we have
\begin{multline}
\label{eq:Kexpansion0}
K^{\bP^1}(t,t_1) 
= 
\Bigg[
\frac{1}{t}
\left(
\frac{1}{32}\frac{1}{t_1^2}
\right)
+
t\left(
\frac{1}{32}\frac{1}{t_1^4}-\frac{7}{96}
\frac{1}{t_1^2}
\right)
+
t^3\left(
\frac{1}{32}\frac{1}{t_1^6}-\frac{7}{96}
\frac{1}{t_1^4}+\frac{71}{1440}\frac{1}{t_1^2}
\right)
\\
+
t^5\left(
\frac{1}{32}\frac{1}{t_1^8}-\frac{7}{96}
\frac{1}{t_1^6}+\frac{71}{1440}\frac{1}{t_1^4}
-\frac{191}{30240}\frac{1}{t_1^2}
\right)
\\
+
t^7\left(
\frac{1}{32}\frac{1}{t_1^{10}}-\frac{7}{96}
\frac{1}{t_1^8}+\frac{71}{1440}\frac{1}{t_1^6}
-\frac{191}{30240}\frac{1}{t_1^4}-
\frac{23}{28350}\frac{1}{t_1^2}
\right)
\\
+
t^9 
\left(
\frac{1}{32}\frac{1}{t_1^{12}}-\frac{7}{96}
\frac{1}{t_1^{10}}+\frac{71}{1440}\frac{1}{t_1^8}
-\frac{191}{30240}\frac{1}{t_1^6}
-\frac{23}{28350}\frac{1}{t_1^4}
-\frac{233}{935550}\frac{1}{t_1^2}
\right)
+\cdots
\Bigg] \frac{1}{dt}\cdot dt_1.
\end{multline}
Similarly,  around $t=\infty$ we have
\begin{multline}
\label{eq:Kexpansioninfinity}
K^{\bP^1}(t,t_1) 
=
\Bigg[
- t^3\frac{1}{32}+
t\left(-
\frac{1}{32}t_1^2+\frac{7}{96}
\right)
+
\frac{1}{t}\left(-
\frac{1}{32}{t_1^{4}}+\frac{7}{96}
{t_1^{2}}-\frac{71}{1440}
\right)
\\
+
\frac{1}{t^3}\left(-
\frac{1}{32}{t_1^{6}}+\frac{7}{96}
{t_1^{4}}-\frac{71}{1440}{t_1^2}
+\frac{191}{30240}
\right)
\\
+
\frac{1}{t^5}\left(-
\frac{1}{32}{t_1^{8}}+\frac{7}{96}
{t_1^{6}}-\frac{71}{1440}{t_1^4}
+\frac{191}{30240}{t_1^2}+
\frac{23}{28350}
\right)
\\
+
\frac{1}{t^7}\left(-
\frac{1}{32}{t_1^{10}}+\frac{7}{96}
{t_1^{8}}-\frac{71}{1440}{t_1^6}
+\frac{191}{30240}{t_1^4}+
\frac{23}{28350}{t_1^2}+\frac{233}{935550}
\right)
+\cdots
\Bigg] \frac{1}{dt}\cdot dt_1.
\end{multline}

\begin{thm}
\label{thm:polynomial}
The Eynard-Orantin differential form
$W_{g,n}^{\bP^1}(t_1,\dots,t_n)$
is a Laurent polynomial in $t_1^2,t_2^2,\dots, t_n^2$
of degree $2(3g-3+n)$
in the stable range $2g-2+n>0$. It satisfies the
reciprocity property
\begin{equation}
\label{eq:Wreciprocity}
W_{g,n}^{\bP^1}(1/t_1,\dots,1/t_n)
=(-1)^n W_{g,n}^{\bP^1}(t_1,\dots,t_n)
\end{equation}
as a meromorphic symmetric $n$-form.
The highest degree terms form a homogeneous 
polynomial of degree $2(3g-3+n)$, which
is given by
\begin{equation}
\label{eq:leading}
\widehat{W}_{g,n}^{\bP^1}(t_1,\dots,t_n)
=\frac{(-1)^n}{2^{2g-2+2n}}
\sum_{k_1,\dots,k_n\ge 0}
\la \tau_{k_1}\cdots\tau_{k_n}\ra_{g,n}
\prod_{i=1}^n
\left[
 (2k_1+1)!!
\left(\frac{t_i}{2}\right)^{2k_i}dt_i
\right].
\end{equation}
Indeed it is the same as the generating function
of the $\psi$-class intersection numbers
(\ref{eq:WgnK}).
\end{thm}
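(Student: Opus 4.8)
The plan is an induction on $2g-2+n$ using the Eynard-Orantin recursion \eqref{eq:EOP1}. The base cases $(g,n)=(0,3)$ and $(1,1)$ are in hand: $W_{0,3}^{\bP^1}$ and $W_{1,1}^{\bP^1}$, obtained in \eqref{eq:F03P} and \eqref{eq:F11P}, are Laurent polynomials in the $t_i^2$ of the asserted degrees $0$ and $2$, they satisfy \eqref{eq:Wreciprocity}, and \eqref{eq:leading} holds for them by direct inspection. Throughout the induction one uses that, because of the logarithmic singularity, the integral in \eqref{eq:EOP1} is the sum of the local contributions $\mathrm{Res}_{t=0}$ and $\mathrm{Res}_{t=\infty}$, which are computed from the two kernel expansions \eqref{eq:Kexpansion0} and \eqref{eq:Kexpansioninfinity}, each coefficient of a power of $t$ there being itself a Laurent polynomial in $t_1^2$.

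For the inductive step, first Laurent polynomiality and the degree bound. By the induction hypothesis the bracket in \eqref{eq:EOP1}, after setting the first two arguments of the $W_{g-1,n+1}$ term and of the stable products equal to $t$ and $-t$, is a Laurent polynomial in $t^2$; the remaining summands are the genus-zero two-point pieces $\tfrac{dt\,dt_j}{(t+t_j)^2}W_{g,n-1}^{\bP^1}(-t,\ldots)-\tfrac{dt\,dt_j}{(t-t_j)^2}W_{g,n-1}^{\bP^1}(t,\ldots)$, which are rational in $t$ and expand as power series at $t=0$ and as $t^{-2}(1+\cdots)$ at $t=\infty$ (their poles at $t=\pm t_j$ lie between the two circles of $\gamma$ and so do not enter the residues). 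Taking $\mathrm{Res}_{t=0}$ and $\mathrm{Res}_{t=\infty}$ termwise against \eqref{eq:Kexpansion0}--\eqref{eq:Kexpansioninfinity} produces for $W_{g,n}^{\bP^1}$ an explicit recursion of exactly the shape of the Kontsevich recursion \eqref{eq:KEOinw}, with the universal numerical coefficients replaced by those of \eqref{eq:log} and with the two-point pieces contributing the $\partial_{t_j}$-terms. Every summand is then manifestly a Laurent polynomial in $t_1^2,\ldots,t_n^2$, and the same degree count that makes the right side of \eqref{eq:KEOinw} homogeneous of degree $2(3g-3+n)$ gives $\deg W_{g,n}^{\bP^1}\le 2(3g-3+n)$.

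Reciprocity \eqref{eq:Wreciprocity} follows by applying $t_i\mapsto 1/t_i$ to \eqref{eq:EOP1} and changing the integration variable by $t\mapsto 1/t$. One uses $K^{\bP^1}(1/t,1/t_1)=-K^{\bP^1}(t,t_1)$ \eqref{eq:Kreciprocity}, the invariance $W_{0,2}^{\bP^1}(1/t_1,1/t_2)=W_{0,2}^{\bP^1}(t_1,t_2)$ read off from \eqref{eq:W02P}, the compatibility of the Galois conjugation $\bar t=-t$ with inversion (since $\overline{1/t}=1/\bar t$), and the invariance of the contour $\gamma$ under inversion (which exchanges its two circles and reverses orientation); combined with the inductive reciprocity of the lower $W_{g',n'}^{\bP^1}$, the signs collect to the factor $(-1)^n$. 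In particular the bottom degree of $W_{g,n}^{\bP^1}$ equals $-2(3g-3+n)$.

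Finally, the leading term. From \eqref{eq:Kexpansion0} every coefficient of $K^{\bP^1}(t,t_1)$ near $t=0$ has strictly negative $t_1$-degree, so $\mathrm{Res}_{t=0}$ contributes only to the bottom-degree part; from \eqref{eq:Kexpansioninfinity} one has, near $t=\infty$, $K^{\bP^1}(t,t_1)=K^K(t,t_1)+(\text{strictly lower }t_1\text{-degree})$, where $K^K(t,t_1)=-\tfrac{t^5}{32(t^2-t_1^2)}\tfrac{1}{dt}\,dt_1$ is the Kontsevich kernel \eqref{eq:Kkernel}. Since $K^K$ is jointly homogeneous of degree $3$ and, by the induction hypothesis, the top-degree parts of the lower $W_{g',n'}^{\bP^1}$ are the homogeneous forms $W_{g',n'}^K$, the degree-$2(3g-3+n)$ part of the right side of \eqref{eq:EOP1} is exactly $\mathrm{Res}_{t=\infty}$ of the Kontsevich integrand, i.e.\ the right side of \eqref{eq:KEO} (the two-point pieces matching those of \eqref{eq:KEO} up to the harmless relabeling $t_j\mapsto -t_j$). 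Since \eqref{eq:leading} already holds in the base cases, uniqueness of the solution of the Kontsevich/DVV recursion \eqref{eq:KEOinw}--\eqref{eq:DVV} forces $\widehat W_{g,n}^{\bP^1}=W_{g,n}^K$ for all stable $(g,n)$; this is \eqref{eq:leading} after the elementary rewriting in \eqref{eq:WgnK}, and it pins the top degree to be exactly $2(3g-3+n)$. The principal obstacle is this final step: one must make the relation $K^{\bP^1}=K^K+(\text{lower})$, together with the handling of the two-point contributions and of $\mathrm{Res}_{t=0}$, precise enough to be sure that passing from the $\bP^1$ recursion to the Kontsevich recursion neither creates nor destroys a term of top degree $2(3g-3+n)$.
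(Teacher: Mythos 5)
Your proof follows the paper's argument essentially verbatim: induction on $2g-2+n$ via (\ref{eq:EOP1}), evaluation of the contour integral as residues at $t=0$ and $t=\infty$ through the kernel expansions (\ref{eq:Kexpansion0})--(\ref{eq:Kexpansioninfinity}), reciprocity deduced from (\ref{eq:Kreciprocity}), and identification of the top-degree part of $K^{\bP^1}$ at $t=\infty$ with the Kontsevich kernel (\ref{eq:Kkernel}) so that uniqueness of the solution of (\ref{eq:KEOinw}) forces (\ref{eq:leading}) --- and the paper is no more detailed than you are at the step you flag as the principal obstacle. One small correction to a parenthetical remark: reciprocity sends a coefficient monomial $\prod_i t_i^{2a_i}$ to $\prod_i t_i^{-2a_i-2}$, so the bottom degree of the coefficient Laurent polynomial is $-2(3g-3+n)-2n$ (e.g.\ the $t_1^{-4}$ term of $W_{1,1}^{\bP^1}$), not $-2(3g-3+n)$.
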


\begin{proof}
The statement is proved 
by induction on $2g-2+n$ using
the recursion (\ref{eq:EOP1}).
The initial cases 
$(g,n)=(1,1)$ and $(g,n)=(0,3)$
are easily verified from the 
concrete calculations below.
Since we are expanding 
$\frac{1}{t+t_1}+\frac{1}{t-t_1}$
around $t=0$ and $t=\infty$, 
it is obvious that the recursion produces
a Laurent polynomial in 
$t_1^2,t_2^2,\dots, t_n^2$ as the result. 

The expression of (\ref{eq:Kexpansioninfinity})
tells us that the residue calculation at infinity
increases the degree by $4$. This is because 
the leading term of the coefficient of 
$t^{-(2n+1)}$ is $t_1^{2n+4}$, and the 
residue calculation picks up the term $t^{2n}$. 
By the induction hypothesis, the right-hand side
of (\ref{eq:EOP1}) without the kernel 
term has homogenous degree 
$2(3g-3+n) -4$. 
The reciprocity property also follows by
induction using (\ref{eq:Kreciprocity}).

The leading terms of $W_{g,n}^{\bP^1}(t_1,\dots,t_n)$
satisfy a topological recursion themselves. 
We can extract the terms in the kernel that
produce the leading terms of the differential forms
from (\ref{eq:log}) or (\ref{eq:Kexpansioninfinity}).
The result is 
\begin{equation}
\label{eq:WKkernel}
K^{\text{WK}}(t,t_1)=-\frac{1}{32}\; t^3
\sum_{k=0}^\infty \frac{t_1^{2n}}{t^{2n}}
\frac{1}{dt}\cdot dt_1
=
-\half
\left(
\frac{1}{t-t_1}+\frac{1}{t+t_1}
\right)
\frac{1}{32}\; t^4\cdot 
\frac{1}{dt}\cdot dt_1,
\end{equation}
which is identical to \cite[Theorem~7.4]{CMS},
and also to (\ref{eq:Kkernel}).
Since the topological recursion uniquely
determines all the differential forms from 
the initial condition, and again since the
$(g,n)=(0,3)$ and $(1,1)$ cases satisfy
(\ref{eq:leading}), by induction we obtain
(\ref{eq:leading}) for all stable values of $(g,n)$. 
\end{proof}

The $(g,n)=(1,1)$ Eynard-Orantin differential 
form is computed using 
(\ref{eq:W11}).
\begin{multline}
\label{eq:W11Pa}
W_{1,1}^{\bP^1} (t_1) =\frac{1}{2\pi i}
\int_{\gam}K^{\bP^1}(t,t_1) 
\left[W_{0,2}^{\bP^1} (t, -t)
+\frac{dx\cdot dx_1}{(x-x_1)^2}
\right]
=
-\frac{1}{2\pi i}
\int_{\gam}K^{\bP^1}(t,t_1) 
\frac{dt\cdot dt}{4 t^2}
\\
=
-\frac{1}{64}
\left(
\frac{1}{2\pi i}
\int_{\gam}\left(
\frac{1}{t+t_1}+\frac{1}{t-t_1}
\right)
\frac{1}{\log\left(\frac{(t-1)^2}{(t+1)^2}\right)}
\frac{(t^2-1)^2}{t^3}
dt
\right) dt_1
\\
=
\left(
-\frac{1}{128}t_1^2+\frac{7}{384}+\frac{7}{384}
\frac{1}{t_1^2}-\frac{1}{128}\frac{1}{t_1^4}
\right) dt_1.
\end{multline}
This is in agreement of $W_{1,1}^{\bP^1}(t_1)
=dF_{1,1}^{\bP^1}(t_1)$ and (\ref{eq:F11P}).
From (\ref{eq:EOP1}) we have
\begin{multline}
\label{eq:W03P}
W_{0,3}^{\bP^1}(t_1,t_2,t_3)
\\
=
\frac{1}{2\pi i}
\int_{\gam}K^{\bP^1}(t,t_1) 
\left[
W_{0,2}^{\bP^1}(t,t_2)W_{0,2}^{\bP^1}(-t,t_3)
+
W_{0,2}^{\bP^1}(t,t_3)W_{0,2}^{\bP^1}(-t,t_2)
\right]
\\
=-\frac{1}{16}
\left(1+\frac{1}{t_1^2\;t_2^2\;t_3^2}
\right)dt_1dt_2dt_3.
\end{multline}
It is also in agreement with (\ref{eq:F03P}).

Norbury and Scott conjecture the following

\begin{conj}[Norbury-Scott Conjecture \cite{NS2}]
For $(g,n)$ in the stable range we have
\begin{equation}
\label{eq:NS}
W_{g,n}^{\bP^1}(t_1,\dots,t_n)
= d_1\cdots d_n F_{g,n}^{\bP^1}(t_1,\dots,t_n).
\end{equation}
\end{conj}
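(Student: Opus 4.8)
The plan is to reproduce, for the stationary Gromov--Witten theory of $\bP^1$, the mechanism that proves Theorem~\ref{thm:DEO intro} for dessins and Theorem~\ref{thm:BM conjecture} for single Hurwitz numbers: start from a closed recursive relation among the A-model invariants, apply the Laplace transform (\ref{eq:FgnP}), and check that the transformed relation is precisely the Eynard--Orantin recursion (\ref{eq:EOP1}) for the spectral curve (\ref{eq:Pspectral}) and kernel (\ref{eq:Pkernel}). The natural source of the A-model relation is the integrable structure of $\bP^1$: either the Toda/Virasoro constraints of Okounkov--Pandharipande and Dubrovin--Zhang, which give a recursion in $g$, $n$ and the descendant indices for $\la \tau_{\mu_1}(\omega)\cdots\tau_{\mu_n}(\omega)\ra_{g,n}$, or the GW/Hurwitz correspondence, under which the normalized invariant (\ref{eq:mu!}) becomes a double Hurwitz number whose ramification over $0$ and $\infty$ is a completed cycle, and which therefore obeys a cut-and-join type equation analogous to (\ref{eq:caj}). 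Either way one first produces the ``$\mu_1\cdot(\text{invariant})$'' equation that plays here the role of (\ref{eq:Dgn recursion}), with the parity of each $\mu_i$ entering through the degree constraint (\ref{eq:P1degree}).

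Concretely I would proceed as follows. (1) Write the A-model recursion, organizing the one-index sums on its right-hand side by the parity of the surviving index. (2) Multiply by $\prod_i\mu_i!$ and apply $\sum x_i^{-\mu_i-1}$; using the recursions (\ref{eq:xinrecursion})--(\ref{eq:etanrecursion}) for the auxiliary functions $\xi_n(t)$ and $\eta_n(t)$, each one-index sum collapses to a Laurent polynomial in $t$, exactly as in the $(0,2)$ and $(0,3)$ computations (\ref{eq:F02P}) and (\ref{eq:F03P}). (3) Differentiate by $d_1\cdots d_n$; the even family $\xi_n$ and odd family $\eta_n$ recombine into the Galois-paired contributions $W(t,\ldots)$ and $W(-t,\ldots)$ demanded by (\ref{eq:EOP1}), while inverting $x\,d/dx$ against $W_{0,1}^{\bP^1}$ of (\ref{eq:W01P}) produces the transcendental kernel factor $1/\log\big((t-1)^2/(t+1)^2\big)$ of (\ref{eq:Pkernel}) through its expansion (\ref{eq:log}). (4) Re-interpret the identity as the contour integral (\ref{eq:EOP1}) around $\gamma$, i.e.\ as a residue computation at $t=0$ and $t=\infty$ only. (5) Close the induction on $2g-2+n$: the base cases already match, since $W_{0,1}^{\bP^1}$, $W_{0,2}^{\bP^1}$, $W_{1,1}^{\bP^1}$ and $W_{0,3}^{\bP^1}$ agree with (\ref{eq:W01P}), (\ref{eq:W02P}), (\ref{eq:W11Pa}) and (\ref{eq:W03P}), while Theorem~\ref{thm:polynomial} guarantees that both sides are Laurent polynomials of the same degree and reciprocity, so uniqueness of solutions of (\ref{eq:EOP1}) finishes the argument.

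The hard part is exactly where this paper stops. First, unlike the dessin count (\ref{eq:Dgn recursion}) or the cut-and-join equation (\ref{eq:caj}), the $\bP^1$ descendant recursion does not obviously Laplace-transform into a single residue kernel: the Toda/Virasoro relations entangle the degree $d$ with the descendant indices, and one must show that the generating series repackage into the $\xi_n,\eta_n$ families cleanly for \emph{all} $(g,n)$, not merely the low cases verified here. Second, and more fundamentally, the spectral curve is transcendental --- $y=-\log(1+z^2)$, so the kernel $K^{\bP^1}$ carries a genuine logarithm, and, as noted after (\ref{eq:EOP1}), the contour $\gamma$ cannot be contracted onto $t=\pm t_i$. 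One must therefore control the contribution of the branch cut along the semicircle of Figure~\ref{fig:semicircle}, an analogue of the auxiliary contour $\gamma_{[0,1]}$ used in the Hurwitz proof but now needed at every $(g,n)$ rather than only for $W_{1,1}$. Showing that this cut-contribution is either zero or is exactly the piece supplied by the A-model recursion is, I expect, the genuine obstacle; until it is settled one obtains only that (\ref{eq:Pspectral}) is the \emph{right} spectral curve, which is what is proved above.
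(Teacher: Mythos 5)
The statement you are asked to prove is stated in the paper as a \emph{conjecture}, and the authors say explicitly that they are unable to prove it: what Section~\ref{sect:GWP1} establishes is only the identification of the spectral curve (\ref{eq:Pspectral}), the kernel (\ref{eq:Pkernel}), the structural Theorem~\ref{thm:polynomial} about $W_{g,n}^{\bP^1}$, and the agreement of the base cases $W_{1,1}^{\bP^1}$ and $W_{0,3}^{\bP^1}$ with the Laplace transforms of the geometric invariants. The paper even closes the section by posing, as an open Question, exactly the missing ingredient your plan requires in step (1): an equation among the stationary invariants whose Laplace transform is (\ref{eq:EOP1}). So there is no proof in the paper against which to check yours.

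Your proposal is a program, not a proof, and to your credit you say so. The gap is genuine and is concentrated in two places. First, steps (1)--(4) are never carried out: you do not exhibit the A-model recursion (from Toda/Virasoro constraints or from the GW/Hurwitz correspondence with completed cycles), and without it the appeal in step (5) to ``uniqueness of solutions of (\ref{eq:EOP1})'' is vacuous --- uniqueness only transfers the statement once you have shown that the geometric generating functions $d_1\cdots d_nF_{g,n}^{\bP^1}$ satisfy the \emph{same} recursion, which is precisely the conjecture. Matching base cases plus uniqueness of the B-model side proves nothing about the A-model side. Second, you correctly flag that the transcendental factor $1/\log\bigl((t-1)^2/(t+1)^2\bigr)$ in $K^{\bP^1}$ prevents contracting $\gamma$ onto $t=\pm t_i$, so the residue bookkeeping that makes the dessin proof (Appendix~\ref{app:LTProof}) and the Hurwitz proof work does not transfer verbatim; but identifying an obstacle is not the same as overcoming it. As written, your argument establishes exactly what the paper establishes --- that (\ref{eq:Pspectral}) is the right spectral curve and that the low $(g,n)$ cases agree --- and no more. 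If you want to pursue this, the historically successful route is not the term-by-term Laplace transform of a cut-and-join analogue but the reformulation of the descendant theory of $\bP^1$ in Givental form and a general theorem identifying such semisimple CohFT partition functions with topological recursion on the associated spectral curve; that machinery is outside the scope of this paper.
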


The conjecture is verified for $g=0$ and $g=1$ cases
in \cite{NS2}. We recall that the Eyanrd-Orantin 
recursion for simple Hurwitz numbers is essentially
the Laplace transform of the cut-and-join
equation \cite{EMS}. For the case of the 
counting problem
of clean Belyi morphisms
 the recursion is the Laplace transform
of the edge-contraction operation of
Theorem~\ref{thm:Dgn}.

\begin{quest}
What is the equation among the stationary 
Gromov-Witten invariants of $\bP^1$
whose Laplace
transform is the Eynard-Orantin recursion
(\ref{eq:EOP1})?
\end{quest}

\begin{appendix}

\section{Calculation of the Laplace transform}
\label{app:LTProof}
\setcounter{section}{1}

In this appendix we give the proof of 
Theorem~\ref{thm:LTofD}.

\begin{prop}
\label{prop:LT of Dgn}
Let us use the $x_j$-variables defined
by $x_j=e^{w_j}$, and write
$$
W_{g,n}^D(t_1,\dots,t_n) =
w_{g,n}(x_1,\dots,x_n)\; dx_1\cdots dx_n.
$$
Then
the Laplace transform of the recursion formula
(\ref{eq:Dgn recursion}) is the following 
differential recursion:
\begin{multline}
\label{eq:LT of Dgn}
-x_1\;w_{g,n}(x_1,\dots,x_n)
\\
=
\sum_{j=2}^n
 \frac{\partial}{\partial x_j}
     \left(
    \frac{1}{x_j-x_1}
    \left(w_{g,n-1}(x_2,\dots,x_n)
    -
    w_{g,n-1}(x_1,x_2,\dots,\widehat{x_j},\dots,x_n)
    \right)
    \right)
    \\
  +
 w_{g-1,n+1}(x_1,x_1,x_2,\dots,x_n)
 +
 \sum_{\substack{g_1+g_2=g\\
 I\sqcup J=\{2,\dots,n\}}}
 w_{g_1,|I|+1}(x_1,x_I)
  w_{g_2,|J|+1}(x_1,x_J).
\end{multline}
\end{prop}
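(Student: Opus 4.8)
The plan is to apply to both sides of the edge‑contraction identity~(\ref{eq:Dgn recursion}) the linear operator $\mathcal{L}$ that first multiplies by $\prod_{i=1}^n x_i^{-\mu_i}$ and sums over all $\mu\in\bZ_+^n$, and then applies $\partial_{x_2}\cdots\partial_{x_n}$. Throughout one works with formal Laurent series in $1/x_1,\dots,1/x_n$, equivalently with functions holomorphic for $|x_i|$ large (using the convergence estimates of Section~\ref{sect:dessin}). Since $F_{g,n}^D=\sum_{\mu}D_{g,n}(\mu)\prod_i x_i^{-\mu_i}$ and $W_{g,n}^D=\partial_{x_1}\cdots\partial_{x_n}F_{g,n}^D\cdot dx_1\cdots dx_n=w_{g,n}\,dx_1\cdots dx_n$, the effect of $\mathcal{L}$ on the left side $\mu_1 D_{g,n}(\mu)$ is $-x_1\partial_{x_1}\partial_{x_2}\cdots\partial_{x_n}F_{g,n}^D=-x_1 w_{g,n}(x_1,\dots,x_n)$, which is exactly the left side of~(\ref{eq:LT of Dgn}).

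For the two quadratic terms on the right of~(\ref{eq:Dgn recursion}), the key point is that the inner convolution $\sum_{\a+\b=\mu_1-2}\a\b(\cdots)$, once multiplied by $x_1^{-\mu_1}$ and summed over $\mu_1\ge1$, factorizes as $x_1^{-2}\bigl(\sum_{\a\ge1}\a(\cdots)x_1^{-\a}\bigr)\bigl(\sum_{\b\ge1}\b(\cdots)x_1^{-\b}\bigr)$, and the prefactor $x_1^{-2}$ is precisely cancelled by the two extra powers of $1/x_1$ carried by the differentiated generating functions. Carrying out the remaining sums and applying $\partial_{x_2}\cdots\partial_{x_n}$, the $D_{g-1,n+1}$ term becomes $w_{g-1,n+1}(x_1,x_1,x_2,\dots,x_n)$ (identifying the first two arguments corresponds to summing both $\a$ and $\b$ against $x_1$), and the genus/index‑splitting term becomes $\sum_{g_1+g_2=g,\;I\sqcup J=\{2,\dots,n\}}w_{g_1,|I|+1}(x_1,x_I)\,w_{g_2,|J|+1}(x_1,x_J)$, using the symmetry of $D_{g,n-1}$ (and of its smaller analogues) to place the loop‑argument in the $x_1$‑slot.

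The linear ``merging'' term is the crux. Fix $j\ge2$, set $\nu=\mu_1+\mu_j-2$, and sum over $\mu_1,\mu_j\ge1$ with $\nu$ held fixed; pulling $\partial_{x_j}$ out in front, everything reduces to the elementary finite‑difference identity
\begin{equation*}
\sum_{\substack{\mu_1,\mu_j\ge1\\ \mu_1+\mu_j=\nu+2}}\frac{1}{x_1^{\mu_1}x_j^{\mu_j}}
=\frac{1}{x_j-x_1}\left(\frac{1}{x_1^{\nu+1}}-\frac{1}{x_j^{\nu+1}}\right),
\end{equation*}
which follows by summing the geometric series in $x_j/x_1$ (equivalently, by factoring $x_1^{-(\nu+1)}-x_j^{-(\nu+1)}$). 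Packaging $D_{g,n-1}(\nu,\mu_{[n]\setminus\{1,j\}})$ together with the derivatives $\partial_{x_i}$, $i\ne1,j$, into a sequence $c_\nu$ (depending on the remaining variables $x_i$, $i\ne1,j$), one recognizes $\sum_\nu\nu c_\nu x_1^{-\nu-1}$ and $\sum_\nu\nu c_\nu x_j^{-\nu-1}$, again by symmetry of $D_{g,n-1}$, as $-w_{g,n-1}(x_1,x_2,\dots,\widehat{x_j},\dots,x_n)$ and $-w_{g,n-1}(x_2,\dots,x_n)$, so that the outer $\partial_{x_j}$ reproduces the $j$‑th summand of the first line of~(\ref{eq:LT of Dgn}). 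The $\nu=0$ term of the sum drops because $D_{g,n-1}(0,\dots)=0$, and since the difference of the two $w$'s vanishes on $x_j=x_1$ the quotient by $x_j-x_1$ introduces no spurious pole.

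I expect the main obstacle to be bookkeeping rather than ideas: tracking the sign factors $(-1)^{\bullet}$ produced by the derivatives, the exponent shifts caused by the ``$-2$'' in $\mu_1+\mu_j-2$ and in $\a+\b=\mu_1-2$, and — most delicately — the unstable contributions $(g,n)=(0,1)$ and $(0,2)$, which genuinely occur on the right of~(\ref{eq:Dgn recursion}); there the constant/$\log x$ regularization built into $F_{0,1}^D$ must be reconciled so that the $w_{0,1}$‑factors appear with the correct normalization. Once these conventions are pinned down, identity~(\ref{eq:LT of Dgn}) follows term by term.
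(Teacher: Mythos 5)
Your strategy coincides with the paper's: apply the weighted sum over $\mu$ against $\prod_i x_i^{-\mu_i}$ together with the derivatives $\partial_{x_2}\cdots\partial_{x_n}$ to both sides of (\ref{eq:Dgn recursion}), factor the convolution over $\a+\b=\mu_1-2$ in the quadratic terms, and resum the joining term over $\mu_1+\mu_j=\nu+2$. Your finite geometric-sum identity is a correct (and slightly cleaner) repackaging of the computation in the appendix, and the signs, the power count $x_1^{-\a-\b-2}$, and the identification of all the stable terms check out.

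The one point you defer, however, is a genuine gap rather than a matter of convention: the literal term-by-term transform of (\ref{eq:Dgn recursion}) does \emph{not} yield (\ref{eq:LT of Dgn}) with the $w_{0,1}$ used in the paper. The $\a$-sum in the $(g_1,I)=(0,\emptyset)$ summand produces $\sum_{m\ge 1}C_m x_1^{-2m-1}=z(x_1)-x_1^{-1}$, that is $-w_{0,1}(x_1)-x_1^{-1}$ rather than $-w_{0,1}(x_1)$, because the factor $\a\b$ in (\ref{eq:Dgn recursion}) annihilates the boundary cases $\a=0$ and $\b=0$ of the loop-removal bijection (an isolated vertex split off from vertex $1$), and those cases do occur. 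Indeed (\ref{eq:Dgn recursion}) as literally stated already fails for $(g,n)=(1,1)$, $\mu_1=6$: the left side is $6D_{1,1}(6)=10$, while the right side is $3D_{0,2}(1,3)+3D_{0,2}(3,1)+4D_{0,2}(2,2)=3+3+2=8$ (the $D_{0,1}D_{1,1}$ terms all vanish); the two missing arrowed dessins are exactly those whose marked loop bounds an empty arc. The $-\log x$ regularization built into $F_{0,1}^D$ in (\ref{eq:F01D}) --- equivalently the $m=0$ Catalan term $C_0=1$ in $w_{0,1}=-z(x)$ --- contributes precisely the extra $-x_1^{-1}$ that restores these boundary contributions $\b\,D_{g,n}(\b,\mu_{[n]\setminus\{1\}})$, and one can confirm on $(g,n)=(1,1)$ that $(z-1/z)\,w_{1,1}=w_{0,2}(x_1,x_1)$ holds with this convention and fails without it. So to complete the proof you must either restore the $\a=0$, $\b=0$ terms to (\ref{eq:Dgn recursion}) with weight $\b$ (resp.\ $\a$) in place of $0$, or show directly that taking $w_{0,1}=-z(x)$ absorbs exactly the resulting discrepancy $2x_1^{-1}w_{g,n}(x_1,\dots,x_n)$. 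As written, your argument establishes (\ref{eq:LT of Dgn}) only with $w_{0,1}$ replaced by $-z(x)+x_1^{-1}$, which is a different, and in fact false, statement. (The paper's own proof is equally silent on this point, so you have correctly located the only real subtlety --- but it does need to be closed, not just flagged.)
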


\begin{proof}
The operation we wish to do  is to apply
 $$
 (-1)^n
 \sum_{\mu_1,\dots,\mu_n>0} \mu_2\cdots\mu_n
 \prod_{i=1}^n \frac{1}{x_i^{\mu_i+1}}
 $$
 to  each side of  (\ref{eq:Dgn recursion}).
Then by (\ref{eq:WgnD}), the left-hand side
becomes $w_{g,n}(x_1,\dots,x_n)$.

The second line of  (\ref{eq:Dgn recursion}) is
straightforward. Let us just consider the first term,
since the computation of  the second term is the same.
\begin{multline*}
 (-1)^n
 \sum_{\mu_1,\dots,\mu_n>0} \mu_2\cdots\mu_n
 \prod_{i=1}^n \frac{1}{x_i^{\mu_i+1}}
 \sum_{\a+\b=\mu_1-2}
 \a\b D_{g-1,n+1}(\a,\b,\mu_2,\dots,\mu_n)
 \\
 =
 -\frac{1}{x_1}
  (-1)^{n+1}
 \sum_{\mu_2,\dots,\mu_n>0}\sum_{\a,\b>0}
 \a\b\mu_2\cdots\mu_n
 D_{g-1,n+1}(\a,\b,\mu_2\dots,\mu_n)
 \frac{1}{x_1^{\a+1}}\cdot
\frac{1}{x_1^{\b+1}}
\prod_{i=2}^n\frac{1}{x_i^{\mu_i+1}}
\\
=
 -\frac{1}{x_1}
 w_{g-1,n+1}(x_1,x_1,x_2,\dots,x_n).
\end{multline*}
Thus the second line of (\ref{eq:Dgn recursion})
produces
$$
-\frac{1}{x_1}
\Bigg(
 w_{g-1,n+1}(x_1,x_1,x_2,\dots,x_n)
 +
 \sum_{\substack{g_1+g_2=g\\
 I\sqcup J=\{2,\dots,n\}}}
 w_{g_1,|I|+1}(x_1,x_I)
  w_{g_2,|J|+1}(x_1,x_J)
  \Bigg).
$$

To calculate the operation on
the first line of (\ref{eq:Dgn recursion}),
let us fix $j>1$ and
 set $\nu=\mu_1+\mu_j-2\ge 0$. Then 
\begin{multline}
\label{eq:LT1}
(-1)^n
 \sum_{\mu_1,\dots,\mu_n>0} \mu_2\cdots\mu_n
 (\mu_1+\mu_j-2)
 \\
 \times
 D_{g,n-1}(\mu_1+\mu_j-2,\mu_2,\dots,
 \widehat{\mu_j},\dots,\mu_n)
 \prod_{i=1}^n \frac{1}{x_i^{\mu_i+1}}
 \\
 =
 -\sum_{\nu=0}^\infty
 \sum_{\mu_2,\dots,
 \widehat{\mu_j},\dots,\mu_n>0}
 (-1)^{n-1}\nu\mu_2\cdots\widehat{\mu_j}\cdots\mu_n
 \\
 \times
 D_{g,n-1}(\nu,\mu_2,\dots,\widehat{\mu_j},
 \dots,\mu_n) 
 \frac{1}{x_1^{\nu+1}}\prod_{i\ne 1,j}
  \frac{1}{x_i^{\mu_i+1}}
   \sum_{\mu_j=1}^{\nu+1}
 \mu_j x_1^{\mu_j-2}\frac{1}{x_j^{\mu_j+1}}.
 \end{multline}
Assuming $|x_1|<|x_j|$, we calculate
\begin{multline}
\label{eq:muj sum}
 \sum_{\mu_j=1}^{\nu+1}
 \mu_j x_1^{\mu_j-2}\frac{1}{x_j^{\mu_j+1}}
 =
 -\frac{1}{x_1^2}\frac{\partial}{\partial x_j}
 \sum_{\mu_j=0}^{\nu+1}
 \left(\frac{x_1}{x_j}\right)^{\mu_j}
 =
  -\frac{1}{x_1^2}\frac{\partial}{\partial x_j}
  \left(
  \frac{1}{1-\frac{x_1}{x_j}}
  -\frac{\left(\frac{x_1}{x_j}\right)^{\nu+2}}
  {1-\frac{x_1}{x_j}}
  \right)
  \\
  =
   -\frac{1}{x_1^2}\frac{\partial}{\partial x_j}
   \left(
    \frac{1}{1-\frac{x_1}{x_j}}
    \right)
    +
    x_1^\nu\frac{\partial}{\partial x_j}
    \left(
    \frac{1}{x_j-x_1}\;\frac{1}{x_j^{\nu+1}}
    \right).
\end{multline}
We then substitute (\ref{eq:muj sum}) in (\ref{eq:LT1})
and obtain
\begin{multline}
\text{(A.2)} =
w_{g,n-1}(x_1,x_2,\dots,\widehat{x_j},\dots,x_n)
\frac{1}{x_1^2}\frac{\partial}{\partial x_j}
   \left(
    \frac{1}{1-\frac{x_1}{x_j}}
    \right)
    \\
    -\frac{1}{x_1}
    \frac{\partial}{\partial x_j}
    \left(
    \frac{1}{x_j-x_1}\;w_{g,n-1}(x_2,\dots,x_j,\dots,x_n)
    \right)
    \\
    =
    -\frac{1}{x_1} \frac{\partial}{\partial x_j}
     \left(
    \frac{1}{x_j-x_1}
    \left(w_{g,n-1}(x_2,\dots,x_j,\dots,x_n)
    -
    w_{g,n-1}(x_1,x_2,\dots,\widehat{x_j},\dots,x_n)
    \right)
    \right).
\end{multline}
This completes the proof.
\end{proof}

\begin{proof}[Proof of Theorem~\ref{thm:LTofD}]
When the curve is split into two pieces,
the second term of the third line of 
(\ref{eq:LT of Dgn}) contains contributions from
unstable geometries $(g,n)=(0,1)$ and $(0,2)$. 
We first separate them out. 
For $g_1=0$ and $I=\emptyset$, or $g_2=0$ and
$J=\emptyset$,  we have
a contribution of 
$$
2 w_{0,1}(x_1)w_{g,n}(x_1,x_2,\dots,x_n).
$$
Similarly, for $g_1=0$ and $I=\{j\}$, 
or $g_2=0$ and $J=\{j\}$, 
we have
$$
2\sum_{j=2}^n w_{0,2}(x_1,x_j)
w_{g,n-1}(x_1,\dots,\widehat{x_j},
\dots,x_n).
$$
Since $W_{0,1}^D$ and $W_{0,2}^D$ are
defined on the spectral curve, it is time for us
to switch to the preferred coordinate $t$ of
(\ref{eq:z(t)}) now.
We thus introduce
\begin{equation}
\label{eq:wgnD}
W_{g,n}^D(t_1,\dots,t_n)=w_{g,n}^D(t_1,\dots,t_n)\;
dt_1\cdots dt_n
=
w_{g,n}(x_1,\dots,x_n)\;dx_1\cdots dx_n.
\end{equation}
Since $w_{0,1}(x)=-z(x)$, we have
\begin{align*}
w_{0,1}(x)&=-\frac{t+1}{t-1}\\
w_{0,2}(x_1,x_2)&=
\frac{1}{(t_1+t_2)^2}\;\frac{(t_1^2-1)^2}{8t_1}
\;\frac{(t_2^2-1)^2}{8t_2}\\
w_{g,n}(x_1,\dots,x_n)&=
(-1)^n w_{g,n}^D(t_1,\dots,t_n)\prod_{i=1}^n
\frac{(t_i^2-1)^2}{8t_i}.
\end{align*}
Thus (\ref{eq:LT of Dgn}) is equivalent to 
\begin{multline*}
 2\left(\frac{t_1^2+1}{t_1^2-1}-\frac{t_1+1}{t_1-1}
 \right)
w_{g,n}^D(t_1,\dots,t_n)
\\
=
\sum_{j=2}^n
\Bigg(
\frac{(t_1^2-1)^2(t_j^2-1)^2}{16 (t_1^2-t_j^2)^2}\;
    \frac{8t_j}{(t_j^2-1)^2}\;
    w_{g,n-1}^D(t_1,\dots,\widehat{t_j},\dots,t_n)
\\
    +
    \frac{\partial}{\partial t_j}
\left(
    \frac{(t_1^2-1)(t_j^2-1)}{4 (t_1^2-t_j^2)}
     \frac{8t_1}{(t_1^2-1)^2}
     \frac{(t_j^2-1)^2}{8t_j}
         w_{g,n-1}^D(t_2,\dots,t_n)
         \right)
     \Bigg)
\\
+
\frac{(t_1^2-1)^2}{8t_1}
\left(
w_{g-1,n+1}^D(t_1,t_1,t_2,\dots,t_n)
+\sum_{\substack{g_1+g_2=g\\
I\sqcup J=\{2,\dots,n\}}} ^{\text{stable}}
w_{g_1,|I|+1}^D(t_1,t_I)
w_{g_2,|J|+1}^D(t_1,t_J)
\right)
\\
+
2\sum_{j=2}^n
\frac{1}{(t_1+t_j)^2}\;\frac{(t_1^2-1)^2}{8t_1}
w_{g,n-1}^D(t_1,\dots,\widehat{t_j},\dots,t_n)
\\
=
\sum_{j=2}^n
\Bigg(
\left(
\frac{ t_j(t_1^2-1)^2}{2 (t_1^2-t_j^2)^2}
     +  
\frac{1}{(t_1+t_j)^2}\;\frac{(t_1^2-1)^2}{4t_1}
\right)
w_{g,n-1}^D(t_1,\dots,\widehat{t_j},\dots,t_n)
\\
    +
    \frac{t_1}{t_1^2-1}\;
    \frac{\partial}{\partial t_j}
\left(
    \frac{(t_j^2-1)^3}{4t_j (t_1^2-t_j^2)}\;
         w_{g,n-1}^D(t_2,\dots,t_n)
         \right)
     \Bigg)
     \\
+
\frac{(t_1^2-1)^2}{8t_1}
\left(
w_{g-1,n+1}^D(t_1,t_1,t_2,\dots,t_n)
+\sum_{\substack{g_1+g_2=g\\
I\sqcup J=\{2,\dots,n\}}} ^{\text{stable}}
w_{g_1,|I|+1}^D(t_1,t_I)
w_{g_2,|J|+1}^D(t_1,t_J)
\right).
\end{multline*}
Since
$$
 2\left(\frac{t_1^2+1}{t_1^2-1}-\frac{t_1+1}{t_1-1}
 \right)
=
-\frac{4t_1}{t_1^2-1},
$$
we obtain
\begin{multline}
\label{eq:D-diff}
w_{g,n}^D(t_1,\dots,t_n)
=
-\sum_{j=2}^n
\Bigg(
    \frac{\partial}{\partial t_j}
\left(
    \frac{(t_j^2-1)^3}{16t_j (t_1^2-t_j^2)}\;
         w_{g,n-1}^D(t_2,\dots,t_n)
         \right)
     \\
      +
  \frac{(t_1^2-1)^3}  {16 t_1^2}
  \;
  \frac{t_1^2+t_j^2}{(t_1^2-t_j^2)^2}
w_{g,n-1}^D(t_1,\dots,\widehat{t_j},\dots,t_n)
     \Bigg)
     \\
-
\frac{(t_1^2-1)^3}{32 t_1^2}
\left(
w_{g-1,n+1}^D(t_1,t_1,t_2,\dots,t_n)
+\sum_{\substack{g_1+g_2=g\\
I\sqcup J=\{2,\dots,n\}}} ^{\text{stable}}
w_{g_1,|I|+1}^D(t_1,t_I)
w_{g_2,|J|+1}^D(t_1,t_J)
\right).
\end{multline}
Now let us compute the integral  
\begin{multline}
\label{eq:DEO-A}
W_{g,n}^D(t_1,\dots,t_n)
=
-\frac{1}{64} \; 
\frac{1}{2\pi i}\int_\gam
\left(
\frac{1}{t+t_1}+\frac{1}{t-t_1}
\right)
\frac{(t^2-1)^3}{t^2}\cdot \frac{1}{dt}\cdot dt_1
\\
\times
\Bigg[
\sum_{j=2}^n
\bigg(
W_{0,2}^D(t,t_j)W_{g,n-1}(-t,t_2,\dots,\widehat{t_j},
\dots,t_n)
+
W_{0,2}^D(-t,t_j)W_{g,n-1}(t,t_2,\dots,\widehat{t_j},
\dots,t_n)
\bigg)
\\
+
W_{g-1,n+1}^D(t,{-t},t_2,\dots,t_n)
+
\sum^{\text{stable}} _
{\substack{g_1+g_2=g\\I\sqcup J=\{2,3,\dots,n\}}}
W_{g_1,|I|+1}^D(t,t_I) W_{g_2,|J|+1}^D({-t},t_J)
\Bigg].
\end{multline}
Recall that
For $2g-2+n>0$, $w_{g,n}^D(t_1,\dots,t_n)$
is a Laurent polynomial in $t_1^2,\dots,t_n^2$.
Thus the third line of (\ref{eq:DEO-A}) is
immediately calculated because
the integration contour
$\gam$ of Figure~\ref{fig:contourD} encloses
 $\pm t_1$ and 
 contributes residues
 with the negative sign.
 The result is 
 exactly the last line of (\ref{eq:D-diff}).
Similarly, since
\begin{multline*}
W_{0,2}^D(t,t_j)W_{g,n-1}(-t,t_2,\dots,\widehat{t_j},
\dots,t_n)
+
W_{0,2}^D(-t,t_j)W_{g,n-1}(t,t_2,\dots,\widehat{t_j},
\dots,t_n)
\\
=
-\left(
\frac{1}{(t+t_j)^2}+\frac{1}{(t-t_j)^2}
\right)
w_{g,n-1}^D(t,t_2,\dots,\widehat{t_j},\dots,t_n)
\;dt\;dt\;dt_2\cdots\widehat{dt_j}\cdots dt_n,
\end{multline*}
the residues at $\pm t_1$ contributes 
$$
-\frac{(t_1^2-1)^3(t_1^2+t_j^2)}
{16 t_1^2(t_1^2-t_j^2)^2}\;
w_{g,n-1}^D(t_1,\dots,\widehat{t_j},\dots,t_n).
$$
This is the same as the second line of the 
right-hand side of (\ref{eq:D-diff}).

Within the contour $\gam$, there are second order
poles at $\pm t_j$ for each $j\ge 2$ that come from
$W_{0,2}^D(\pm t,t_j)$. Note that 
$W_{0,2}^D(t,t_j)$ acts as the
Cauchy differentiation kernel. We calculate
\begin{multline*}
\frac{1}{64} \; 
\frac{1}{2\pi i}\int_\gam
\left(
\frac{1}{t+t_1}+\frac{1}{t-t_1}
\right)
\frac{(t^2-1)^3}{t^2}
\sum_{j=2}^n
\bigg(
w_{0,2}^D(t,t_j)w_{g,n-1}^D(-t,t_2,\dots,\widehat{t_j},
\dots,t_n)
\\
+
w_{0,2}^D(-t,t_j)w_{g,n-1}(t,t_2,\dots,\widehat{t_j},
\dots,t_n)
\bigg)
\\
=
-\frac{1}{32} 
\frac{\partial}{\partial t_j}
\left(
\left(
\frac{1}{t_j+t_1}+\frac{1}{t_j-t_1}
\right)
\frac{(t_j^2-1)^3}{t_j^2}
w_{g,n-1}^D(t_j,t_2,\dots,\widehat{t_j},
\dots,t_n)
\right)
\\
=
-\frac{1}{16}
\frac{\partial}{\partial t_j}
\left(
\frac{1}{t_j^2-t_1^2}\;
\frac{(t_j^2-1)^3}{t_j}
w_{g,n-1}^D(t_j,t_2,\dots,\widehat{t_j},
\dots,t_n)
\right).
\end{multline*}
This gives the first line of the right-hand side
of (\ref{eq:D-diff}).
We have thus
 completes the proof of Theorem~\ref{thm:LTofD}.
\end{proof}

\end{appendix}

\begin{ack}
The authors thank 
G.~Borot, V.~Bouchard, A.~Brini, 
K.~Chapman, B.~Eynard,
D.~Hern\'andez Serrano, 
G.~Gliner, M.~Mari\~no,
P.~Norbury, R.~Ohkawa,
M.~Penkava, G.~Shabat,
S.~Shadrin, R.~Vakil, and D.~Zagier
for stimulating and useful discussions.
During the preparation of this paper, the
authors received support from   NSF
grants DMS-0905981, DMS-1104734 and
 DMS-1104751, and from the Banff International 
Research Station. 
The research of M.M.\ was also 
partially supported by the University of 
Geneva, the University
of Grenoble,   the University of Salamanca,
 the University of Amsterdam, and
 the Max-Planck Institute for  Mathematics
 in Bonn. B.S.\ also received a special research
 support from the Central Michigan University.
 \end{ack}


\providecommand{\bysame}{\leavevmode\hbox to3em{\hrulefill}\thinspace}

\bibliographystyle{amsplain}

\begin{thebibliography}{10}


\bibitem{AMM}
A.~Alexandrov, A.~Mironov and A.~Morozov, 
\emph{Unified description of correlators
in non-Gaussian phases of Hermitean matrix 
model},
arXiv:hep-th/0412099 (2004).

\bibitem{Ballard}
M.~R.~Ballard,
\emph{Meet homological mirror symmetry},
in ``Modular forms and string duality,'' 
Fields Inst.\ Commun., \textbf{54}, 191--224
 (2008).



\bibitem{Belyi}
G.~V.~Belyi, \emph{On galois extensions of a maximal cyclotomic fields}, Math.\
  U.S.S.R.\ Izvestija \textbf{14}, 247--256  (1980).





\bibitem{BEMS} 
G.~Borot, B.~Eynard,
M.~Mulase and B.~Safnuk, 
\emph{Hurwitz numbers, matrix models and 
topological recursion}, 	arXiv:0906.1206 [math.Ph] (2009).


\bibitem{BCMS}
V.~Bouchard, A.~Catuneanu, O.~Marchal
and P.~Su\l kowski,
\emph{The remodeling conjecture and the 
Faber-Pandharipande formula},
arXiv:1108.2689  (2011).


\bibitem{BKMP}
V.~Bouchard, A.~Klemm, M.~Mari\~no, and S.~Pasquetti,
\emph{Remodeling the {B}-model},
Commun.\ Math.\ Phys.
 \textbf{287}, 117--178 (2008).



\bibitem{BM}
V.~ Bouchard and M.~ Mari\~no, 
\emph{Hurwitz numbers, matrix models and enumerative geometry},
Proc.\ Symposia Pure Math.\ 
 \textbf{78}, 263--283 (2008).
 


\bibitem{Brini}
A.~Brini, 
\emph{The local Gromov-Witten theory of $\bC\bP^1$ and 
integrable hierarchies}, arXiv:1002.0582v1 [math-ph].



\bibitem{BEM}
A.~Brini, B.~Eynard, and M.~Mari\~no,
\emph{Torus knots and mirror symmetry}
arXiv:1105.2012.





\bibitem{CMS}
K.~Chapman, M.~Mulase, and B.~Safnuk,
\emph{
Topological recursion and the Kontsevich constants
for the volume of the moduli of curves}, Preprint (2010).



\bibitem{Chen}
L.~Chen,
\emph{Bouchard-Klemm-Marino-Pasquetti 
Conjecture for $\mathbb{C}^3$},
arXiv:0910.3739 (2009).




\bibitem{CGHJK}
R.M.~Corless, G.H.~Gonnet, D.E.G.~Hare,
D.J.~Jeffrey and D.E.~Knuth,
\emph{On the Lambert W-function},
Adv.\ Computational Math.\ \textbf{5},
329--359 (1996).



\bibitem{DFM}
R.~Dijkgraaf, H.~Fuji and M.~Manabe,
\emph{The volume conjecture, perturbative
knot invariants, and recursion relations for 
topological strings},
arXiv:1010.4542 [hep-th] (2010).

\bibitem{DV}
  R.~Dijkgraaf and C.~Vafa,
  \emph{Two Dimensional Kodaira-Spencer 
  Theory and Three Dimensional Chern-Simons
  Gravity},
  arXiv:0711.1932 [hep-th].




\bibitem{DVV}
R.~Dijkgraaf, E.~Verlinde, and H.~Verlinde,
\emph{Loop equations and {V}irasoro constraints in non-perturbative two-dimensional quantum gravity}, Nucl. Phys.
\textbf{B348}, 435--456 (1991).



\bibitem{D}
B.~Dubrovin, 
\emph{Geometry of 2D topological field theories}, 
in ``Integrable systems and quantum groups,'' 
Lecture Notes in Math.\ \textbf{1620}, 120--348 (1994). 

\bibitem{DZ}
B.~Dubrovin and Y.~Zhang,
\emph{Frobenius manifolds and Virasoro constraints},
Selecta Mathematica, New Ser.\  \textbf{5}, 423--466 (1999).

\bibitem{DZ2}
B.~Dubrovin and Y.~Zhang,
\emph{Virasoro Symmetries of the Extended Toda 
Hierarchy}, arXive:math/0308152 (2003).





\bibitem{DMMSS}
P.~Dunin-Barkowski, A.~Mironov, A.~Morozov, 
A.~Sleptsov, and A.~Smirnov.
\emph{Superpolynomials for toric knots from evolution induced by cut-and-join operators
}, arXiv:1106.4305 [hep-th].





\bibitem{ELSV} T.~Ekedahl, S.~Lando, M.~Shapiro, A.~Vainshtein,
{\em Hurwitz numbers and intersections on moduli spaces of curves},
Invent. Math. {\bf 146}, 297--327 (2001).







\bibitem{E2004} B.~Eynard, {\em Topological expansion for the
1-hermitian matrix model 
correlation functions}, arXiv:hep-th/0407261.


\bibitem{E2007} B.~Eynard, {\em Recursion between 
volumes of moduli spaces}, arXiv:0706.4403 [math-ph].



\bibitem{EMO}
  B.~Eynard, M.~Mari\~no and N.~Orantin,
  \emph{Holomorphic anomaly and matrix models},
  Journal of High Energy Physics {\bf 06} 058, (2007)
  [arXiv:hep-th/0702110].





  \bibitem{EMS} B.~Eynard, M.~Mulase
  and B.~Safnuk,
  \emph{The
{L}aplace transform  of the cut-and-join equation
and the {B}ouchard-{M}ari\~no conjecture on
{H}urwitz numbers}, arXiv:0907.5224 math.AG (2009).





\bibitem{EO1}
  B.~Eynard and N.~Orantin,
\emph{Invariants of algebraic curves and topological 
expansion},
Communications in Number Theory
and Physics {\bf 1},  347--452 (2007).


\bibitem{EO2} B.~Eynard and N.~Orantin, 
\emph{Weil-Petersson volume of moduli spaces, 
Mirzakhani's recursion and matrix models},
  arXiv:0705.3600 [math-ph] (2007).
  
  \bibitem{EO3}
  B.~Eynard and N.~Orantin,
\emph{Algebraic methods in random
matrices and enumerative geometry},
 arXiv:0811.3531 [math-ph] (2008).

 
 \bibitem{Getzler}
 E.~Getzler,
 \emph{Topological recursion relations in
 genus $2$}, arXiv:math/9801003 (1998).




  \bibitem{GJ}
  I.P.~Goulden and D.M.~Jackson,
  {\em Transitive factorisations into transpositions and
  holomorphic mappings on the sphere},
  Proc. A.M.S., \textbf{125}, 51--60 (1997).


 \bibitem{GJV1} I.P.~Goulden, D.M.~Jackson and A.~Vainshtein,
{\em The number of ramified coverings of the sphere by the torus and surfaces of higher genera},
Ann. of Comb. {\textbf{4}}, 27--46 (2000).

 \bibitem{GJV2} I.P.~Goulden, D.M.~Jackson and R.~Vakil, 
{\em The Gromov-Witten potential of a point, Hurwitz numbers,
and Hodge integrals},
Proc.\  London Math.\ Soc.\  \textbf{83:3}, 563--581 (2001).

 \bibitem{GJV3} I.P.~Goulden, D.M.~Jackson and R.~Vakil, 
{\em A short proof of the $\lambda_g$-conjecture without Gromov-Witten theory: Hurwitz theory and the moduli of curves},
arXiv:math/0604297v1 [math.AG] (2006).

\bibitem{GV1} T.~Graber and R.~Vakil,
{\em Hodge integrals and Hurwitz numbers via virtual localization},
Compositio Math. {\bf 135}, 25--36 (2003).

\bibitem{GV2} T.~Graber and R.~Vakil,
{\em Relative virtual localization and vanishing of tautological classes on moduli spaces of curves},
Duke Math.\ J.\ \textbf{130}, 1--37 (2005).



\bibitem{GS} S.~Gukov and P.~Su\l kowski,
\emph{A-polynomial, B-model, and quantization},
arXiv:1108.0002v1 [hep-th].


 
 

\bibitem{Harer}
J.~L. Harer, \emph{The cohomology of the moduli space of curves},
in Theory of
  Moduli, Montecatini Terme, 1985 (Edoardo Sernesi, ed.), Springer-Verlag,
  1988, pp.~138--221.

\bibitem{HZ}
J.~L. Harer and D.~Zagier, \emph{The {Euler} characteristic of the moduli
  space of curves}, Inventiones Mathematicae \textbf{85}, 457--485
   (1986).








\bibitem{Hirzebruch}
F.~Hirzebruch, 
\emph{Topological methods in algebraic geometry},
Third Edition, Springer -Verlag, 232 pages, 1966.


\bibitem{Hitchin}
N.~Hitchin,
\emph{Lectures on Special Lagrangian Submanifolds}, 
math.DG/9907034 (1999).

\bibitem{Mirror}
K.~Hori, S.~Katz,
A.~Klemm, R.~Pandharipande,
R.~Thomas, C.~Vafa,
R.~Vakil, and E.~Zaslow,
\emph{Mirror symmetry},
Clay Mathematics Monograph \textbf{1}, 
929 pages, American Mathematical Society, 2003.

\bibitem{H}
A.~Hurwitz,
\emph{\"Uber Riemann'sche Fl\"achen mit gegebene
Verzweigungspunkten},
Mathematische Annalen \textbf{39}, 1--66 (1891).




\bibitem{Joyce}
D.~Joyce,
\emph{Lectures on special Lagrangian geometry},
math.DG/0111111 (2001).


\bibitem{KMZ}
R.~Kaufmann, Yu.~Manin, and D.~Zagier,
\emph{Higher Weil-Petersson volume of moduli
space of stable $n$-pointed curves},
ArXiv:alg-geom/9604001 (1996).



\bibitem{Kodama}
Y.~Kodama, 
\emph{Combinatorics of
the dispersionless Toda hierarchy},
Lecture delivered at the International Workshop on
Nonlinear and Modern Mathematical Physics,
Beijing Xiedao Group, Beijing, China, July 15--21, 2009.


\bibitem{KP}
Y.~Kodama and V.U.~Pierce,
\emph{Combinatorics of dispersionless 
integrable systems and universality in random 
matrix theory},
arXiv:0811.0351 (2008).


\bibitem{K1992}
M.~Kontsevich,
\emph{Intersection theory on
the moduli space of
curves and the
matrix {Airy} function},
  Communications in Mathematical Physics
 \textbf{147}, 1--23  (1992).
 
 
 \bibitem{K1994}
M.~Kontsevich,
\emph{Homological algebra of mirror symmetry},
 arXiv:alg-geom/9411018  (1994).


\bibitem{Liu}C.-C.~M.~Liu,
\emph{Lectures on the ELSV formula},
arXiv:1004.0853. In ``Transformation Groups 
and Moduli Spaces of Curves,''
  Adv. Lect. Math. (ALM) \textbf{16}, 
  195--216,
  Higher Education Press and International Press, 
  Beijing-Boston, 2010.

\bibitem{LLZ} C.-C.~M.~Liu, K.~Liu, J.~Zhou,
\emph{A proof of a conjecture of Mari\~no-Vafa on 
Hodge Integrals},
J. Differential Geom.  \textbf{65},  no. 2,  
289--340 (2003). 




 
\bibitem{M1} M.~Mari\~no, {\em Chern-Simons theory, matrix models, and topological strings}, Oxford University Press, 2005.

\bibitem{M2}
  M.~Mari\~no,
  \emph{Open string amplitudes and large order behavior in topological string
  theory},
  JHEP {\bf 0803}, 060 (2008).



\bibitem{MirMor}
A.~Mironov and A.~Morozov,
\emph{Virasoro constraints for Kontsevich-Hurwitz partition function},
arXiv:0807.2843 [hep-th] (2008).




\bibitem{M1995}
M.~Mulase,
\emph{Asymptotic analysis of a Hermitian matrix integral}, International Journal of Mathematics \textbf{6}, 881--892  (1995).




 \bibitem{MP1998}
 M.~Mulase and M.~Penkava, \emph{
 Ribbon graphs, quadratic differentials on Riemann surfaces, and algebraic curves defined over $\overline{\mathbb{Q}}$}, The Asian Journal of Mathematics  \textbf{2} (4), 875--920 (1998).


   \bibitem{MP2010}
 M.~Mulase and M.~Penkava, 
 \emph{Topological recursion for the Poincar\'e 
 polynomial of the combinatorial moduli space of curves}, 
 Preprint arXiv:1009.2135 math.AG (2010).
 


\bibitem{MS} M. Mulase and B. Safnuk, {\em Mirzakhani's recursion relations, Virasoro constraints and the KdV hierarchy},
Indian J. Math. \textbf{50}, 189--228 (2008).




\bibitem{MZ} M.~Mulase and N.~Zhang,
\emph{
Polynomial recursion formula for linear Hodge integrals},  Communications in Number Theory
and Physics {\bf 4}, (2010).

  \bibitem{Mumford} D.~Mumford,
  \emph{
  Towards an enumerative geometry of the moduli space of curves}
  (1983),
  in ``Selected Papers of David Mumford,'' 235--292 (2004).



  \bibitem{N1} P.~Norbury,
  \emph{Counting lattice points in the moduli space of curves},
  arXiv:0801.4590  (2008).


  \bibitem{N2} P.~Norbury,
  \emph{String and dilaton equations for counting lattice points in the moduli space of curves},
  arXiv:0905.4141  (2009).


\bibitem{NS1}
P.~Norbury and N.~Scott,
\emph{Polynomials representing Eynard-Orantin
invariants},
arXiv:1001.0449 (2010).


\bibitem{NS2}
P.~Norbury and N.~Scott,
\emph{Gromov-Witten invariants of $\bP^1$ 
and Eynard-Orantin invariants},
arXiv:1106.1337 (2011).

\bibitem{O1} A.~Okounkov, {\em Random matrices and random perputations}, 
International Mathematics Research Notices \textbf{2000},
1043--1095  (2000).




\bibitem{O2} A.~Okounkov, {\em Toda equations for Hurwitz numbers}, Math. Res. Lett. {\bf 7}, 447 (2000) 
[arXiv:math.AG/0004128].





\bibitem{OP1}
A.~Okounkov and R.~Pandharipande,
\emph{Gromov-Witten theory, Hurwitz numbers, and matrix 
models, I}, 
Proc.\ Symposia Pure Math.\ 
 \textbf{80}, 325--414 (2009).
 
 \bibitem{OP2}
A.~Okounkov and R.~Pandharipande,
\emph{Gromov-Witten theory, Hurwitz theory, 
and completed cycles}, 
Ann.\ Math.\ 
 \textbf{163}, 517--560 (2006).



\bibitem{OP3}
A.~Okounkov and R.~Pandharipande,
\emph{The equivariant Gromov-Witten theory of 
$\mathbb{P}^1$}, 
arXiv:math/0207233 [math.AG] (2002).

\bibitem{OP4}
A.~Okounkov and R.~Pandharipande,
\emph{Virasoro constraints for target curves}, 
arXiv:math/0308097 [math.AG] (2003).


\bibitem{OP5}
A.~Okounkov and R.~Pandharipande,
\emph{Hodge integrals and invariants of the unknot}, 
Geom.\ Topol.\ \textbf{8}, 675--699  (2004).




\bibitem{OSY}
H.~Ooguri, P.~Sulkowski, M.~Yamazaki,
\emph{Wall Crossing As Seen By Matrix Models},
arXiv:1005.1293 (2010).



\bibitem{Schneps}
L.~Schneps, Editor,
\emph{The Grothendieck theory of 
dessins d'enfants},
London Mathematical Society Lecture
Notes Series \textbf{200}, 368 pages, 1994.





\bibitem{SL}
L.~Schneps and P.~Lochak, Editors,
\emph{
Geometric Galois actions} \textbf{1},
London Mathematical Society Lecture
Notes Series \textbf{242}, 293 pages,1997.


\bibitem{STT}
D.~D. Sleator, R.~E. Tarjan, and W.~P. Thurston, \emph{Rotation
  distance, triangulations, and hyperbolic geometry}, Journal of the American
  Mathematical Society \textbf{1}, 647--681 (1988).


\bibitem{Stanley}
R.~P.~Stanley, 
\emph{Enumerative combinatorics}
volume 2, Cambridge University Press, 2001.


\bibitem{Strebel}
K.~ Strebel, \emph{Quadratic differentials}, Springer-Verlag, 1984.




\bibitem{tH}
G.~'t Hooft,
\emph{
A planer diagram theory for
strong interactions},
Nuclear Physics \textbf{B
72},
461--473 (1974).



\bibitem{V} R.~Vakil, Harvard Thesis 1997.




\bibitem{W1991}
E.~Witten, \emph{Two dimensional gravity and
intersection
theory on moduli space}, Surveys in
Differential Geometry \textbf{1},
243--310 (1991).


\bibitem{Zhou1} J.~Zhou,
{\em Hodge integrals, Hurwitz numbers, and symmetric groups}, preprint, arXiv:math/0308024 [math.AG] (2003).

\bibitem{Zhou2}
J.~Zhou,
\emph{On computations of Hurwitz-Hodge integrals},
arXiv:0710.1679  (2007).


\bibitem{Zhou3}
J.~Zhou,
\emph{Local Mirror Symmetry for One-Legged 
Topological Vertex},
arXiv:0910.4320  (2009).

\bibitem{Zhou4}
J.~Zhou,
\emph{Local Mirror Symmetry for the Topological 
Vertex}
arXiv:0911.2343   (2009).

\bibitem{Zhu}
S.~Zhu,
\emph{On a proof of the Bouchard-Su\l kowski 
conjecture},
arXiv:1108.2831 (2011).

\bibitem{Zvonkine}
D.~Zvonkine,
\emph{An algebra of power series 
arising in the intersection theory of moduli spaces 
of curves and in the enumeration of ramified 
coverings of the sphere},
arXiv:math.AG/0403092 (2004).


\end{thebibliography}

\end{document}